\newcommand{\noun}[1]{\textsc{#1}}
\DeclareRobustCommand{\greektext}{%
  \fontencoding{LGR}\selectfont\def\encodingdefault{LGR}}
\DeclareRobustCommand{\textgreek}[1]{\leavevmode{\greektext #1}}
\numberwithin{equation}{section}
\numberwithin{figure}{section}
\theoremstyle{plain}
\newtheorem{thm}{\protect\theoremname}
\theoremstyle{definition}
\newtheorem{example}[thm]{\protect\examplename}
\theoremstyle{definition}
\newtheorem{defn}[thm]{\protect\definitionname}
\theoremstyle{remark}
\newtheorem{rem}[thm]{\protect\remarkname}
\theoremstyle{plain}
\newtheorem{prop}[thm]{\protect\propositionname}
\theoremstyle{plain}
\newtheorem{lem}[thm]{\protect\lemmaname}
\theoremstyle{plain}
\newtheorem{cor}[thm]{\protect\corollaryname}
\renewcommand*{\epsilon}{\varepsilon}
\tikzset{node distance=2.5cm, auto}
\newcommand{\myar}[2]{\ar^-{#1}[#2]}
\newcommand{\myard}[2]{\ar_-{#1}[#2]}
\def\matrixobject@{%
  \edef \next@{={\DirectionfromtheDirection@ }}%
  \expandafter \toks@ \next@ \plainxy@
  \let\xy@@ix@=\xyq@@toksix@
  \xyFN@ \OBJECT@}
\let\xy@entry@@norm=\entry@@norm
\def\entry@@norm@patched{%
  \let\object@=\matrixobject@
  \xy@entry@@norm }
\newcommand{\twocong}[2][0.5]{\ar@{}[#2] \save ?(#1)*{\cong}\restore}
\newcommand{\twoeq}[2][0.5]{\ar@{}[#2] \save ?(#1)*{=}\restore}
\newcommand{\ltwocell}[3][0.5]{\ar@{}[#2] \ar@{=>}?(#1)+/r 0.15cm/;?(#1)+/l 0.15cm/^{#3}}
\newcommand{\rtwocell}[3][0.5]{\ar@{}[#2] \ar@{=>}?(#1)+/l 0.15cm/;?(#1)+/r 0.15cm/^{#3}}
\newcommand{\utwocell}[3][0.5]{\ar@{}[#2] \ar@{=>}?(#1)+/d  0.15cm/;?(#1)+/u 0.15cm/_{#3}}
\newcommand{\dtwocell}[3][0.5]{\ar@{}[#2] \ar@{=>}?(#1)+/u  0.15cm/;?(#1)+/d 0.15cm/^{#3}}
\newcommand{\ultwocell}[3][0.5]{\ar@{}[#2] \ar@{=>}?(#1)+/dr  0.15cm/;?(#1)+/ul 0.15cm/^{#3}}
\newcommand{\urtwocell}[3][0.5]{\ar@{}[#2] \ar@{=>}?(#1)+/dl  0.15cm/;?(#1)+/ur 0.15cm/^{#3}}
\newcommand{\dltwocell}[3][0.5]{\ar@{}[#2] \ar@{=>}?(#1)+/ur  0.15cm/;?(#1)+/dl 0.15cm/^{#3}}
\newcommand{\drtwocell}[3][0.5]{\ar@{}[#2] \ar@{=>}?(#1)+/ul  0.15cm/;?(#1)+/dr 0.15cm/^{#3}}
\renewcommand{\tocsection}[3]{%
  \indentlabel{\@ifnotempty{#2}{\bfseries\ignorespaces#1 #2\quad}}\bfseries#3}
\renewcommand{\tocsubsection}[3]{%
  \indentlabel{\@ifnotempty{#2}{\ignorespaces#1 #2\quad}}#3}
\newcommand\@dotsep{4.5}
\def\@tocline#1#2#3#4#5#6#7{\relax
  \ifnum #1>\c@tocdepth 
  \else
    \par \addpenalty\@secpenalty\addvspace{#2}%
    \begingroup \hyphenpenalty\@M
    \@ifempty{#4}{%
      \@tempdima\csname r@tocindent\number#1\endcsname\relax
    }{%
      \@tempdima#4\relax
    }%
    \parindent\z@ \leftskip#3\relax \advance\leftskip\@tempdima\relax
    \rightskip\@pnumwidth plus1em \parfillskip-\@pnumwidth
    #5\leavevmode\hskip-\@tempdima{#6}\nobreak
    \leaders\hbox{$\m@th\mkern \@dotsep mu\hbox{.}\mkern \@dotsep mu$}\hfill
    \nobreak
    \hbox to\@pnumwidth{\@tocpagenum{\ifnum#1=1\bfseries\fi#7}}\par
    \nobreak
    \endgroup
  \fi}
\renewcommand\csname r@tocindent0\endcsname{0pt}
\def\l@subsection{\@tocline{2}{0pt}{2.5pc}{5pc}{}}
\def\l@section{\@tocline{2}{3pt}{2.5pc}{5pc}{}}
\numberwithin{thm}{section}
\providecommand{\corollaryname}{Corollary}
\providecommand{\definitionname}{Definition}
\providecommand{\examplename}{Example}
\providecommand{\lemmaname}{Lemma}
\providecommand{\propositionname}{Proposition}
\providecommand{\remarkname}{Remark}
\providecommand{\theoremname}{Theorem}
\begin{document}
\subjclass[2020]{18N10,18A32,18A30}
\title{Lax Familial Representability and Lax Generic Factorizations}
\begin{abstract}
A classical result due to Diers shows that a copresheaf $F\colon\mathcal{A}\to\mathbf{Set}$
on a category $\mathcal{A}$ is a coproduct of representables precisely
when each connected component of $F$'s category of elements has an
initial object. Most often, this condition is imposed on a copresheaf
of the form $\mathcal{B}\left(X,T-\right)$ for a functor $T\colon\mathcal{A}\to\mathcal{B}$,
in which case this property says that $T$ admits generic factorizations
at $X$, or equivalently that $T$ is familial at $X$.

Here we generalize these results to the two-dimensional setting, replacing
$\mathcal{A}$ with an arbitrary bicategory $\mathscr{A}$, and $\mathbf{Set}$
with $\mathbf{Cat}$. In this two-dimensional setting, simply asking
that a pseudofunctor $F\colon\mathscr{A}\to\mathbf{Cat}$ be a coproduct
of representables is often too strong of a condition. Instead, we
will only ask that $F$ be a lax conical colimit of representables.
This in turn allows for the weaker notion of lax generic factorizations
(and lax familial representability) for pseudofunctors of bicategories
$T\colon\mathscr{A}\to\mathscr{B}$.

We also compare our lax familial pseudofunctors to Weber's familial
2-functors, finding our description is more general (not requiring
a terminal object in $\mathscr{A}$), though essentially equivalent
when a terminal object does exist. Moreover, our description of lax
generics allows for an equivalence between lax generic factorizations
and lax familial representability.

Finally, we characterize our lax familial pseudofunctors as right
lax $\mathsf{F}$-adjoints followed by locally discrete fibrations
of bicategories, which in turn yields a simple definition of parametric
right adjoint pseudofunctors.
\end{abstract}

\author{Charles Walker}
\thanks{The author gratefully acknowledges the support of an Australian Government
Research Training Program Scholarship. }
\keywords{generic factorizations, lax conical colimit of representables}
\address{Department of Mathematics and Statistics, Masaryk University, Kotl{\'a}{\v r}sk{\'a}
2, Brno 61137, Czech Republic }
\email{\tt{walker@math.muni.cz}}

\maketitle
\tableofcontents{}

\section{Introduction}

This paper is concerned with the notion of \emph{familial representability},
a condition first studied in detail by Diers \cite{Diers} for 1-categories,
and how the theory of familial representability can be generalized
to the two-dimensional setting.

\subsection{Familial representability}

Given a category $\mathcal{A}$ and presheaf $F\colon\mathcal{A}\to\mathbf{Set}$
(actually a ``copresheaf'', we suppress the ``co'' for brevity),
it is often useful to know whether this presheaf is a coproduct of
representable presheaves; meaning 
\[
F\cong\sum_{m\in\mathfrak{M}}\mathcal{A}\left(P_{m},-\right)
\]
for some set $\mathfrak{M}$ and function $P_{\left(-\right)}\colon\mathfrak{M}\to\mathcal{A}$.
Such presheaves have a straightforward characterization: a presheaf
$F$ is a coproduct of representables precisely when each connected
component of its category of elements, denoted $\textnormal{el }F$,
has an initial object. Expressing this condition in more detail, this
means that for any object $\left(D,w\right)$ in $\textnormal{el }F$
there exists an object $\left(A,x\right)$ and morphism $k\colon\left(A,x\right)\to\left(D,w\right)$
where $\left(A,x\right)$ satisfies the following property (which
defines initial objects in a connected component): for any diagram
in $\textnormal{el }F$ as below 
\[
\xymatrix{\; & \left(C,z\right)\myar{g}{d}\\
\left(A,x\right)\myard{f}{r}\ar@{..>}[ur]^{h} & \left(B,y\right)
}
\]
there exists a unique morphism $h\colon\left(A,x\right)\to\left(C,z\right)$,
and consequently the above triangle commutes.

Of particular interest is the case where $F$ is of the form $\mathcal{B}\left(X,T-\right)$
for a functor $T\colon\mathcal{A}\to\mathcal{B}$ between categories
$\mathcal{A}$ and $\mathcal{B}$. This condition, first studied in
detail by Diers \cite{Diers}, asks that we have an isomorphism $\mathcal{B}\left(X,T-\right)\cong\sum_{m\in\mathfrak{M}}\mathcal{A}\left(P_{m},-\right)$
and generalizes $T$ having a left adjoint. Thus such a $T$ is often
referred to as a functor having a left multiadjoint \cite{Diers},
however we will simply refer to such a $T$ as \emph{familial}. It
is also worth noting that the functors $T$ with this property may
be seen as the admissible maps against the KZ pseudomonad \cite{WalkerYonedaKZ}
freely adding sums.\footnote{Under this characterization one would suitably replace categories
with their opposites, as the condition given concerns \emph{co}presheaves.}

If we specialize the above to this case, we see that asking $\mathcal{B}\left(X,T-\right)$
be familial amounts to asking that for any $w\colon X\to TD$ there
exists an $x\colon X\to TA$ and $k\colon A\to D$ such that $w=Tk\cdot x$,
and $x$ is ``generic'' meaning that it satisfies the following
property: given any commuting square as on the left below 
\[
\xymatrix{X\myar{z}{r}\ar[d]_{x} & TB\ar[d]^{Tg} &  &  & X\myar{z}{r}\ar[d]_{x} & TB\ar[d]^{Tg}\\
TA\myard{Tf}{r} & TC & \; &  & TA\myard{Tf}{r}\ar[ur]|-{Th} & TC
}
\]
there exists a unique $h\colon A\to B$ such that $Th\cdot x=z$ (note
that $g\cdot h=f$ can be shown as a consequence). Such a factorization
$w=Tk\cdot x$ is called a generic factorization, and thus when this
is true for all $X$, we say $T$ \emph{admits generic factorizations
\cite{WeberGeneric}.}

There are a number of natural examples of familial functors (or equivalently
functors which admit generic factorizations), with the author's favorite
being composition of spans in a category $\mathcal{E}$ with pullbacks.
\begin{example}
Given a category $\mathcal{E}$ with pullbacks, one may form the bicategory
of spans in $\mathcal{E}$, typically denoted $\mathbf{Span}\left(\mathcal{E}\right)$.
For any triple of objects $X,Y,Z\in\mathcal{E}$ the composition functor
\[
c_{X,Y,Z}\colon\mathbf{Span}\left(\mathcal{E}\right)\left(Y,Z\right)\times\mathbf{Span}\left(\mathcal{E}\right)\left(X,Y\right)\to\mathbf{Span}\left(\mathcal{E}\right)\left(X,Z\right)
\]
is familial since for any three spans $\left(s,t\right)\colon X\to Z$,
$\left(a,b\right)\colon X\to Y$ and $\left(c,d\right)\colon Y\to Z$
the universal property of the limiting cone defining the composite
of spans $\left(c,d\right)\circ\left(a,b\right)$ is a bijection $p\mapsto\left(x,h,y\right)$
as below
\[
\xymatrix{ & T\ar[rd]^{t}\ar[ld]_{s}\ar@{..>}[dd]^{p} &  &  &  &  & T\ar@{..>}[dd]|-{h}\ar@{..>}[dr]|-{y}\ar@{..>}[dl]|-{x}\ar@/^{2pc}/[rrdd]^{t}\ar@/_{2pc}/[lldd]_{s}\\
X &  & Z &  &  & P\ar[rd]^{b}\ar[ld]_{a} &  & Q\ar[rd]^{d}\ar[ld]_{c}\\
 & M\ar[ru]_{r}\ar[lu]^{l} &  &  & X &  & Y &  & Z
}
\]
where $\left(l,r\right)$ is the composite $\left(c,d\right)\circ\left(a,b\right)$.
Written another way, this is a natural bijection between $\mathbf{Span}\left(\mathcal{E}\right)\left(X,Z\right)\left[\left(s,t\right),\left(c,d\right)\circ\left(a,b\right)\right]$
and 
\[
\sum_{h\colon T\to Y}\mathbf{Span}\left(\mathcal{E}\right)\left(X,Y\right)\left[\left(s,h\right),\left(a,b\right)\right]\times\mathbf{Span}\left(\mathcal{E}\right)\left(Y,Z\right)\left[\left(h,t\right),\left(c,d\right)\right]
\]
and thus we directly exhibit each presheaf
\[
\mathbf{Span}\left(\mathcal{E}\right)\left(X,Z\right)\left[\left(s,t\right),-\circ-\right]\colon\mathbf{Span}\left(\mathcal{E}\right)\left(Y,Z\right)\times\mathbf{Span}\left(\mathcal{E}\right)\left(X,Y\right)\to\mathbf{Set}
\]
as a coproduct of representables, and thus exhibit $c_{X,Y,Z}$ as
a familial functor. One thing to notice here is that $c_{X,Y,Z}$
is an example of a familial functor where the domain category does
not have a terminal object; thus definitions of higher analogues of
familial functors should also not require terminal objects.
\end{example}

\subsection{The problem with pseudo familial representability}

It is the purpose of this paper to generalize these notions of familial
representability to the two-dimensional setting, replacing the category
$\mathcal{A}$ with a bicategory $\mathscr{A}$, and replacing $\mathbf{Set}$
with $\mathbf{Cat}$. However, this is not a straightforward generalization,
as asking that a pseudofunctor $F\colon\mathscr{A}\to\mathbf{Cat}$
be a coproduct of representables is often too strong of a condition.
To see why, consider the case where a pseudofunctor $T\colon\mathscr{A}\to\mathscr{B}$
is such that each $\mathscr{B}\left(X,T-\right)$ is a coproduct of
representables, meaning we have an equivalence 
\[
\mathscr{B}\left(X,T-\right)\simeq\sum_{m\in\mathfrak{M}_{X}}\mathscr{A}\left(P_{m},-\right)
\]
for some set $\mathfrak{M}_{X}$ and function $P_{\left(-\right)}\colon\mathfrak{M}_{X}\to\mathscr{A}$.
Such an equivalence must be defined by an assignation as below, which
would send each 2-cell $\alpha$ as on the left 
\[
\xymatrix@R=1em{\\
X\ar@/^{1.4pc}/[rr]^{f}\ar@/_{1.4pc}/[rr]_{g} & \ar@{}[]|-{\Downarrow\alpha} & TA & \mapsto & m, & P_{m}\ar@/^{1.4pc}/[rr]^{\overline{f}}\ar@/_{1.4pc}/[rr]_{\overline{g}} & \ar@{}[]|-{\Downarrow\overline{\alpha}} & A\\
\\
}
\]
to an $\overline{\alpha}\colon\overline{f}\Rightarrow\overline{g}$
as on the right, where $f\cong T\overline{f}\cdot\delta$ and $g\cong T\overline{g}\cdot\delta$
for the same generic $\delta\colon X\to TP_{m}$ corresponding to
the identity at $P_{m}$. This is an unreasonably strong condition:
we should not in general expect two 1-cells to factor through the
same generic just because there is a comparison map between them.\footnote{Here ``generic'' means a morphism $\delta$ corresponding to the
identity at some $P_{m}$, and the ``generic'' factorization is
obtained from substituting an $f\colon X\to TA$ into the equivalence
and applying naturality with respect to the induced $\overline{f}\colon P_{m}\to A$.} In general, this should only be expected when the comparison map
is invertible. We will therefore need a weaker notion of familial
representability in two dimensions.

\subsection{Lax familial representability}

To address the above problem, we weaken the condition on $\mathscr{B}\left(X,T-\right)$,
now only asking that it be a \emph{lax conical colimit of representables.}\footnote{A lax colimit is a weighted colimit in which the universal property
of weighted colimits replaces pseudo natural transformations with
lax natural transformations. A lax \emph{conical} colimit is such
a lax colimit where the weight $J=\Delta\mathbf{1}$ is constant at
the terminal category. We recall this notion in more detail in Subsection
\ref{laxconsec}.}\emph{ }We will use the convenient notation
\[
\mathscr{B}\left(X,T-\right)\simeq\int_{\textnormal{lax}}^{m\in\mathfrak{M}_{X}}\mathscr{A}\left(P_{m},-\right)
\]
which is justified as a lax conical colimit can be seen as an instance
of a lax coend. We then define a pseudofunctor of bicategories $T\colon\mathscr{A}\to\mathscr{B}$
to be \emph{lax familial} when each $\mathscr{B}\left(X,T-\right)$
is a lax conical colimit of representables (in a way which is natural
in $X$ in a suitable sense).

To see why being lax familial is a natural condition on a pseudofunctor
$T\colon\mathscr{A}\to\mathscr{B}$, consider the problem of calculating
a left Kan extension as below 
\[
\xymatrix@R=1em{\left[\mathscr{A}^{\textnormal{op}},\mathbf{Cat}\right]\ar@{..>}[rr]^{\textnormal{lan}_{T}} &  & \left[\mathscr{B}^{\textnormal{op}},\mathbf{Cat}\right]\\
\\
\mathscr{A}\ar[uu]^{y_{\mathscr{A}}}\ar[rr]_{T} &  & \mathscr{B}\ar[uu]_{y_{\mathscr{B}}}
}
\]
for a given pseudofunctor $T$ (where $\mathscr{A}$ and $\mathscr{B}$
are small). In general this left extension should not be expected
to have a nice form. However, if $T$ is a pseudofunctor that is lax
familial, so that each $\mathscr{B}\left(X,T-\right)$ is a lax conical
colimit of representables, then this left extension will have a simpler
description. Said in more detail, such a left extension is generally
computed as a bi-coend (whose construction generally requires formally
adding in isomorphisms, hence the complexity), but in the case where
$T$ is lax familial the left extension may be more easily computed
as a lax coend.

An important example of this situation (shown to be lax familial in
Example \ref{spanembfam}) is given by taking $T$ as the canonical
inclusion of a small category $\mathcal{E}$ with pullbacks into its
bicategory of spans $\mathbf{Span}\left(\mathcal{E}\right)$ 
\[
\xymatrix@R=1em{\left[\mathcal{E}^{\textnormal{op}},\mathbf{Cat}\right]\ar@{..>}[rr]^{\textnormal{lan}_{T}\quad\;\;\;} &  & \left[\mathscr{\mathbf{Span}\left(\mathcal{E}\right)}^{\textnormal{op}},\mathbf{Cat}\right]\\
\\
\mathcal{E}\ar[uu]^{y_{\mathcal{E}}}\ar[rr]_{T} &  & \mathbf{Span}\left(\mathcal{E}\right)\ar[uu]_{y_{\mathbf{Span}\left(\mathcal{E}\right)}}
}
\]
and forming the left extension $\textnormal{lan}_{T}$ as above, with
right adjoint $\textnormal{res}_{T}$ given by restricting along $T$.
Now, recognizing $\left[\mathscr{\mathbf{Span}\left(\mathcal{E}\right)}^{\textnormal{op}},\mathbf{Cat}\right]$
as the 2-category of fibrations with sums (by the universal property
of spans) \cite{unispans}, and noting that the extension-restriction
adjunction is pseudomonadic (a consequence of $T$ being bijective
on objects) \cite{psmonadicity}, the reader will recognize this left
extension as the free functor for the pseudomonad $\Sigma_{\mathcal{E}}$
for fibrations over $\mathcal{E}$ with sums. In this way one can
derive the pseudomonad for fibrations with sums (as the composite
$\textnormal{res}_{T}\cdot\textnormal{lan}_{T}$), and understand
why this pseudomonad has a simpler description that one would generally
expect for pseudomonads arising in this way. Note the same can be
done for fibrations with products, replacing $\mathbf{Span}\left(\mathcal{E}\right)$
with $\mathbf{Span}\left(\mathcal{E}\right)^{\textnormal{co}}$.

Also note that it will be shown in future work that the above is a
special case of a more general result; for a category\footnote{More generally, one can use a bicategory here. However, it suffices
to use a 1-category in most of the interesting examples.} $\mathcal{E}$ and bicategory $\mathscr{B}$, bijective on objects
pseudofunctors $\mathcal{E}\to\mathscr{B}$ correspond with bi-cocontinuous
pseudomonads on $\left[\mathcal{E}^{\textnormal{op}},\mathbf{Cat}\right]$,
in which case the pseudo-algebras of the pseudomonad form the 2-category
$\left[\mathscr{B}^{\textnormal{op}},\mathbf{Cat}\right]$. 

Of course, whilst understanding the above situation is the author's
motivation, there are other motivating examples of lax familial functors.
For instance the results of Weber \cite{Webfam} (and the later shown
equivalence with our definition) show that the composite 2-monads
of \cite{Weber2005} that describe symmetric and braided analogues
of the \textgreek{w}-operads of \cite{Bat1998} are examples of lax
familial functors.

\subsection{Main results and structure of the paper}

The goal of this paper is to generalize all of the important results
true for familial functors to the case of lax familial functors. The
notions given in this paper concerning lax familial representability
and its corresponding notions of genericity are all new. The only
exception is to note that Weber's definitions of familial (2-)functors
\cite{Webfam} and notions of genericity can be seen as instances
of our definitions in the case where we have a terminal object (though
this is not obvious, as Weber's definitions use fibrations and look
considerably different at first glance).

In Section \ref{genelements} we generalize the basic result that
a presheaf is a coproduct of representables if and only if each connected
component of its category of elements has an initial object, now giving
a description of when a $\mathbf{Cat}$-valued presheaf is a lax conical
colimit of representables in Theorem \ref{p5:laxequiv}.

In Section \ref{laxgenfacsec} we generalize the result that a functor
is familial if and only if it admits generic factorizations, by defining
a notion of lax-generic factorizations and showing this condition
is equivalent to being lax familial in Theorem \ref{laxequivT}. Note
that such an equivalence was not shown by Weber \cite{Webfam}.

The main result of Section \ref{webersection} is Theorem \ref{webermain},
which states that (a slightly stricter version of) our definition
of lax familial pseudofunctors is equivalent to Weber's very different-looking
definition. This is strong evidence that our definitions of familial
pseudofunctors are the correct ones.

In Section \ref{examples} we provide a number of natural examples
of lax familial pseudofunctors, further justifying our definitions. 

In Section \ref{spectrumsec} we generalize Diers' result \cite{Diers}
that a functor is familial if and only if it factors as a right adjoint
followed by a discrete fibration. This generalization is given in
Theorem \ref{specfacthm}, which states that a pseudofunctor is lax
familial if and only if it factors as an appropriate right lax $\mathsf{F}$-adjoint
(a special type of lax adjunction), followed by the two-dimensional
version of a discrete fibration \cite{2discfib}. In the author's
opinion this gives the most natural-looking characterization of lax
familial pseudofunctors, as the other characterizations appear quite
technical.

\section{Background}

In this section we will recall the necessary background knowledge
for this paper. We will first recall the basic theory of familial
functors and generic factorizations in the one-dimensional case \cite{WeberGeneric},
and then go on to recall the basics of lax conical colimits \cite{Street2Limits}
and the Grothendieck construction \cite{fibbicaty}, which will replace
the category of elements in the two-dimensional setting.

\subsection{Generic factorizations in one dimension}

We will first recall the basic fact that a presheaf is a coproduct
of representables if and only if each connected component of its category
of elements has an initial object. It is worth explaining this result
in some more detail, as later on in the two-dimensional case simple
conditions such as asking each connected component has an initial
object will not suffice. Of course these are all well-known results
of Diers \cite{Diers,DiersDiag} (also see \cite{WeberGeneric} for
a more recent account).
\begin{defn}
Given a presheaf $F\colon\mathcal{A}\to\mathbf{Set}$, recall the
classical notion of the category of elements of $F$ as the category
with objects given by pairs $\left(A\in\mathcal{A},x\in FA\right)$
and morphisms $\left(A,x\right)\nrightarrow\left(B,y\right)$ given
by maps $f\colon A\to B$ such that $Ff\left(x\right)=y$. We denote
this category $\textnormal{el }F$. 
\end{defn}

The following definition is more complicated than it needs to be,
in that the generics are precisely the initial objects in the connected
components. The reason for stating it this way is that it more closely
matches the definitions needed in two dimensions.
\begin{defn}
\label{defgeneric1} Given a presheaf $F\colon\mathcal{A}\to\mathbf{Set}$,
we say an object $\left(A,x\right)\in\textnormal{el }F$ is \emph{el-generic
}if for any given objects $\left(B,y\right)$, $\left(C,z\right)$
and morphisms $f$ and $g$ as below 
\[
\xymatrix{\; & \left(C,z\right)\myar{g}{d}\\
\left(A,x\right)\myard{f}{r}\ar@{..>}[ur]^{h} & \left(B,y\right)
}
\]
there exists a unique morphism $h\colon\left(A,x\right)\to\left(C,z\right)$.
It is then automatic that the above triangle commutes.

Moreover, given two el-generic objects $\left(A,x\right)$ and $\left(D,w\right)$,
an \emph{el-generic morphism} $\left(A,x\right)\to\left(D,w\right)$
is any such morphism in $\textnormal{el }F$.\footnote{It is trivial to check that any such el-generic morphism is both invertible
and unique.}
\end{defn}

\begin{rem}
The reader will note that this is stronger than asking for the existence
of a unique lifting $h$. In fact, asking that $h$ be the unique
morphism (and not just the unique lifting), is a condition which will
turn out often to be too strong in dimension two. Nevertheless, this
is the correct definition for dimension one, when one requires the
indexing\footnote{For a presheaf $F$, the ``indexing'' $\mathfrak{M}^{F}$ refers
to the category of el-generics and el-generic morphisms between them.} to be a set.

If $h$ was only required to be the unique lifting, our indexing would
only be a groupoid in general, as any el-generic morphism $\left(A,x\right)\to\left(D,w\right)$
will still be invertible, but perhaps not unique. This weaker version
of el-generic objects arises in the study of stable functors under
the name of ``candidate'' generics \cite{TaylorCandidate}, and
also appears in the study of qualitative domains \cite{Lam88,Gir86}.
It is interesting to note that our two-dimensional versions of famillial
representability will restrict to this weaker version in dimension
one.
\end{rem}

The basic result describing when a presheaf is a coproduct of representables
is then the following.
\begin{prop}[Diers \cite{Diers}]
\label{coprodrep} Given a presheaf $F\colon\mathcal{A}\to\mathbf{Set}$,
the following are equivalent: 
\begin{enumerate}
\item $F\colon\mathcal{A}\to\mathbf{Set}$ is a coproduct of representables; 
\item each connected component of $\textnormal{el }F$ has an initial object; 
\item for any $\left(B,y\right)\in\textnormal{el }F$ there exists a generic
object $\left(A,x\right)$ and morphism $f\colon\left(A,x\right)\to\left(B,y\right)$.\footnote{Clearly in this situation the generic object is necessarily unique.}
\end{enumerate}
\end{prop}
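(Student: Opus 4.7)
The plan is to prove the circular chain (1)$\Rightarrow$(2)$\Rightarrow$(3)$\Rightarrow$(1), relying throughout on the key observation that Definition \ref{defgeneric1} of an el-generic object is equivalent to the object being initial in its connected component of $\textnormal{el }F$.

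To handle (1)$\Rightarrow$(2), I would note that if $F\cong\sum_{m\in\mathfrak{M}}\mathcal{A}(P_m,-)$ then the category of elements decomposes as a disjoint union $\textnormal{el }F\cong\sum_{m\in\mathfrak{M}}\textnormal{el }\mathcal{A}(P_m,-)$, and each summand is a connected category with initial object $(P_m,1_{P_m})$ by Yoneda. For (2)$\Rightarrow$(3), given $(B,y)\in\textnormal{el }F$ I would take the initial object $(A,x)$ of its connected component; the resulting unique morphism $(A,x)\to(B,y)$ exhibits (3), and one checks $(A,x)$ is el-generic in the sense of Definition \ref{defgeneric1}: given $f\colon(A,x)\to(B,y)$ and $g\colon(C,z)\to(B,y)$ with $(C,z)$ in the same component, initiality provides a unique $h\colon(A,x)\to(C,z)$, and commutativity of the triangle follows because $g\cdot h$ and $f$ are both morphisms from the initial object into $(B,y)$.

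For (3)$\Rightarrow$(1), I would choose one el-generic representative $(P_m,\xi_m)$ per connected component of $\textnormal{el }F$, indexed by a set $\mathfrak{M}$. Each $\xi_m\in FP_m$ classifies via Yoneda a natural transformation $\mathcal{A}(P_m,-)\Rightarrow F$, and assembling these yields a comparison $\Phi\colon\sum_{m\in\mathfrak{M}}\mathcal{A}(P_m,-)\Rightarrow F$ whose component at $A$ sends $f\colon P_m\to A$ to $Ff(\xi_m)$. Pointwise surjectivity of $\Phi_A$ is exactly hypothesis (3) applied to objects $(A,y)$, while injectivity follows from the uniqueness in Definition \ref{defgeneric1}: if $Ff(\xi_m)=Fg(\xi_n)$ then $(P_m,\xi_m)$ and $(P_n,\xi_n)$ both lie in the same component as $(A,Ff(\xi_m))$, forcing $m=n$ by our choice of one representative per component, after which $f=g$ follows from uniqueness of morphisms out of an el-generic object.

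The one mildly delicate point is verifying that an el-generic object is initial in its \emph{entire} connected component, not only with respect to objects one step away from it along a single morphism; this will be handled by induction along a zigzag, turning each backward arrow into a forward arrow via the defining property of el-generics. As a byproduct this step also shows that the initial object of a connected component is unique up to unique isomorphism, which justifies the choice of representatives used in the proof of (3)$\Rightarrow$(1).
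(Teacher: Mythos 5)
Your proof is correct. Note that the paper itself offers no proof of this proposition: it is recalled as a classical result of Diers, with only the surrounding remarks that ``the generics are precisely the initial objects in the connected components'' and that (3) is just (2) spelled out in more detail. Your argument supplies exactly the content behind those remarks. The zigzag induction you flag as the delicate point is precisely what makes el-genericity (a condition quantified over cospans) equivalent to initiality in the whole connected component, and it is genuinely needed, since the definition only gives liftings against cospans one step away; your observation that, once a morphism out of an el-generic into some object exists, it is automatically unique (take $g$ to be an identity in Definition \ref{defgeneric1}) is the other ingredient, and you use it correctly for the injectivity of $\Phi_A$. The decomposition $\textnormal{el }F\cong\sum_{m}\textnormal{el }\mathcal{A}(P_m,-)$ with each summand a coslice having initial object $(P_m,1_{P_m})$ handles (1)$\Rightarrow$(2) as in the standard Diers argument, and (3)$\Rightarrow$(1) via the Yoneda-assembled comparison $\Phi$ is likewise the standard route. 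One small gloss worth tightening: surjectivity of $\Phi_A$ is not literally ``exactly hypothesis (3)'', since (3) produces \emph{some} generic mapping to $(A,y)$ rather than your chosen representative $(P_m,\xi_m)$; you need to pass to the representative, which is immediate from your zigzag lemma (the representative is initial in the component containing $(A,y)$, so it too maps there), so the gap is only expository, not mathematical.
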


\begin{rem}
Of course (3) above is simply expanding (2) into more detail. This
more detailed version will be more analogous to the characterizations
we give in the two-dimensional case.
\end{rem}

We now consider the case of a functor $T\colon\mathcal{A}\to\mathcal{B}$,
asking when $\mathcal{B}\left(X,T-\right)$ is a coproduct of representables.
\begin{defn}
\label{leftmultiadjoint} We say a functor $T\colon\mathcal{A}\to\mathcal{B}$
is \emph{familial} if for every $X\in\mathcal{B}$ the presheaf $\mathcal{B}\left(X,T-\right)\colon\mathcal{A}\to\mathbf{Set}$
is a coproduct of representables. 
\end{defn}

Specializing Definition \ref{defgeneric1} to this case, we recover
the following definition of a ``generic morphism'' (also known as
``diagonally universal morphism'' in the work of Diers). This definition
is originally due to Diers \cite{Diers}, but we follow the terminology
of Weber \cite{WeberGeneric} and \cite{CarbJohnFam}.
\begin{defn}
Given a functor $T\colon\mathcal{A}\to\mathcal{B}$ we say that a
morphism $x\colon X\to TA$ for some $X\in\mathcal{B}$ and $A\in\mathcal{A}$
is \emph{generic} if for any commuting square as on the left below
\[
\xymatrix{X\myar{z}{r}\ar[d]_{x} & TB\ar[d]^{Tg} &  &  & X\myar{z}{r}\ar[d]_{x} & TB\ar[d]^{Tg}\\
TA\myard{Tf}{r} & TC & \; &  & TA\myard{Tf}{r}\ar[ur]|-{Th} & TC
}
\]
there exists a unique $h\colon A\to B$ such that $Th\cdot x=z$.
That $f=g\cdot h$ follows as a consequence of this property. 
\end{defn}

Applying Proposition \ref{coprodrep} to presheaves of the form $\mathcal{B}\left(X,T-\right)$
for a given functor $T\colon\mathcal{A}\to\mathcal{B}$, we obtain
the following characterization of familial functors in terms of these
generic morphisms.
\begin{prop}[Diers \cite{Diers}]
Given a functor $T\colon\mathcal{A}\to\mathcal{B}$, the following
are equivalent: 
\begin{enumerate}
\item the functor $T$ is familial; 
\item for every morphism $f\colon X\to TW$ there exists a generic morphism
$\delta\colon X\to TA$ and morphism $\overline{f}\colon A\to W$
such that $f=T\overline{f}\cdot\delta$. 
\end{enumerate}
\end{prop}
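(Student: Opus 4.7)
The plan is to fix $X\in\mathcal{B}$ and apply Proposition \ref{coprodrep} to the presheaf $F_{X}:=\mathcal{B}\left(X,T-\right)\colon\mathcal{A}\to\mathbf{Set}$. By Definition \ref{leftmultiadjoint}, condition (1) of the proposition is exactly that each such $F_{X}$ is a coproduct of representables; and after translating condition (2) into the language of $\textnormal{el }F_{X}$, the equivalence reduces to the implication (1)$\Leftrightarrow$(3) of Proposition \ref{coprodrep} applied pointwise in $X$.

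For this translation, I would first unpack $\textnormal{el }F_{X}$: its objects are pairs $\left(W,f\colon X\to TW\right)$, and a morphism $\left(A,\delta\right)\to\left(W,f\right)$ is a map $\overline{f}\colon A\to W$ in $\mathcal{A}$ satisfying $T\overline{f}\cdot\delta=f$. Hence the factorization data $\left(A,\delta,\overline{f}\right)$ appearing in clause (2) is precisely an object $\left(A,\delta\right)\in\textnormal{el }F_{X}$ together with a chosen morphism to $\left(W,f\right)$.

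Next I would verify that el-generic objects $\left(A,\delta\right)\in\textnormal{el }F_{X}$, in the sense of Definition \ref{defgeneric1}, agree with generic morphisms $\delta\colon X\to TA$ in the sense preceding the proposition. Giving morphisms $f\colon\left(A,\delta\right)\to\left(B,y\right)$ and $g\colon\left(C,z\right)\to\left(B,y\right)$ in $\textnormal{el }F_{X}$ is exactly giving maps $f\colon A\to B$ and $g\colon C\to B$ in $\mathcal{A}$ together with a commuting square $Tf\cdot\delta=y=Tg\cdot z$ in $\mathcal{B}$; and a morphism $h\colon\left(A,\delta\right)\to\left(C,z\right)$ is exactly a map $h\colon A\to C$ with $Th\cdot\delta=z$. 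Thus the el-generic clause reads precisely as the defining property of $\delta$ being generic. The consequential equation $g\cdot h=f$ is recovered uniformly from the el-generic condition by applying it with $\left(C,z\right)=\left(B,y\right)$ and $g=1_{B}$: both $f$ and $g\cdot h$ are morphisms $\left(A,\delta\right)\to\left(B,y\right)$ in $\textnormal{el }F_{X}$, so uniqueness forces them to coincide.

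With these identifications in hand, condition (2) is exactly clause (3) of Proposition \ref{coprodrep} applied to every $F_{X}$, and so Proposition \ref{coprodrep} supplies the equivalence with (1). The only step requiring a touch of care is the automatic derivation of $g\cdot h=f$ from the uniqueness appearing in Definition \ref{defgeneric1}, handled as above; everything else is a direct unwinding of definitions.
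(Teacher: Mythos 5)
Your proposal is correct and matches the paper's (implicit) argument exactly: the paper obtains this proposition precisely by applying Proposition \ref{coprodrep} to the presheaves $\mathcal{B}\left(X,T-\right)$, with the same identification of el-generic objects of $\textnormal{el}\,\mathcal{B}\left(X,T-\right)$ with generic morphisms $\delta\colon X\to TA$. Your derivation of $g\cdot h=f$ from uniqueness (taking the cospan with $1_{B}$) is the standard and intended argument, so nothing is missing.
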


Following Weber's terminology, condition (2) is often stated another
way.
\begin{defn}
\cite{WeberGeneric} Given a functor $T\colon\mathcal{A}\to\mathcal{B}$,
if condition (2) above is satisfied, we say that \emph{$T$ admits
generic factorizations}. 
\end{defn}

\subsection{Lax conical colimits and the Grothendieck construction\label{laxconsec}}

Here we give the required background on lax conical colimits and the
Grothendieck construction. In our convention, we specify the direction
of 2-cells in a lax natural transformation $\alpha\colon F\Rightarrow G$
as below
\[
\xymatrix{FA\ar[r]^{Ff}\ar[d]_{\alpha_{A}}\utwocell[0.5]{dr}{\alpha_{f}} & FB\ar[d]^{\alpha_{B}}\\
GA\ar[r]_{Gf} & GB
}
\]
for any morphism $f\colon A\to B$ in the domain bicategory, with
2-cells in the opposite direction defining oplax natural transformations.
\begin{defn}[lax conical colimits \cite{Street2Limits}]
\label{laxconcolim} Given a category $\mathcal{A}$, a bicategory
$\mathscr{K}$, and pseudofunctor $F\colon\mathcal{A}\to\mathscr{K}$,
the \emph{lax conical colimit} of $F$ consists of an object $T\in\mathscr{K}$,
along with for every $A\in\mathcal{A}$ a map $\varphi_{A}\colon FA\to T$
and for every morphism $f\colon A\to B$ in $\mathcal{A}$ a 2-cell
\[
\xymatrix{ & T\\
FA\ar[ur]^{\varphi_{A}}\ar[rr]_{Ff} & \ltwocell[0.4]{u}{\varphi_{f}} & FB\ar[ul]_{\varphi_{B}}
}
\]
compatible with the binary and nullary constraints of $F$ \cite{Street2Limits}.
This data, which is also called a lax cocone, and may be seen as a
lax natural transformation $\varphi\colon\Delta\mathbf{1}\Rightarrow\mathscr{K}\left(F-,T\right)\colon\mathcal{A}^{\textnormal{op}}\to\mathbf{Cat}$,
is required to be universal in that 
\[
\begin{aligned}\mathscr{K}\left(T,S\right) & \to\left[\mathcal{A}^{\textnormal{op}},\mathbf{Cat}\right]\left(\Delta\mathbf{1},\mathscr{K}\left(F-,S\right)\right)\\
\alpha & \mapsto\mathscr{K}\left(F-,\alpha\right)\cdot\varphi
\end{aligned}
\]
defines an equivalence (where $\left[\mathcal{A}^{\textnormal{op}},\mathbf{Cat}\right]$
is the 2-category of pseudofunctors, lax natural transformations,
and modifications). If one reverses the direction of the 2-cell $\varphi_{f}$,
and replaces lax transformations with oplax transformations, one then
has the notion of an \emph{oplax colimit}. The name \emph{conical}
refers to the fact that the weight of such a colimit is the terminal
presheaf $\Delta\mathbf{1}$, so that the data takes the form of cone-shaped
diagrams.
\end{defn}

\begin{rem}
It is worth noting that the above definition can be used when $F\colon\mathcal{A}\to\mathscr{K}$
is only required to be a lax functor. Also, one may note that lax
conical colimits can be seen as an instance of weighted bi-colimits
(though we will not use this). 
\end{rem}

When $\mathscr{K}=\mathbf{Cat}$, such a lax colimit can easily be
calculated using the Grothendieck construction. We will proceed to
describe this construction below (though we will be more general by
replacing the category $\mathcal{A}$ with a bicategory $\mathscr{A}$). 
\begin{defn}[Grothendieck construction]
\label{groth} Given a bicategory $\mathscr{A}$ and pseudofunctor
$F\colon\mathscr{A}\to\mathbf{Cat}$, the \emph{(bi)category of elements}
of $F$, denoted by $\textnormal{el }F$ or by 
\[
\int_{\textnormal{lax}}^{A\in\mathscr{A}}FA
\]
is the bicategory with: 
\begin{description}
\item [{Objects}] An object is a pair of the form $\left(A\in\mathscr{A},x\in FA\right)$; 
\item [{Morphisms}] A morphism $\left(A,x\right)\nrightarrow\left(B,y\right)$
is a pair $f\colon A\to B$ in $\mathscr{A}$ and $\alpha\colon Ff\left(x\right)\to y$
in $FB$; we say such a morphism $\left(f,\alpha\right)$ is \emph{opcartesian}
if $\alpha$ is invertible;
\item [{2-cells}] A 2-cell $\left(f,\alpha\right)\Rightarrow\left(g,\beta\right)\colon\left(A,x\right)\nrightarrow\left(B,y\right)$
is a 2-cell $\nu\colon f\Rightarrow g$ in $\mathscr{A}$ rendering
commutative
\[
\xymatrix{Ff\left(x\right)\myar{\left(F\nu\right)_{x}}{r}\ar@/_{1pc}/[rr]_{\alpha} & Fg\left(x\right)\myar{\beta}{r} & y}
.
\]
\end{description}
It is also common to refer to the bicategory $\int_{\textnormal{lax}}^{A\in\mathscr{A}}FA$
with its canonical projection to $\mathscr{A}$ as the \emph{Grothendieck
construction} of $F$. 
\end{defn}

The ``lax coend'' notation as used above is defined as follows.
However, we will not burden this paper with all the technical details
of the definition.
\begin{defn}
Let $\mathscr{A}$ be a bicategory and let $F\colon\mathscr{A}\to\mathbf{Cat}$
and $G\colon\mathscr{A}^{\textnormal{op}}\to\mathbf{Cat}$ be pseudofunctors.
We define the \emph{lax-coend}
\[
\int_{\textnormal{lax}}^{A\in\mathscr{A}}FA\times GA
\]
as the vertex of the universal diagram, over each morphism $f\colon A\to B$
in $\mathscr{A}$, 
\[
\xymatrix@R=0.8em{ & FA\times GB\ar[rd]^{Ff\times\textnormal{id}}\ar[ld]_{\textnormal{id}\times Gf}\\
FA\times GA\myard{\epsilon_{A}}{rd}\rtwocell{rr}{\sigma} &  & FB\times GB\myar{\epsilon_{B}}{ld}\\
 & \int_{\textnormal{lax}}^{A\in\mathscr{A}}FA\times GA
}
\]
subject to the canonical nullary and binary coherence conditions with
regards to the pseudofunctoriality of $F$ and $G$ \cite{Foscocoend}.
\end{defn}

\begin{rem}
When $\mathscr{A}$ is a category, the notation $\int_{\textnormal{lax}}^{A\in\mathscr{A}}FA$
is justified, as the category of elements can be written as a lax
colimit as in Definition \ref{laxconcolim}. In the case where $\mathscr{A}$
is a bicategory, $\textnormal{el }F$ is an appropriate tri-colimit
of $F$, and the notation is still justified (though in a more technical
sense that we will not burden this paper with; see \cite{Buckley}). 
\end{rem}

\begin{rem}
To make clear the duality of covariance and contravariance in the
above construction we note the following. For a pseudofunctor $F\colon\mathscr{A}\to\mathbf{Cat}$,
its lax colimit is given by the lax coend $\int_{\textnormal{lax}}^{A\in\mathscr{A}}FA$.
In the contravariant case of a pseudofunctor $G\colon\mathscr{A}^{\textnormal{op}}\to\mathbf{Cat}$,
the lax coend $\int_{\textnormal{lax}}^{A\in\mathscr{A}}GA$ coincides
with the oplax colimit of $G$, which could also be written as the
oplax coend $\int_{\textnormal{oplax}}^{A\in\mathscr{A^{\textnormal{op}}}}GA$.
Typically given a contravariant $G\colon\mathscr{A}^{\textnormal{op}}\to\mathbf{Cat}$,
one takes the oplax colimit of $G$ as its category of elements (so
that we may project from this category of elements into $\mathscr{A}$).
\end{rem}

Finally, we should recall the notion of a fibration and cartesian
morphisms.
\begin{defn}
\cite{fibbicaty} Let $p\colon\mathcal{F}\to\mathcal{E}$ be a functor.
We say a morphism $\phi\colon W\to B$ in $\mathcal{F}$ is \emph{$p$-cartesian}
if for any $\psi\colon A\to B$ and $r\colon pA\to pW$ such that
the right diagram below commutes
\[
\xymatrix{W\ar[r]^{\phi} & B &  &  & pW\ar[r]^{p\phi} & pB\\
A\ar[ur]_{\psi}\ar@{..>}[u]^{\overline{r}} &  &  &  & pA\ar[ur]_{p\psi}\ar[u]^{r}
}
\]
there exists a unique $\overline{r}\colon A\to W$ such that $p\overline{r}=r$
and the left diagram above commutes. If for any morphism $f\colon X\to pB$
in $\mathcal{E}$ there exists a $p$-cartesian morphism $\phi\colon f^{*}B\to B$
in $\mathcal{F}$ such that $p\left(\phi\right)=f$, we then say $p$
is a \emph{fibration}.
\end{defn}

\section{Lax generics in bicategories of elements\label{genelements}}

Generalizing the fact that a presheaf is a coproduct of representables
if and only if each connected component of the category of elements
has an initial object, our first main goal is to understand when a
pseudofunctor $F\colon\mathscr{A}\to\mathbf{Cat}$ a lax conical colimit
of representables, written
\[
F\simeq\int_{\textnormal{lax}}^{m\in\mathfrak{M}}\mathscr{A}\left(P_{m},-\right)
\]
for some $\mathfrak{M\in\mathbf{Cat}}$ and pseudofunctor $P_{\left(-\right)}\colon\mathfrak{M}\to\mathscr{A}$,
giving a characterization in terms of the category of elements of
$F$ (which is analogous to each connected component having an initial
object in the one dimensional case). However, before we can describe
lax el-generic\footnote{We have the ``el'' here as this notion of genericity is used in
bicategories of elements.} objects (the appropriate analogue of these initial objects) and morphisms
in bicategories of elements, we will have to introduce the language
needed to describe them. In particular, we introduce ``mixed left
liftings'' which are similar to left liftings \cite{companion},
except that the induced arrow's direction is reversed. Note that basic
properties for left liftings, such as the pasting lemma, or the lifting
through an identity being itself, do not hold in general for mixed
left liftings \cite{yonedastructures}. 
\begin{defn}[mixed left lifting property]
Let $\mathscr{C}$ be a bicategory. We say a diagram as on the left
below 
\[
\xymatrix{\; & \mathcal{C}\myar{g}{d} &  &  & \; & \mathcal{C}\myar{g}{d}\\
\mathcal{A}\myard{f}{r}\ar@{..>}[ur]^{h} & \mathcal{B}\utwocell[0.35]{lu}{\nu} &  &  & \mathcal{A}\myard{f}{r}\ar[ur]^{k} & \mathcal{B}\utwocell[0.35]{lu}{\psi}
}
\]
exhibits $\left(h,\nu\right)$ as the \emph{mixed left lifting }of
$f$ through $g$ if for any diagram as on the right above, there
exists a unique 2-cell $\lambda\colon k\Rightarrow h$ such that

\[
\xymatrix{\; & \mathcal{C}\myar{g}{d} & \ar@{}[d]|-{=} & \; & \mathcal{C}\myar{g}{d}\\
\mathcal{A}\myard{f}{r}\ar[ur]|-{k}\ar@/^{2pc}/[ur]^{h} & \mathcal{B}\utwocell[0.35]{lu}{\psi}\utwocell[0.75]{lu}{\lambda} & \; & \mathcal{A}\myard{f}{r}\ar[ur]^{h} & \mathcal{B}\utwocell[0.35]{lu}{\nu}\mathrlap{\ .}
}
\]
Moreover, we say such a lifting $\left(h,\nu\right)$ is \emph{strong
}if $h$ is subterminal\footnote{A subterminal object $I$ in a category is one where any morphism
into it is unique if it exists. When the category in question has
a terminal object, this is equivalent to the unique morphism $I\to\mathbf{1}$
being a monomorphism; hence the name subterminal.} in $\mathscr{C}\left(\mathcal{A},\mathcal{C}\right)$. 
\end{defn}

\begin{rem}
It is not hard to see that strong mixed liftings are unique up to
unique isomorphism. Indeed, it is this stronger notion that will be
used though this section. 
\end{rem}

The following lemma shows that an arrow $h$ which arises as a strong
mixed lifting has the property that the strong mixed lifting of $h$
through the identity is itself.
\begin{lem}
\label{inducedgen} Suppose the left diagram below 
\[
\xymatrix{\; & \mathcal{C}\myar{g}{d} &  &  & \; & \mathcal{C}\myar{1_{\mathcal{C}}}{d}\\
\mathcal{A}\myard{f}{r}\ar[ur]^{h} & \mathcal{B}\utwocell[0.35]{lu}{\nu} &  &  & \mathcal{A}\myard{h}{r}\ar[ur]^{h} & \mathcal{C}\utwocell[0.37]{lu}{\textnormal{id}}
}
\]
exhibits $\left(h,\nu\right)$ as the strong mixed lifting of $f$
through $g$. Then the right diagram above exhibits $\left(h,\textnormal{id}\right)$
as the strong mixed lifting of $h$ through $1_{\mathcal{C}}.$ 
\end{lem}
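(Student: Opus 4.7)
The plan is as follows. The qualifier ``strong'' is inherited trivially: the new lifting $(h, \mathrm{id})$ is required to be strong precisely when $h$ is subterminal in $\mathscr{C}(\mathcal{A}, \mathcal{C})$, and this is already part of the hypothesis that $(h, \nu)$ is strong. So the real content is the mixed left lifting property itself.

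To set this up, I would fix a competing datum $(k, \psi)$ with $\psi \colon h \Rightarrow 1_{\mathcal{C}} \cdot k$ and seek a unique 2-cell $\lambda \colon k \Rightarrow h$ whose pasting with $\mathrm{id}_{h}$ recovers $\psi$. After absorbing the unit isomorphisms identifying $1_{\mathcal{C}} \cdot k$ with $k$ and $1_{\mathcal{C}} \cdot \lambda$ with $\lambda$, the pasting equation collapses to $\lambda \circ \psi = \mathrm{id}_{h}$. Both the uniqueness of $\lambda$ and this equation will come for free from subterminality of $h$: any 2-cell $k \Rightarrow h$ is unique when it exists, and any 2-cell $h \Rightarrow h$ is forced to be $\mathrm{id}_{h}$.

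Hence the only real work is producing some $\lambda \colon k \Rightarrow h$ at all. For this I would whisker $\psi$ by $g$ on the left and paste with $\nu \colon f \Rightarrow g \cdot h$ to obtain a 2-cell $(g \cdot \psi) \circ \nu \colon f \Rightarrow g \cdot k$, then invoke the universal property of the strong mixed lifting $(h, \nu)$ on the pair $\bigl(k, (g \cdot \psi) \circ \nu\bigr)$ to extract the desired $\lambda$. The only minor obstacle will be bookkeeping on 2-cell directions and verifying that the pasting equation forced by the universal property of $(h, \nu)$ is consistent with $\lambda \circ \psi = \mathrm{id}_{h}$; this is automatic after invoking subterminality, so the argument reduces to a direct double application of strongness.
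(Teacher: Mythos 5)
Your proposal is correct and follows essentially the same route as the paper's proof: whisker the comparison 2-cell by $g$, paste with $\nu$, invoke the universal property of the strong mixed lifting $\left(h,\nu\right)$ to produce $\lambda\colon k\Rightarrow h$, and then use subterminality of $h$ to force $\lambda\circ\psi=\textnormal{id}_{h}$ together with the uniqueness of $\lambda$. The only slip is your initial phrasing of the required pasting condition (``pasting with $\textnormal{id}_{h}$ recovers $\psi$''), which you immediately correct to the right equation $\lambda\circ\psi=\textnormal{id}_{h}$, so nothing substantive is affected.
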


\begin{proof}
Given any $k\colon\mathcal{A}\to\mathcal{C}$ and $\zeta\colon h\Rightarrow k$
we have by universality of $\left(h,\nu\right)$ an induced $\lambda\colon k\Rightarrow h$
such that

\[
\xymatrix{\; & \mathcal{C}\myar{g}{d} & \ar@{}[d]|-{=} & \; & \mathcal{C}\myar{g}{d}\\
\mathcal{A}\myard{f}{r}\ar[ur]|-{h}\ar@/^{1.3pc}/[ur]|-{k}\ar@/^{2.6pc}/[ur]^{h} & \mathcal{B}\utwocell[0.35]{lu}{\nu}\ultwocell[0.65]{lu}{\zeta}\ultwocell[0.96]{lu}{\lambda} & \; & \mathcal{A}\myard{f}{r}\ar[ur]^{h} & \mathcal{B}\utwocell[0.35]{lu}{\nu}\mathrlap{\ ;}
}
\]
that is, since $h$ is subterminal, a unique induced $\lambda\colon k\Rightarrow h$
such that $\lambda\zeta$ is the identity. This proves the result. 
\end{proof}
We now have the necessary background to introduce the notions of lax
el-generic object and el-generic morphism in bicategories of elements. 
\begin{defn}[lax el-generic objects]
\label{deflaxgenericobject} Let $\mathscr{A}$ be a bicategory and
$F\colon\mathscr{A}\to\mathbf{Cat}$ be a pseudofunctor. We say that
an object $\left(A,x\right)$ in $\textnormal{el }F$ is a \emph{lax
el-generic object} if for any $\left(B,y\right)$, $\left(C,z\right)$,
$\left(f,\alpha\right)$ and $\left(g,\beta\right)$ as below with
$\beta$ invertible 
\[
\xymatrix{\; & \left(C,z\right)\myar{\left(g,\beta\right)}{d}\\
\left(A,x\right)\myard{\left(f,\alpha\right)}{r}\ar@{..>}[ur]^{\left(h,\gamma\right)} & \left(B,y\right)\utwocell[0.35]{lu}{\nu}
}
\]
\begin{enumerate}
\item there exists a strong mixed left lifting $\left(h,\gamma\right)\colon\left(A,x\right)\to\left(C,z\right)$
exhibited by a 2-cell $\nu\colon f\Rightarrow gh$; 
\item if $\alpha$ is invertible above, then both $\gamma$ and $\nu$ are
also invertible. 
\end{enumerate}
\end{defn}

\begin{rem}
If we replace the isomorphism $\beta$ with an identity above, the
definition remains equivalent. 
\end{rem}

\begin{defn}[el-generic morphisms]
\label{defgenericmorphism} Let $\mathscr{A}$ be a bicategory and
$F\colon\mathscr{A}\to\mathbf{Cat}$ be a pseudofunctor, and suppose
that $\left(A,x\right)$ is a lax-generic object in $\textnormal{el }F$.
We say that a morphism $\left(\ell,\phi\right)\colon\left(A,x\right)\rightarrow\left(D,w\right)$
out of $\left(A,x\right)$ in $\textnormal{el }F$ is an \emph{el-generic
morphism} if the diagram below 
\[
\xymatrix{\; & \left(D,w\right)\myar{\left(1_{D},\textnormal{id}\right)}{d}\\
\left(A,x\right)\myard{\left(\ell,\phi\right)}{r}\ar@{..>}[ur]^{\left(\ell,\phi\right)} & \left(D,w\right)\utwocell[0.35]{lu}{\textnormal{id}}
}
\]
exhibits $\left(\ell,\phi\right)$ as the strong mixed left lifting
of $\left(\ell,\phi\right)$ through $\left(1_{D},\textnormal{id}\right)$. 
\end{defn}

\begin{rem}
It is clear from the second part of Definition \ref{deflaxgenericobject}
that for any opcartesian $\left(\ell,\phi\right)$ (meaning $\phi$
is invertible) out of a lax el-generic $\left(A,x\right)$, the induced
mixed lifting must be (isomorphic to) $\left(\ell,\phi\right)$. Thus
opcartesian morphisms are always el-generic.
\end{rem}

\begin{rem}
It is an easy consequence of the universal property that every 2-cell
out of $\left(\ell,\phi\right)$ is a section (in a unique way); and
consequently that any 2-cell between el-generic morphisms is invertible.
Moreover, as $\left(\ell,\phi\right)$ is subterminal within its hom-category
it follows that any isomorphism between el-generic morphisms is unique.
It follows that if $\left(A,x\right)$ and $\left(B,y\right)$ are
lax el-generic objects, then the category of el-generic morphisms
$\left(A,x\right)\to\left(B,y\right)$ is equivalent to a discrete
category (a set). 
\end{rem}

\begin{rem}
It is worth noting that for any lax el-generic object $\left(A,x\right)$
and strong mixed lifting as below 
\[
\xymatrix{\; & \left(C,z\right)\myar{\left(g,\beta\right)}{d}\\
\left(A,x\right)\myard{\left(f,\alpha\right)}{r}\ar@{..>}[ur]^{\left(h,\gamma\right)} & \left(B,y\right)\utwocell[0.35]{lu}{\nu}
}
\]
with $\beta$ invertible, the induced morphism $\left(h,\gamma\right)$
is an el-generic morphism as a consequence of Lemma \ref{inducedgen}. 
\end{rem}

The following is a step towards characterizing when an $F\colon\mathscr{A}\to\mathbf{Cat}$
is a lax conical colimit of representables, indexed by the following
category of el-generic objects and morphisms. The reader will note
the importance of the indexing being a 1-category (and thus the consideration
of representatives of equivalence classes of morphisms in order to
obtain a 1-category), just as it is important in the one-dimensional
case that the indexing is a set.
\begin{defn}
Let $\mathscr{A}$ be a bicategory and let $F\colon\mathscr{A}\to\mathbf{Cat}$
be a pseudofunctor. Suppose that el-generic morphisms between lax
el-generic objects compose to el-generic morphisms. Define $\mathscr{A}_{g}^{F}$
as the sub-bicategory of $\textnormal{el }F$ consisting of lax el-generic
objects and el-generic morphisms, and define $\mathfrak{M}^{F}$ as
the 1-category consisting of lax el-generic objects in $\textnormal{el }F$
and chosen representatives of isomorphism classes of el-generic morphisms.
\end{defn}

Of course, in the above definition we will have an equivalence $\mathscr{A}_{g}^{F}\simeq\mathfrak{M}^{F}$.
\begin{prop}
\label{ff} Let $\mathscr{A}$ be a bicategory and let $F\colon\mathscr{A}\to\mathbf{Cat}$
be a pseudofunctor. Suppose that el-generic morphisms between lax
el-generic objects compose to el-generic morphisms. Let $P_{\left(-\right)}\colon\mathfrak{M}\to\mathscr{A}$
be the assignment sending a lax el-generic object $\left(A,x\right)$
to $A$ and a representative el-generic morphism between el-generic
objects $\left(s,\phi\right)\colon\left(A,x\right)\to\left(B,y\right)$
to $s\colon A\to B$. Then $P_{\left(-\right)}\colon\mathfrak{M}\to\mathscr{A}$
defines a pseudofunctor, and for every $T\in\mathscr{A}$ there exists
fully faithful functors 
\[
\Lambda_{T}\colon\int_{\textnormal{lax}}^{m\in\mathfrak{M}^{F}}\mathscr{A}\left(P_{m},T\right)\to FT
\]
pseudo-natural in $T\in\mathscr{A}$. 
\end{prop}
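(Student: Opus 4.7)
The plan is to first establish that $P_{(-)}\colon\mathfrak{M}^{F}\to\mathscr{A}$ is a pseudofunctor, then construct $\Lambda_{T}$ as the functor induced by an appropriate lax cocone on $\mathscr{A}(P_{(-)},T)$ with vertex $FT$, and finally verify fullness and faithfulness using the lax el-generic property.

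For pseudofunctoriality of $P_{(-)}$: composition in $\mathfrak{M}^{F}$ is defined via chosen representatives of isomorphism classes, so each composite $P_{s_{2}}\cdot P_{s_{1}}$ in $\mathscr{A}$ carries an invertible 2-cell to $P_{s_{2}s_{1}}$ coming from the isomorphism in $\textnormal{el }F$ between the actual composite $(s_{2},\phi_{2})\circ(s_{1},\phi_{1})$ and its chosen representative. These supply the associators, unit constraints arise analogously from the representative of the identity class, and because any 2-cell between el-generic morphisms is invertible and unique (as noted after Definition \ref{defgenericmorphism}), the pseudofunctor axioms reduce to this uniqueness.

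To build $\Lambda_{T}$, I would exhibit a lax cocone on $\mathscr{A}(P_{(-)},T)$ with vertex $FT$ whose $m$-component at $m=(A,x)$ sends $f\colon A\to T$ to $Ff(x)$ and $\sigma\colon f\Rightarrow f'$ to $(F\sigma)_{x}$, and whose cocone 2-cell at a representative el-generic $s\colon m\to n$ with underlying $(s,\phi)\colon(A,x)\to(B,y)$ has component at $g\colon B\to T$ given by $F(gs)(x)\cong Fg(Fs(x))\xrightarrow{Fg(\phi)}Fg(y)$. Compatibility with the pseudofunctor constraints of $F$ and $P_{(-)}$ yields the lax cocone axioms, and the universal property of the lax coend then produces $\Lambda_{T}$; pseudo-naturality in $T$ follows because all the defining data is natural in $T$ up to the coherence of $F$.

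Full faithfulness is the heart of the proof. For fullness, given $\mu\colon Ff(x)\to Fg(y)$ in $FT$, I would form the 1-cells $(f,\mu)\colon(A,x)\to(T,Fg(y))$ and $(g,\textnormal{id})\colon(B,y)\to(T,Fg(y))$ in $\textnormal{el }F$, observing that the latter is opcartesian; applying the lax el-generic property of $(A,x)$ then produces a strong mixed left lift $(h,\gamma)\colon(A,x)\to(B,y)$ and a 2-cell $\nu\colon f\Rightarrow gh$ which encodes $\mu$ as the appropriate composite involving $Fg(\gamma)$, the pseudofunctoriality coherence of $F$, and $F\nu_{x}$. Replacing $(h,\gamma)$ by its chosen representative in $\mathfrak{M}^{F}$ via the unique invertible 2-cell relating them produces a morphism in the lax coend that $\Lambda_{T}$ sends to $\mu$. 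For faithfulness, any morphism $(s,\alpha)$ in the lax coend mapping to $\mu$ exhibits its underlying el-generic as a strong mixed lift of $(f,\mu)$ through $(g,\textnormal{id})$ with 2-cell $\alpha$; uniqueness up to unique iso of the strong mixed lift, together with the choice of representatives in $\mathfrak{M}^{F}$, pins down $s$, and then uniqueness of the induced 2-cell in the mixed-lift universal property forces $\alpha$ to agree. The main obstacle will be the bookkeeping of coherence 2-cells between composites of el-generic morphisms and their chosen representatives, and verifying that these interact correctly with the pseudofunctor constraints of $F$ when checking the cocone axioms and functoriality of $\Lambda_{T}$.
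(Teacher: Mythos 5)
Your proposal is correct and follows essentially the same route as the paper's proof: the same explicit formula for $\Lambda_{T}$ (the paper writes it directly on the Grothendieck description of the lax coend rather than via a cocone, but the induced assignment $(s,\nu)\mapsto Fg(\phi_{s})\circ(F\nu)_{x}$ is identical), the same fullness argument via the strong mixed lift of $(f,\mu)$ through the opcartesian $(g,\textnormal{id})$ into $(T,Fg(y))$, and the same faithfulness argument using invertibility and uniqueness of 2-cells between el-generics together with the choice of representatives, while pseudofunctoriality of $P_{(-)}$ is the paper's factorization through $\textnormal{el }F$ phrased via the unique isomorphisms to chosen representatives. The only point to make explicit is that the induced lift $(h,\gamma)$ is el-generic (Lemma \ref{inducedgen} and the remark preceding the proposition), which justifies replacing it by a representative in $\mathfrak{M}^{F}$.
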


\begin{proof}
Firstly note that $P_{\left(-\right)}\colon\mathfrak{M}\to\mathscr{A}$
defines a pseudofunctor since it may be written as the composite $\mathfrak{M}^{F}\to\mathscr{A}_{g}^{F}\to\textnormal{el }F\to\mathscr{A}$.
We may then define $\Lambda_{T}$ on objects by the assignment $\left(A,x,f\right)\mapsto Ff\left(x\right)$,
and on morphisms by the assignment (suppressing the pseudofunctoriality
constraints of $F$) 
\begin{equation}
\xymatrix{\left(A,x,f\colon A\to T\right)\ar@{~>}[dd]^{_{\left(h,\gamma,\nu\right)}} & A\ar[dd]_{h} & Fh\left(x\right)\ar[dd]_{\gamma} & A\ar[dd]_{h}\ar[rd]^{f} &  &  & Ff\left(x\right)\ar[d]^{\left(F\nu\right)_{x}}\\
 &  &  & \dtwocell[0.35]{r}{\nu} & T & \mapsto & FgFh\left(x\right)\ar[d]^{Fg\left(\gamma\right)}\\
\left(B,y,g\colon B\to T\right) & B & y & B\ar[ur]_{g} &  &  & Fg\left(y\right)\mathrlap{\ .}
}
\label{lambdadef}
\end{equation}
Observe that we have the following conditions satisfied.

\noun{Functoriality.} Given another 
\[
\xymatrix{\left(B,y,g\colon B\to T\right)\ar@{~>}[dd]^{_{\left(k,\zeta,\mu\right)}} & B\ar[dd]_{k} & Fk\left(y\right)\ar[dd]_{\zeta} & B\ar[dd]_{k}\ar[rd]^{g} &  &  & Fg\left(y\right)\ar[d]^{\left(F\mu\right)_{y}}\\
 &  &  & \dtwocell[0.35]{r}{\mu} & T & \mapsto & FqFk\left(y\right)\ar[d]^{Fq\left(\zeta\right)}\\
\left(C,z,q\colon A\to T\right) & C & z & C\ar[ur]_{q} &  &  & Fq\left(z\right)
}
\]
the commutativity of 
\[
\xymatrix{Ff\left(x\right)\myar{\left(F\nu\right)_{x}}{r} & FgFh\left(x\right)\myar{Fg\left(\gamma\right)}{r}\ar[rd]_{\left(F\mu\right)_{Fh\left(x\right)}} & Fg\left(y\right)\myar{\left(F\mu\right)_{y}}{r} & FqFk\left(y\right)\myar{Fq\left(\zeta\right)}{r} & Fq\left(z\right)\\
 &  & FqFkFh\left(x\right)\ar[ru]_{FqFk\left(\gamma\right)}
}
\]
by naturality of $F\mu$ shows $\Lambda_{T}$ respects binary composition.
It is trivial that identities are preserved.

\noun{Fullness.} Given any objects $\left(A,x,f\colon A\to T\right)$
and $\left(B,y,g\colon B\to T\right)$ and morphism $\phi\colon Ff\left(x\right)\to Fg\left(y\right)$,
we may construct the universal diagram 
\[
\xymatrix{\; & \left(B,y\right)\myar{\left(g,\textnormal{id}\right)}{d}\\
\left(A,x\right)\myard{\left(f,\phi\right)}{r}\ar@{..>}[ur]^{\left(h,\gamma\right)} & \left(B,Fg\left(y\right)\right)\utwocell[0.35]{lu}{\nu}
}
\]
using lax el-genericity of $\left(A,x\right)$. Now $\left(h,\gamma\right)$
is el-generic by Lemma \ref{inducedgen}, and without loss of generality
we can assume it is a representative el-generic (since it is necessarily
isomorphic to one). Then $\Lambda_{T}\left(h,\gamma,\nu\right)=\phi$.

\noun{Faithfulness.} Given another triple $\left(k,\psi,\omega\right)$
such that $\Lambda_{T}\left(k,\psi,\omega\right)=\phi$, we have the
diagram 
\[
\xymatrix{\; & \left(B,y\right)\myar{\left(g,\textnormal{id}\right)}{d}\\
\left(A,x\right)\myard{\left(f,\phi\right)}{r}\ar@{..>}[ur]^{\left(k,\psi\right)} & \left(B,Fg\left(y\right)\right)\utwocell[0.35]{lu}{\omega}
}
\]
But as $\left(k,\psi\right)$ and $\left(h,\gamma\right)$ are both
el-generics, the induced $\left(k,\psi\right)\Rightarrow\left(h,\gamma\right)$
arising from universality of $\left(h,\gamma\right)$ must be invertible.
Also, as they are both representative, they must be equal. As the
identity must then be the induced morphism we conclude $k=h$, $\psi=\gamma$
and $\omega=\nu$.

\noun{Pseudo-naturality.} Given any 1-cell $\alpha\colon T\to S$
in $\mathscr{A}$ the squares 
\[
\xymatrix{\left(A,x,f\colon A\to T\right)\ar[d]_{\Lambda_{T}}\myar{\alpha\cdot\left(-\right)}{r} & \left(A,x,\alpha f\colon A\to S\right)\ar[d]^{\Lambda_{S}}\\
Ff\left(x\right)\myard{F\alpha\cdot\left(-\right)}{r} & F\left(\alpha f\right)\left(x\right)
}
\]
commute up to pseudo-functoriality constraints of $F$, and the family
of squares of the form above satisfy the required naturality, nullary
and binary coherence conditions as a consequence of the corresponding
pseudo-functoriality coherence conditions. 
\end{proof}
We can now characterize precisely when a pseudofunctor $F\colon\mathscr{A}\to\mathbf{Cat}$
is a lax conical colimit of representables. 
\begin{thm}
\label{p5:laxequiv} Let $\mathscr{A}$ be a bicategory and $F\colon\mathscr{A}\to\mathbf{Cat}$
be a pseudofunctor. Then the following are equivalent: 
\begin{enumerate}
\item the pseudofunctor $F\colon\mathscr{A}\to\mathbf{Cat}$ is a lax conical
colimit of representables; 
\item the following conditions hold: 
\begin{enumerate}
\item for every object $\left(B,y\right)$ in $\textnormal{el }F$ there
exists a lax el-generic object $\left(A,x\right)$ and morphism $\left(f,\alpha\right)\colon\left(A,x\right)\nrightarrow\left(B,y\right)$
with $\alpha$ invertible; 
\item el-generic morphisms between lax el-generic objects compose to el-generic
morphisms. 
\end{enumerate}
\end{enumerate}
\end{thm}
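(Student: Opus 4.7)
The plan is to prove the two implications separately, relying on Proposition \ref{ff} to do most of the heavy lifting for the direction $(2) \Rightarrow (1)$, and then unwinding the equivalence carefully for $(1) \Rightarrow (2)$.

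For $(2) \Rightarrow (1)$, condition $(2)(b)$ is precisely the hypothesis of Proposition \ref{ff}, so we immediately obtain a pseudo-natural family of fully faithful functors
\[
\Lambda_T \colon \int_{\textnormal{lax}}^{m \in \mathfrak{M}^F} \mathscr{A}(P_m, T) \to FT .
\]
To upgrade each $\Lambda_T$ to an equivalence, it remains to check essential surjectivity. Given any $y \in FT$, view $(T,y)$ as an object of $\textnormal{el}\, F$ and apply $(2)(a)$ to produce a lax el-generic $(A,x)$ together with an opcartesian morphism $(f,\alpha)\colon (A,x) \to (T,y)$, so $\alpha\colon Ff(x) \to y$ is invertible. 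Replacing $(A,x)$ by its chosen representative in $\mathfrak{M}^F$ via the (necessarily opcartesian, hence el-generic) comparison isomorphism, we obtain a triple $(A,x,f)$ with $\Lambda_T(A,x,f) = Ff(x) \cong y$. Pseudo-naturality of the resulting equivalence follows from that of $\Lambda$, and this gives the lax conical colimit property.

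For $(1) \Rightarrow (2)$, suppose we have an equivalence $F \simeq \int_{\textnormal{lax}}^{m \in \mathfrak{M}} \mathscr{A}(P_m, -)$ pseudo-natural in the second variable. I would first identify the candidate lax el-generic objects: for each $m \in \mathfrak{M}$, the element $x_m \in FP_m$ corresponding across the equivalence to the pair $(m, 1_{P_m})$ makes $(P_m, x_m)$ lax el-generic. The mixed left lifting property of Definition \ref{deflaxgenericobject}, once translated through the equivalence, becomes exactly the universal property of the lax coend, where a morphism $(f,\alpha)\colon (P_m,x_m) \to (B,y)$ is encoded by a pair $(f\colon P_m \to B,\; \alpha)$ with $\alpha$ expressing the component of the lax cocone at $m$ against $y$; the invertibility clause for $\alpha$ in $(2)(a)$ then matches the opcartesian case and delivers the required factorization. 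For $(2)(a)$, any $(B,y) \in \textnormal{el}\,F$ corresponds through the equivalence to some $(m, f\colon P_m \to B)$, giving the opcartesian morphism $(f,\alpha)\colon (P_m, x_m) \to (B,y)$ where $\alpha$ is the coherence isomorphism $Ff(x_m) \cong y$. For $(2)(b)$, el-generic morphisms between lax el-generics correspond under the equivalence to morphisms of $\mathfrak{M}$ (up to canonical isomorphism), and composition in $\mathfrak{M}$ is total, so the composite is again el-generic.

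The main obstacle is the bookkeeping in direction $(1) \Rightarrow (2)$: one has to transport the mixed left lifting property across the equivalence and check that the notions of opcartesian morphism on each side match up, i.e.\ that the invertible $\alpha$ and invertible $\nu$ in Definition \ref{deflaxgenericobject} correspond exactly to the strictly coherent data distinguishing an opcartesian morphism in the category of elements of the lax coend. Once this dictionary is set up, both $(2)(a)$ and $(2)(b)$ reduce to unwinding the definition of the lax coend and its Grothendieck construction; the functoriality and coherence data needed to conclude was already recorded in the verification of Proposition \ref{ff}, so no new calculations beyond this translation are required.
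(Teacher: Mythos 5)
Your overall strategy coincides with the paper's: direction $(2)\Rightarrow(1)$ via Proposition \ref{ff} plus essential surjectivity supplied by (2)(a), and direction $(1)\Rightarrow(2)$ by replacing $\textnormal{el }F$ with the explicit Grothendieck description of $\int_{\textnormal{lax}}^{m\in\mathfrak{M}}\mathscr{A}\left(P_{m},-\right)$, whose objects are triples $\left(m,A,x\colon P_{m}\to A\right)$. The first direction is fine. The problem is in $(1)\Rightarrow(2)$, and it is not mere bookkeeping.

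Condition (2)(b) quantifies over \emph{all} lax el-generic objects of $\textnormal{el }F$, not only the canonical ones $\left(m,P_{m},\textnormal{id}\right)$ that you exhibit. Your argument identifies these canonical objects as lax el-generic and then asserts that el-generic morphisms between lax el-generics "correspond to morphisms of $\mathfrak{M}$," but you never determine which objects of $\textnormal{el }F$ are lax el-generic in the first place. The paper has to prove a classification: $\left(m,A,x\right)$ is lax el-generic if and only if $x$ is an equivalence, and the "only if" half is the genuinely nontrivial step --- one constructs, from the strong mixed lifting of $\left(1,1,\textnormal{id}\right)$ through $\left(1,x,\textnormal{id}\right)$, a candidate pseudo-inverse $x^{*}$ and verifies the triangle identities, the second one by playing two different fillers $\gamma x^{*}$ and $x^{*}\nu^{-1}$ against the subterminality of the lifting. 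Without this classification you cannot rule out "exotic" lax el-generic objects, and the reduction of (2)(b) to "composition in $\mathfrak{M}$ is total" does not go through. Relatedly, you use but do not establish the characterization of el-generic morphisms between lax el-generic objects as exactly the triples $\left(u,f,\alpha\right)$ with $\alpha$ invertible; this is what actually makes closure under composition evident, and in the paper it is derived from the explicit lifting construction (specializing the existence argument to $g=\textnormal{id}$, with $\textnormal{id}$ on $P_{m}$ replaced by an equivalence $x$), which again presupposes the classification. So the proposal follows the paper's route but omits its central argument; filling the gap requires essentially the adjoint-equivalence computation sketched above rather than a translation of notation.
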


\begin{proof}
The direction $\left(2\right)\Rightarrow\left(1\right)$ is clear
from Proposition \ref{ff} as condition (a) means that for any $B\in\mathscr{A}$
and $y\in FB$ we have a lax el-generic $\left(A,x\right)$ and morphism
$\left(f,\alpha\right)\colon\left(A,x\right)\nrightarrow\left(B,y\right)$
in $\textnormal{el }F$ with $\alpha$ invertible, so that 
\[
\Lambda_{B}\left(A,x,f\colon A\to B\right)=Ff\left(x\right)\stackrel{\alpha}{\to}y
\]
which witnesses the essential surjectivity of $\Lambda_{B}$ at $y\in FB$.

For $\left(1\right)\Rightarrow\left(2\right)$, suppose we are given
a category $\mathfrak{M}$ and pseudofunctor $P_{\left(-\right)}\colon\mathfrak{M}\to\mathscr{A}$
(assuming without loss of generality that $P_{\left(-\right)}$ strictly
preserves identities) such that $F\simeq\int_{\textnormal{lax}}^{m\in\mathfrak{M}}\mathscr{A}\left(P_{m},-\right)$,
and consequently 
\[
\textnormal{el }F\simeq\int_{\textnormal{lax}}^{m\in\mathfrak{M}^{F}}\textnormal{el }\mathscr{A}\left(P_{m},-\right).
\]
This shows that $\textnormal{el }F$ is equivalent to the bicategory
with: 
\begin{description}
\item [{Objects}] An object is a triple of the form $\left(m\in\mathfrak{M},A\in\mathscr{A},x\colon P_{m}\to A\right)$; 
\item [{Morphisms}] The morphisms $\left(m,A,x\right)\nrightarrow\left(n,B,y\right)$
are triples comprising a morphism $u\colon m\to n$ in $\mathfrak{M}^{F}$,
a morphism $f\colon A\to B$ in $\mathscr{A}$ and a 2-cell 
\[
\xymatrix{P_{m}\myar{x}{r}\ar[d]_{P_{u}}\dtwocell[0.5]{rd}{\theta} & A\ar[d]^{f}\\
P_{n}\myard{y}{r} & B
}
\]
in $\mathscr{A}$; 
\item [{2-cells}] A 2-cell $\lambda\colon\left(u,f,\theta\right)\Rightarrow\left(u,g,\phi\right)\colon\left(m,A,x\right)\nrightarrow\left(n,B,y\right)$
is a 2-cell $\lambda\colon f\Rightarrow g$ in $\mathscr{A}$ such
that 
\[
\xymatrix{P_{m}\myar{x}{r}\ar[d]_{P_{u}}\dtwocell[0.5]{rd}{\theta} & A\ar[d]^{f} & \ar@{}[d]|-{=} & P_{m}\myar{x}{r}\ar[d]_{P_{u}}\dltwocell[0.4]{rd}{\phi} & A\ar@/_{0.6pc}/[d]_{g}\ar@/^{0.6pc}/[d]^{f}\ltwocell[0.5]{d}{\lambda}\\
P_{n}\myard{y}{r} & B & \; & P_{n}\myard{y}{r} & B\mathrlap{\ .}
}
\]
\end{description}
\noun{Existence of lax el-generics.} We first show that each 
\[
\left(m\in\mathfrak{M}^{F},P_{m}\in\mathscr{A},\textnormal{id}\colon P_{m}\to P_{m}\right)
\]
in $\textnormal{el }F$ is lax el-generic. Consider a diagram 
\[
\xymatrix{\; & \left(n,C,z\right)\myar{\left(\textnormal{id},g,\textnormal{id}\right)}{d}\\
\left(m,P_{m},\textnormal{id}\right)\myard{\left(u,f,\alpha\right)}{r}\ar@{..>}[ur]^{\left(u,h,\gamma\right)} & \left(n,B,y\right)\utwocell[0.35]{lu}{\nu}
}
\]
where $\left(u,f,\alpha\right)$ and $\left(\textnormal{id},g,\textnormal{id}\right)$
are respectively 
\[
\xymatrix{P_{m}\myar{\textnormal{id}}{r}\ar[d]_{P_{u}}\dtwocell[0.5]{rd}{\alpha} & P_{m}\ar[d]^{f} &  &  & P_{n}\myar{z}{r}\ar[d]_{P_{\textnormal{id}}}\dtwocell[0.5]{rd}{\textnormal{id}} & C\ar[d]^{g}\\
P_{n}\myard{y}{r} & B &  &  & P_{n}\myard{y}{r} & B
}
\]
then we recover a canonical $\left(u,h,\gamma\right)$ as 
\begin{equation}
\xymatrix{P_{m}\myar{\textnormal{id}}{r}\ar[d]_{P_{u}}\dtwocell[0.5]{rd}{\textnormal{id}} & P_{m}\ar[d]^{z\cdot P_{u}}\\
P_{n}\myard{z}{r} & C
}
\label{liftingconstruct}
\end{equation}
with the 2-cell $\nu\colon f\Rightarrow gh=gzP_{u}=yP_{u}$ given
as $\alpha$. Now, for universality, suppose we have a $\left(u,k,\phi\right)$
given as 
\[
\xymatrix{P_{m}\myar{\textnormal{id}}{r}\ar[d]_{P_{u}}\dtwocell[0.5]{rd}{\phi} & P_{m}\ar[d]^{k}\\
P_{n}\myard{z}{r} & C
}
\]
with a 2-cell $\psi\colon f\Rightarrow gk$ such that 
\begin{equation}
\xymatrix{P_{m}\myar{\textnormal{id}}{r}\ar[dd]_{P_{u}}\dtwocell[0.5]{rdd}{\alpha} & P_{m}\ar[dd]^{f} & \ar@{}[dd]|-{=} & P_{m}\myar{\textnormal{id}}{r}\ar[d]_{P_{u}}\dtwocell[0.5]{rd}{\phi} & P_{m}\ar[d]^{k}\ar@/^{2pc}/[dd]^{f}\\
 &  &  & P_{n}\myar{z}{r}\ar[d]_{P_{\textnormal{id}}}\dtwocell[0.5]{rd}{\textnormal{id}} & C\ar[d]^{g}\ltwocell[0.3]{r}{\psi} & \;\\
P_{n}\myard{y}{r} & B & \; & P_{n}\myard{y}{r} & B
}
\label{label}
\end{equation}
Then we can take our induced map $\lambda\colon k\Rightarrow h$ as
$\phi\colon k\Rightarrow z\cdot P_{u}$. It is trivial that 
\begin{equation}
\xymatrix{P_{m}\myar{\textnormal{id}}{r}\ar[d]_{P_{u}}\dtwocell[0.5]{rd}{\phi} & P_{m}\ar[d]^{k} & \ar@{}[d]|-{=} & P_{m}\myar{\textnormal{id}}{r}\ar[d]_{P_{u}}\dtwocell[0.5]{rd}{\textnormal{id}} & P_{m}\ar[d]|-{z\cdot P_{u}}\ar@/^{3pc}/[d]^{k} & \;\\
P_{n}\myard{z}{r} & C & \; & P_{n}\myard{z}{r} & C\ltwocell[0.5]{ru}{\lambda}
}
\label{subtermeq}
\end{equation}
so that $\lambda$ is a 2-cell $\left(u,k,\phi\right)\Rightarrow\left(u,h,\gamma\right)$.
Also, from (\ref{subtermeq}) it is clear that $\lambda=\phi$ is
the \emph{only} 2-cell $\left(u,k,\phi\right)\Rightarrow\left(u,h,\gamma\right)$,
meaning $\left(u,h,\gamma\right)$ is subterminal within its hom-category.
Moreover, (\ref{label}) shows $\psi$ pasted with $\lambda=\phi$
is $\alpha=\nu$.

\noun{Classification of lax el-generics.} We now show that an object
\[
\left(m\in\mathfrak{M}^{F},A\in\mathscr{A},x\colon P_{m}\to A\right)
\]
in $\textnormal{el }F$ is lax el-generic if and only if $x$ is an
equivalence. It is clear the above argument generalizes if one replaces
$\left(m,P_{m},\textnormal{id}\right)$ with $\left(m,A,x\right)$
where $x$ is an equivalence. Conversely, if $\left(m,A,x\right)$
is a lax el-generic object then we may construct the universal diagram
\[
\xymatrix{\; & \left(m,P_{m},\textnormal{id}\right)\myar{\left(1,x,\textnormal{id}\right)}{d}\\
\left(m,A,x\right)\myard{\left(1,1,\textnormal{id}\right)}{r}\ar@{..>}[ur]^{\left(1,x^{*},\gamma\right)} & \left(m,A,x\right)\utwocell[0.35]{lu}{\nu}
}
\]
noting that $\nu$ and $\gamma$ are both invertible. In fact, this
gives an adjoint equivalence. That $\nu$ is a 2-cell says 
\[
\xymatrix{P_{m}\myar{x}{r}\ar[dd]_{\textnormal{id}}\dtwocell[0.5]{rdd}{\textnormal{id}} & A\ar[dd]^{\textnormal{id}} &  & P_{m}\myar{x}{r}\ar[d]_{\textnormal{id}}\dltwocell[0.4]{rd}{\gamma} & A\ar[d]_{x^{*}}\ar@/^{2pc}/[dd]^{\textnormal{id}}\\
 &  & = & P_{m}\ar[d]_{\textnormal{id}}\ar[r]_{\textnormal{id}}\dltwocell[0.4]{rd}{\textnormal{id}} & P_{m}\ar[d]_{x}\ltwocell[0.37]{r}{\nu} & \;\\
P_{m}\myard{x}{r} & A &  & P_{m}\myard{x}{r} & A
}
\]
which gives one triangle identity. For the other identity, note that
2-cells $\xi\colon\left(1,x^{*}xx^{*},\gamma\gamma\right)\Rightarrow\left(1,x^{*},\gamma\right)$,
meaning 2-cells $\xi$ such that

\begin{equation}
\xymatrix{P_{m}\myar{x}{r}\ar[d]_{\textnormal{id}}\dltwocell[0.4]{rd}{\gamma\gamma} & A\ar[d]^{x^{*}xx^{*}} & \; & P_{m}\myar{x}{r}\ar[d]_{\textnormal{id}}\dltwocell[0.4]{rd}{\gamma} & A\ar[d]_{x^{*}}\ar@/^{2pc}/[d]^{x^{*}xx^{*}}\ltwocell[0.37]{dr}{\xi} & \;\\
P_{m}\myard{\textnormal{id}}{r} & P_{m} & \ar@{}[u]|-{=} & P_{m}\myard{\textnormal{id}}{r} & A & \;
}
\label{xicoh}
\end{equation}
are unique, as $\left(1,x^{*},\gamma\right)$ is subterminal within
its hom-category. But we may take $\xi$ to be 
\[
\gamma x^{*}\colon\left(1,x^{*}xx^{*},\gamma\gamma\right)\Rightarrow\left(1,x^{*},\gamma\right)
\]
or 
\[
x^{*}\nu^{-1}\colon\left(1,x^{*}xx^{*},\gamma\gamma\right)\Rightarrow\left(1,x^{*},\gamma\right)
\]
which both satisfy (\ref{xicoh}). Thus $\gamma x^{*}=x^{*}\nu^{-1}$
and so $\gamma x^{*}\cdot x^{*}\nu=\textnormal{id}$ giving the other
triangle identity.

\noun{Existence of lax el-generic factorizations.} Suppose we are
given a $\left(n,B,y\colon P_{n}\to B\right)$ in $\textnormal{el }F$.
We have the map $\left(n,P_{n},\textnormal{id}\colon P_{n}\to P_{n}\right)\nrightarrow\left(n,B,y\colon P_{n}\to B\right)$
given as 
\[
\xymatrix{P_{n}\myar{\textnormal{id}}{r}\ar[d]_{P_{\textnormal{id}}}\dtwocell[0.5]{rd}{\textnormal{id}} & P_{n}\ar[d]^{y}\\
P_{n}\myard{y}{r} & B
}
\]
which is of the required form since the 2-cell involved is invertible.

\noun{El-generic morphisms form a category. }Before showing that el-generic
morphisms form a category, we will need a characterization of them.
Now, specializing the earlier argument of ``existence of expected
lax el-generics'' to the case when $g$ is the identity (though generalizing
the identity on $P_{m}$ to an equivalence $x\colon P_{m}\to A$)
we see that if $\left(m,A,x\right)$ is el-generic (i.e. $x$ is an
equivalence) 
\[
\xymatrix{\; & \left(n,C,z\right)\myar{\left(\textnormal{id},\textnormal{id},\textnormal{id}\right)}{d}\\
\left(m,A,x\right)\myard{\left(u,f,\alpha\right)}{r}\ar@{..>}[ur]^{\left(u,h,\gamma\right)} & \left(n,B,y\right)\utwocell[0.35]{lu}{\nu}
}
\]
the lifting $\left(u,h,\gamma\right)$ above, constructed as in (\ref{liftingconstruct}),
has $\gamma$ invertible. It is also clear that if $\left(u,f,\alpha\right)$
is such that $\alpha$ is invertible, then the lifting $\left(u,h,\gamma\right)$
through $\left(\textnormal{id},\textnormal{id},\textnormal{id}\right)$
constructed as in (\ref{liftingconstruct}) is given by $\left(u,f,\alpha\right)$.

This shows that the el-generic morphisms between lax el-generic objects
are diagrams of the form 
\[
\xymatrix{P_{m}\myar{x}{r}\ar[d]_{P_{u}}\dtwocell[0.5]{rd}{\alpha} & A\ar[d]^{f}\\
P_{n}\myard{y}{r} & B
}
\]
with $\alpha$ invertible, and it is clear that these are closed under
composition and that identities are such diagrams. 
\end{proof}
\begin{rem}
When $F\colon\mathscr{A}\to\mathbf{Cat}$ is a lax conical colimit
of representables, and from a lax el-generic object $\left(A,x\right)$
we construct the universal diagram 
\[
\xymatrix{\; & \left(C,z\right)\myar{\left(g,\beta\right)}{d}\\
\left(A,x\right)\myard{\left(f,\alpha\right)}{r}\ar@{..>}[ur]^{\left(h,\gamma\right)} & \left(B,y\right)\utwocell[0.35]{lu}{\nu}
}
\]
the 2-cell $\nu$ is the \emph{unique} 2-cell $\left(f,\alpha\right)\Rightarrow\left(g,\beta\right)\cdot\left(h,\gamma\right)$.
This is since for such an $F$, el-generic morphisms compose and any
map $\left(g,\beta\right)$ with $\beta$ invertible is generic. Subterminality
of $\left(g,\beta\right)\cdot\left(h,\gamma\right)$ then gives uniqueness. 
\end{rem}

\begin{rem}
\label{Munique} When $F\colon\mathscr{A}\to\mathbf{Cat}$ is a lax
conical colimit of representables, written $F\simeq\int_{\textnormal{lax}}^{m\in\mathfrak{M}^{F}}\mathscr{A}\left(P_{m},-\right)$,
then $\mathfrak{M}^{F}$ is equivalent to the category of strict\footnote{Strict here means if both $\alpha$ and $\beta$ are identities, then
both $\nu$ and $\gamma$ are identities.} lax el-generic objects $\left(A,x\right)$ and representative el-generic
morphisms in $\textnormal{el }F$. This is a consequence of the characterization
of lax el-generic objects and morphisms given in the above proof of
Theorem \ref{p5:laxequiv}. Moreover, as Theorem \ref{p5:laxequiv}
constructs $\mathfrak{M}^{F}$ as the the category of lax el-generic
objects and morphisms, we conclude this non-strict choice of $\mathfrak{M}^{F}$
is also equivalent (to the category indexing $F$ as a lax conical
colimit of representables).
\end{rem}

The following lemma will not be used until the last section, but is
expressed in terms of el-generic objects and so we give it here.
\begin{lem}
\label{opcartequiv} Let $\mathscr{A}$ be a bicategory and let $F\colon\mathscr{A}\to\mathbf{Cat}$
be a pseudofunctor. Then every opcartesian morphism between two lax
el-generic objects $\left(h,\gamma\right)\colon\left(A,x\right)\nrightarrow\left(C,z\right)$
in $\textnormal{el }F$ is an equivalence. 
\end{lem}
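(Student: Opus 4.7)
The plan is to exploit the lax el-genericity of both $(A,x)$ and $(C,z)$ to produce a pseudoinverse for $(h,\gamma)$ in two symmetric applications of Definition~\ref{deflaxgenericobject}.

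First, I apply the lax el-generic property of $(C,z)$ to the cospan consisting of the identity $(1_C,\mathrm{id})\colon(C,z)\to(C,z)$ and the given opcartesian $(h,\gamma)\colon(A,x)\to(C,z)$. Since $\gamma$ is invertible (the opcartesian hypothesis), it plays the role of the $\beta$ in the definition, and so we obtain a strong mixed left lifting $(h',\gamma')\colon(C,z)\to(A,x)$ together with a 2-cell $\nu_1\colon 1_C\Rightarrow h\cdot h'$. Because the horizontal component is $(1_C,\mathrm{id})$ whose second coordinate is invertible, part~(2) of Definition~\ref{deflaxgenericobject} guarantees that both $\gamma'$ and $\nu_1$ are invertible. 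Viewed in $\textnormal{el}\,F$, the invertible $\nu_1$ is precisely an isomorphism $(1_C,\mathrm{id})\cong(h,\gamma)\cdot(h',\gamma')$, exhibiting $(h',\gamma')$ as a right pseudoinverse of $(h,\gamma)$.

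Second, I apply the lax el-generic property of $(A,x)$ in a dual fashion, this time to the cospan consisting of $(1_A,\mathrm{id})\colon(A,x)\to(A,x)$ and the morphism $(h',\gamma')\colon(C,z)\to(A,x)$ just constructed, whose invertible component $\gamma'$ is exactly what is required to play the role of $\beta$. This yields a strong mixed left lifting $(h'',\gamma'')\colon(A,x)\to(C,z)$ and an invertible 2-cell $\nu_2\colon 1_A\Rightarrow h'\cdot h''$, the invertibility coming once more from part~(2) applied to $\alpha=\mathrm{id}$. Hence $(1_A,\mathrm{id})\cong(h',\gamma')\cdot(h'',\gamma'')$ in $\textnormal{el}\,F$.

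Finally, a standard one-line argument closes the proof: from the two isomorphisms produced above one computes
\[
(h,\gamma)\;\cong\;(h,\gamma)\cdot(h',\gamma')\cdot(h'',\gamma'')\;\cong\;(h'',\gamma''),
\]
and therefore
\[
(h',\gamma')\cdot(h,\gamma)\;\cong\;(h',\gamma')\cdot(h'',\gamma'')\;\cong\;(1_A,\mathrm{id}).
\]
Combined with $(h,\gamma)\cdot(h',\gamma')\cong(1_C,\mathrm{id})$ from the first step, this exhibits $(h',\gamma')$ as a two-sided pseudoinverse of $(h,\gamma)$, so $(h,\gamma)$ is an equivalence in $\textnormal{el}\,F$. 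The only subtlety worth noting is matching the roles in Definition~\ref{deflaxgenericobject} correctly in each application; once this is done, no genuine obstacle remains, as all invertibility claims are immediate consequences of part~(2) of that definition.
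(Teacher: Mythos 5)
Your proof is correct and follows essentially the same route as the paper: lift $(1_C,\mathrm{id})$ through the opcartesian $(h,\gamma)$ using lax el-genericity of $(C,z)$, then lift $(1_A,\mathrm{id})$ through the resulting $(h',\gamma')$ using lax el-genericity of $(A,x)$, with part (2) of Definition \ref{deflaxgenericobject} supplying the invertibility of the comparison 2-cells. The concluding computation identifying the second lifting with $(h,\gamma)$ and extracting a two-sided pseudoinverse is the same as the paper's, just spelled out in slightly more detail.
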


\begin{proof}
Given such a $\left(h,\gamma\right)$ we may form a $\left(k,\psi\right)$
as on the left below

\[
\xymatrix{\; & \left(A,x\right)\ar[d]^{\left(h,\gamma\right)} &  & \; & \left(C,z\right)\ar[d]^{\left(k,\psi\right)}\\
\left(C,z\right)\ar[r]_{\left(1,\textnormal{id}\right)}\ar[ur]^{\left(k,\psi\right)} & \left(C,z\right)\utwocell[0.35]{ul}{\nu} &  & \left(A,x\right)\ar[r]_{\left(1,\textnormal{id}\right)}\ar[ur]^{\left(h',\gamma'\right)} & \left(A,x\right)\utwocell[0.35]{ul}{\mu}
}
\]
and one can then form a $\left(h',\gamma'\right)$ as on the right
above. As $\nu$ and $\mu$ have inverses
\[
\left(h',\gamma'\right)\cong\left(h,\gamma\right)\left(k,\psi\right)\left(h',\gamma'\right)\cong\left(h,\gamma\right)
\]
so $\left(h,\gamma\right)$ has pseudo-inverse $\left(k,\psi\right)$.
\end{proof}

\section{Lax generic factorizations and lax familial pseudofunctors\label{laxgenfacsec}}

Here we specialize the results of Section \ref{genelements} to the
case when $F\colon\mathscr{A}\to\mathbf{Cat}$ is of the form $\mathscr{B}\left(X,T-\right)$
for a pseudofunctor $T\colon\mathscr{A}\to\mathscr{B}$. The following
is a generalization of Diers' notion of familial functor (given in
Definition \ref{leftmultiadjoint}) to the case of a pseudofunctor
$T\colon\mathscr{A}\to\mathscr{B}$. 
\begin{defn}
\label{deflaxfamilial} Let $\mathscr{A}$ and $\mathscr{B}$ be bicategories
and let $T\colon\mathscr{A}\to\mathscr{B}$ be a pseudofunctor. We
say that $T$ is \emph{lax familial} if there exists a pseudofunctor
$\mathfrak{M}_{\left(-\right)}\colon\mathscr{B}^{\textnormal{op}}\to\mathbf{Cat}$
and a pseudofunctor $\mathbf{P}\colon\int_{\textnormal{lax}}^{X\in\mathscr{B}}\mathfrak{M}_{X}\to\mathscr{A}$
such that 
\[
\mathscr{B}\left(X,T-\right)\simeq\int_{\textnormal{lax}}^{m\in\mathfrak{M}_{X}}\mathscr{A}\left(P_{m}^{X},-\right)
\]
for all $X\in\mathscr{B}$, where each $P_{\left(-\right)}^{X}\colon\mathfrak{M}_{X}\to\mathscr{A}$
is obtained from $\mathbf{P}$ by composing with the inclusion $\mathfrak{M}_{X}\to\int_{\textnormal{lax}}^{X\in\mathscr{B}}\mathfrak{M}_{X}$. 
\end{defn}

\begin{rem}
One might wonder why we did not simply define $T$ to be lax familial
when every 
\[
\mathscr{B}\left(X,T-\right)\colon\mathscr{A}\to\mathbf{Cat}
\]
is a lax conical colimit of representables. The reason is that this
condition would only be sufficient to force $\mathbf{P}$ (which may
be constructed from this condition) to be a normal lax functor. 

This should not be surprising. In dimension one, simply asking that
each $\mathscr{B}\left(X,T-\right)$ is a coproduct of representables
is enough to define $T$ being familial since the indexings $\mathfrak{M}_{X}$
are sets, and thus there are no naturality conditions to consider.
However, when the indexing is a category we must account for these
naturality conditions (equivalent to ensuring that $\mathbf{P}$ is
a pseudofunctor), and so the definition of a lax familial pseudofunctor
is slightly more complicated.
\end{rem}

\begin{rem}
More abstractly, one can define familial functors as the admissible
maps against the cocompletion under coproducts \cite{WalkerYonedaKZ},
sending a category $\mathcal{A}$ to $\int_{\textnormal{lax}}^{I\in\mathbf{Set}}\mathbf{CAT}\left(I,\mathcal{A}\right)$,
and define lax familial functors as the admissible maps against the
cocompletion under lax conical colimits, sending a bicategory $\mathscr{A}$
to $\int_{\textnormal{lax}}^{\mathcal{I}\in\mathbf{Cat}}\mathbf{BICAT}\left(\mathcal{I},\mathscr{A}\right)$.
However, there are a number of technicalities here, as one should
consider opposite categories (as we are really using corepresentables),
and the theory of ``higher'' versions of KZ pseudomonads \cite{kock1995}
is not fully developed.
\end{rem}

Before applying Theorem \ref{p5:laxequiv} to $\mathbf{Cat}$-valued
presheaves of the form $\mathscr{B}\left(X,T-\right)$, we will need
the appropriate notions of genericity with respect to a pseudofunctor
$T\colon\mathscr{A}\to\mathscr{B}$. The following definitions are
recovered by specializing the definitions of genericity in the last
section, namely Definitions \ref{deflaxgenericobject} and \ref{defgenericmorphism},
to the case when $F\colon\mathscr{A}\to\mathbf{Cat}$ is of the form
$\mathscr{B}\left(X,T-\right)$ for a pseudofunctor $T\colon\mathscr{A}\to\mathscr{B}$.
\begin{defn}
\label{deflaxgenericT} Let $\mathscr{A}$ and $\mathscr{B}$ be bicategories
and let $T\colon\mathscr{A}\to\mathscr{B}$ be a pseudofunctor. Then
a 1-cell $\delta\colon X\to TA$ is \emph{lax-generic} if for any
diagram and 2-cell $\alpha$ as on the left below 
\[
\xymatrix{X\myar{z}{r}\ar[d]_{\delta} & TB\ar[d]^{Tg} & \ar@{}[d]|-{=} & X\myar{z}{r}\ar[d]_{\delta}\utwocell[0.3]{rd}{\gamma} & TB\ar[d]^{Tg}\\
TA\myard{Tf}{r}\utwocell[0.5]{ru}{\alpha} & TC & \; & TA\myard{Tf}{r}\ar[ur]|-{Th} & TC\utwocell[0.35]{ul}{T\nu}
}
\]
there exists a diagram and 2-cells $\nu$ and $\gamma$ as on the
right above\footnote{We are suppressing the pseudofunctorality constraint $Tg\cdot Th\cong Tgh$.}
which is equal to $\alpha$, such that: 
\begin{enumerate}
\item given any 2-cells $\omega,\tau\colon k\Rightarrow h$ such that
\[
\xymatrix{X\myar{z}{r}\ar[d]_{\delta}\utwocell[0.3]{rd}{\gamma}\ultwocell[0.65]{rd}{T\omega} & TB & \ar@{}[d]|-{=} & X\myar{z}{r}\ar[d]_{\delta}\utwocell[0.3]{rd}{\gamma}\ultwocell[0.65]{rd}{T\tau} & TB\\
TA\ar[ur]|-{Th}\ar@/_{1.4pc}/[ur]_{Tk} & \; & \; & TA\ar[ur]|-{Th}\ar@/_{1.4pc}/[ur]_{Tk} & \;
}
\]
we have $\omega=\tau$; 
\item given any other diagram 
\[
\xymatrix{X\myar{z}{r}\ar[d]_{\delta}\utwocell[0.3]{rd}{\phi} & TB\ar[d]^{Tg}\\
TA\myard{Tf}{r}\ar[ur]|-{Tk} & TC\utwocell[0.35]{ul}{T\psi}
}
\]
equal to $\alpha$, there exists a (necessarily unique) 2-cell $\overline{\psi}\colon k\Rightarrow h$
such that 
\[
\xymatrix{X\myar{z}{r}\ar[d]_{\delta}\utwocell[0.3]{rd}{\phi} & TB & \ar@{}[d]|-{=} & X\myar{z}{r}\ar[d]_{\delta}\utwocell[0.3]{rd}{\gamma}\ultwocell[0.65]{rd}{T\overline{\psi}} & TB\\
TA\ar[ur]|-{Tk} & \; & \; & TA\ar[ur]|-{Th}\ar@/_{1.4pc}/[ur]_{Tk} & \;
}
\]
and 
\[
\xymatrix{ & B\ar[d]^{g} & \ar@{}[d]|-{=} &  & B\ar[d]^{g}\\
A\myard{f}{r}\ar[ur]|-{k}\ar@/^{1.8pc}/[ur]^{h} & C\utwocell[0.35]{ul}{\psi}\utwocell[0.75]{ul}{\overline{\psi}} & \; & A\myard{f}{r}\ar[ur]|-{h} & C\utwocell[0.35]{ul}{\nu}\mathrlap{\ ;}
}
\]
\item if $\alpha$ is invertible, then both $\gamma$ and $\nu$ are invertible. 
\end{enumerate}
We call a factorization

\[
\xymatrix{X\myar{z}{r}\ar[d]_{\delta} & TB\ar[d]^{Tg} & \ar@{}[d]|-{=} & X\myar{z}{r}\ar[d]_{\delta}\utwocell[0.3]{rd}{\gamma} & TB\ar[d]^{Tg}\\
TA\myard{Tf}{r}\utwocell[0.5]{ru}{\alpha} & TC & \; & TA\myard{Tf}{r}\ar[ur]|-{Th} & TC\utwocell[0.35]{ul}{T\nu}
}
\]
the \emph{universal factorization} of $\alpha$ if both (1) and (2)
are satisfied above. 
\end{defn}

Earlier in Definition \ref{defgenericmorphism} we defined a 1-cell
to be generic when it satisfied a certain strong mixed lifting property.
Translating this definition into the context of a pseudofunctor $T\colon\mathscr{A}\to\mathscr{B}$
results in the below definition. 
\begin{defn}
\label{deflaxgenericcell} Let $\mathscr{A}$ and $\mathscr{B}$ be
bicategories and let $T\colon\mathscr{A}\to\mathscr{B}$ be a pseudofunctor.
Let $\delta\colon X\to TA$ be a generic 1-cell. Then a pair $\left(h,\gamma\right)$
of the form 
\[
\xymatrix@R=1em{ & TA\ar[dd]^{Th}\\
X\ar[ur]^{\delta}\ar[rd]_{z}\dltwocell[0.5]{r}{\gamma} & \;\\
 & TB
}
\]
is a \emph{generic cell} if: 
\begin{enumerate}
\item given any 2-cells $\omega,\tau\colon k\Rightarrow h$ such that 
\[
\xymatrix@R=1em{ & TA\ar[dd]|-{Th}\ar@/^{2pc}/[dd]^{Tk} &  &  & TA\ar[dd]|-{Th}\ar@/^{2pc}/[dd]^{Tk}\\
X\ar[ur]^{\delta}\ar[rd]_{z}\dltwocell[0.5]{r}{\gamma} & \;\ltwocell[0.3]{r}{T\omega} & \ar@{}[]|-{\qquad=} & X\ar[ur]^{\delta}\ar[rd]_{z}\dltwocell[0.5]{r}{\gamma} & \;\ltwocell[0.3]{r}{T\tau} & \;\\
 & TB &  &  & TB
}
\]
we have $\omega=\tau$; 
\item given any other diagram 
\[
\xymatrix@R=1em{ & TA\ar[dd]^{Tk}\\
X\ar[ur]^{\delta}\ar[rd]_{z}\dltwocell[0.5]{r}{\phi} & \;\\
 & TB
}
\]
and $\lambda\colon h\Rightarrow k$ such that 
\[
\xymatrix@R=1em{ & TA\ar[dd]^{Th} &  &  & TA\ar[dd]|-{Tk}\ar@/^{2pc}/[dd]^{Th}\\
X\ar[ur]^{\delta}\ar[rd]_{z}\dltwocell[0.5]{r}{\gamma} & \; & \ar@{}[]|-{=} & X\ar[ur]^{\delta}\ar[rd]_{z}\dltwocell[0.5]{r}{\phi} & \;\ltwocell[0.3]{r}{T\lambda} & \;\\
 & TB &  &  & TB
}
\]
there exists a (necessarily unique) $\lambda^{*}\colon k\Rightarrow h$
such that 
\[
\xymatrix@R=1em{ & TA\ar[dd]^{Tk} &  &  & TA\ar[dd]|-{Th}\ar@/^{2pc}/[dd]^{Tk}\\
X\ar[ur]^{\delta}\ar[rd]_{z}\dltwocell[0.5]{r}{\phi} & \; & \ar@{}[]|-{=} & X\ar[ur]^{\delta}\ar[rd]_{z}\dltwocell[0.5]{r}{\gamma} & \;\ltwocell[0.3]{r}{T\lambda^{*}} & \;\\
 & TB &  &  & TB
}
\]
and $\lambda^{*}\lambda=\textnormal{id}_{h}$. 
\end{enumerate}
From this definition, the following is clear. 
\end{defn}

\begin{cor}
For any universal factorization 
\[
\xymatrix{X\myar{z}{r}\ar[d]_{\delta} & TB\ar[d]^{Tg} & \ar@{}[d]|-{=} & X\myar{z}{r}\ar[d]_{\delta}\utwocell[0.3]{rd}{\gamma} & TB\ar[d]^{Tg}\\
TA\myard{Tf}{r}\utwocell[0.5]{ru}{\alpha} & TC & \; & TA\myard{Tf}{r}\ar[ur]|-{Th} & TC\utwocell[0.35]{ul}{T\nu}
}
\]
it follows that $\left(h,\gamma\right)$ is a generic 2-cell. 
\end{cor}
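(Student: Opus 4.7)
The plan is to verify the two clauses of Definition \ref{deflaxgenericcell} for $(h,\gamma)$ by translating each one into the corresponding clause of Definition \ref{deflaxgenericT}, applied to the universal factorization of $\alpha$ through the generic 1-cell $\delta$. Clause (1) is essentially a tautology: the hypothesis that $\omega,\tau\colon k\Rightarrow h$ induce the same pasting with $\gamma$ is exactly the hypothesis of clause (1) in Definition \ref{deflaxgenericT}, and so the conclusion $\omega=\tau$ transfers at once. This also immediately yields uniqueness of any 2-cell characterised by its pasting with $\gamma$.

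For clause (2), suppose we are given $(k,\phi)$ together with $\lambda\colon h\Rightarrow k$ satisfying $\gamma = \phi\cdot T\lambda_\delta$. The main construction is to build a competing factorization of the same $\alpha$ by pairing $(k,\phi)$ with the 2-cell $\psi := (g\lambda)\cdot\nu\colon f\Rightarrow gk$ obtained by whiskering $\lambda$ by $g$ and composing with $\nu$. Expanding $T\psi_\delta = (Tg)(T\lambda_\delta)\cdot T\nu_\delta$ via the pseudofunctoriality constraints and using $\gamma = \phi\cdot T\lambda_\delta$, one computes $Tg\phi\cdot T\psi_\delta = Tg\gamma\cdot T\nu_\delta = \alpha$, so that $(k,\phi,\psi)$ genuinely presents a factorization of $\alpha$. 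Now apply clause (2) of Definition \ref{deflaxgenericT} to this alternative factorization to obtain a unique $\bar\psi\colon k\Rightarrow h$ with $\phi = \gamma\cdot T\bar\psi_\delta$; set $\lambda^*:=\bar\psi$. To establish the remaining identity $\lambda^*\lambda = \mathrm{id}_h$, paste both $T(\lambda^*\lambda)$ and $T(\mathrm{id}_h)$ with $\gamma$: by the defining equations of $\lambda^*$ and $\lambda$ one has $\gamma\cdot T(\lambda^*\lambda)_\delta = \gamma\cdot T\lambda^*_\delta\cdot T\lambda_\delta = \phi\cdot T\lambda_\delta = \gamma$, so clause (1) of Definition \ref{deflaxgenericT} forces $\lambda^*\lambda = \mathrm{id}_h$. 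Uniqueness of $\lambda^*$ likewise follows from clause (1).

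The principal hurdle is careful bookkeeping of the pseudofunctoriality constraints of $T$ (systematically suppressed in the definitions), specifically when identifying $T(g\lambda\cdot\nu)$ with $(Tg\cdot T\lambda)\cdot T\nu$ and checking that the constructed triple $(k,\phi,\psi)$ presents the same pasting as $\alpha$. These steps are routine applications of the coherence axioms and present no conceptual obstacle, so the crux of the proof really lies in the choice of the auxiliary 2-cell $\psi = (g\lambda)\cdot\nu$ that converts the generic-cell hypothesis into a competing factorization amenable to clause (2) of Definition \ref{deflaxgenericT}.
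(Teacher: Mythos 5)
Your argument is correct: clause (1) of Definition \ref{deflaxgenericcell} coincides with clause (1) of Definition \ref{deflaxgenericT}, and your construction of the competing factorization $(k,\phi,\psi)$ with $\psi=(g\lambda)\cdot\nu$, followed by an appeal to clauses (2) and then (1) to get $\lambda^{*}$ and $\lambda^{*}\lambda=\mathrm{id}_{h}$, is exactly the evident verification. The paper offers no proof at all (it records the corollary as clear from the definition), so your write-up simply makes explicit the intended routine argument.
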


Before proving the main theorem of this section, it is worth defining
the spectrum of a pseudofunctor. This is to be the two-dimensional
analogue of Diers' definition of spectrum of a functor \cite[Definition 3]{DiersDiag}.

It turns out that for a lax familial functor, the reindexing $\mathbf{P}$
necessarily has domain given by the Grothendieck construction of the
spectrum, hence why the spectrum appears in this section and in the
proof of Theorem \ref{laxequivT}. 
\begin{defn}
Let $\mathscr{A}$ and $\mathscr{B}$ be bicategories and let $T\colon\mathscr{A}\to\mathscr{B}$
be a pseudofunctor such that $\mathscr{B}\left(X,T-\right)$ is a
lax conical colimit of representables for every $X\in\mathscr{B}$.\footnote{It makes sense to define the spectrum with just this assumption. However,
in most cases of interest $T$ will satisfy the stronger condition
of being lax familial.} For each $X\in\mathscr{B}$, define $\mathfrak{M}_{X}$ as the category
with objects given by lax-generic morphisms out of $X$ and morphisms
given by representative generic cells between them. We define the
\emph{spectrum} of $T$ to be the pseudofunctor 
\[
\mathbf{Spec}_{T}\colon\mathscr{B}^{\textnormal{op}}\to\mathbf{Cat}
\]
sending an object $X\in\mathscr{B}$ to $\mathfrak{M}_{X}$ and a
morphism $f\colon Y\to X$ in $\mathscr{B}$ to the functor $\mathfrak{M}_{f}\colon\mathfrak{M}_{X}\to\mathfrak{M}_{Y}$
which takes a generic morphism $\delta\colon X\to TA$ to $\delta'\colon Y\to TP$
where $\delta\cdot f\cong Tu\cdot\delta'$ is a chosen generic factorization
of $\delta\cdot f$, and takes a generic 2-cell $\gamma\colon Th\cdot\delta\Rightarrow\sigma$
as on the left below to the generic 2-cell $\overline{\gamma}\colon T\overline{h}\cdot\delta'\Rightarrow\sigma'$
as on the right below
\[
\xymatrix@R=1em{ & TP\ar[r]^{Tu}\ar@{}[d]|-{\cong} & TA\ar[dd]^{Th} &  &  & TP\ar[r]^{Tu}\ar[dd]^{T\overline{h}} & TA\ar[dd]^{Th}\\
Y\ar[r]^{f}\ar[ur]^{\delta'}\ar[rd]_{\sigma'} & X\ar[ur]^{\delta}\ar[rd]_{\sigma}\dltwocell[0.5]{r}{\gamma}\ar@{}[d]|-{\cong} & \; & = & Y\ar[ur]^{\delta'}\ar[rd]_{\sigma'}\dltwocell[0.5]{r}{\overline{\gamma}} &  & \;\\
 & TQ\ar[r]_{Tv} & TB &  &  & TQ\ar[r]_{Tv}\dltwocell[0.5]{ruu}{T\nu} & TB
}
\]
constructed as the universal factorization of the left pasting above. 
\end{defn}

\begin{rem}
When $\mathscr{A}$ has a terminal object the spectrum has an especially
simple form, namely as the functor $\mathscr{B}\left(-,T1\right)\colon\mathscr{B}^{\textnormal{op}}\to\mathbf{Cat}$. 
\end{rem}

Later on we will need to use the the Grothendieck construction of
the spectrum, which has the following relatively simple description.
\begin{lem}
\label{constructspectrum} Let $\mathscr{A}$ and $\mathscr{B}$ be
bicategories and let $T\colon\mathscr{A}\to\mathscr{B}$ be a pseudofunctor
such that $\mathscr{B}\left(X,T-\right)$ is a lax conical colimit
of representables for every $X\in\mathscr{B}$. Then the bicategory
of elements of the spectrum $\mathbf{Spec}_{T}\colon\mathscr{B}^{\textnormal{op}}\to\mathbf{Cat}$
is the bicategory 
\[
\textnormal{el }\mathfrak{M}_{\left(-\right)}\cong\int_{\textnormal{lax}}^{X\in\mathscr{B}}\mathfrak{M}_{X}
\]
consisting of: 
\begin{description}
\item [{Objects}] An object is a pair of the form $\left(X\in\mathscr{B},\delta\colon X\to TA\right)$
where $\delta$ is a generic out of $X$; 
\item [{Morphisms}] A morphism $\left(X\in\mathscr{B},\delta\colon X\to TA\right)\nrightarrow\left(Y\in\mathscr{B},\sigma\colon Y\to TB\right)$
is a morphism $f\colon X\to Y$ in $\mathscr{B}$ and a representative
generic cell $\left(h,\gamma\right)$ as below 
\[
\xymatrix{X\myar{\delta}{r}\ar[d]_{f}\dltwocell[0.5]{rd}{\gamma} & TA\ar[d]^{Th}\\
Y\myard{\sigma}{r} & TB
}
\]
\item [{2-cells}] A 2-cell $\left(f,h,\gamma\right)\Rightarrow\left(g,k,\phi\right)\colon\left(X,\delta\right)\nrightarrow\left(Y,\sigma\right)$
is a 2-cell $\nu\colon f\Rightarrow g$ in $\mathscr{B}$ such that
\[
\xymatrix{X\myar{\delta}{r}\ar@/^{0.7pc}/[d]^{f}\ar@/_{0.7pc}/[d]_{g} & TA\ar[d]^{Th}\dltwocell[0.4]{ld}{\gamma} & \ar@{}[d]|-{=} & X\myar{\delta}{r}\ar@/_{0pc}/[d]_{g}\dltwocell[0.25]{rd}{\phi} & TA\ar@/^{0.7pc}/[d]^{Th}\ar@/_{0.7pc}/[d]_{Tk}\\
Y\myard{\sigma}{r}\ltwocell[0.5]{u}{\nu} & TB & \; & Y\myard{\sigma}{r} & TB\ltwocell[0.5]{u}{T\overline{\nu}}
}
\]
for some (necessarily unique) $\overline{\nu}\colon h\Rightarrow k$. 
\end{description}
Moreover, the cartesian morphisms are precisely those $\left(f,h,\gamma\right)$
such that $\gamma$ is invertible.
\end{lem}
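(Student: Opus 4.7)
The plan is to unfold the Grothendieck construction applied to the contravariant pseudofunctor $\mathbf{Spec}_T\colon\mathscr{B}^{\textnormal{op}}\to\mathbf{Cat}$ and match each type of cell with the description in the lemma. Objects are immediate: a pair consists of $X\in\mathscr{B}$ and $\delta\in\mathfrak{M}_X$, i.e., a lax-generic $\delta\colon X\to TA$. The substance of the proof is in reconciling the standard Grothendieck-construction description of morphisms (which involves the chosen generic factorization $\sigma\cdot f\cong Tu\cdot\sigma'$) with the lemma's description in terms of a single generic cell targeted directly at $\sigma\cdot f$.

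The key repackaging step is as follows. A morphism $(X,\delta)\to(Y,\sigma)$ in the Grothendieck construction consists of $f\colon X\to Y$ in $\mathscr{B}$ together with a morphism $\phi\colon\delta\to\mathbf{Spec}_T(f)(\sigma)=\sigma'$ in the fibre $\mathfrak{M}_X$, i.e. a representative generic cell $(h',\gamma')$ with $\gamma'\colon Th'\cdot\delta\Rightarrow\sigma'$. From this I would construct the lemma's datum by setting $h=u\cdot h'$ and letting $\gamma$ be the pasting of $Tu\cdot\gamma'$ with the chosen isomorphism $Tu\cdot\sigma'\cong\sigma\cdot f$. Conversely, given a generic cell $(h,\gamma)$ with $\gamma\colon Th\cdot\delta\Rightarrow\sigma\cdot f$, I would apply the universal factorization property of $\delta$ (Definition \ref{deflaxgenericT}) to the square formed by $\gamma$ and $\sigma\cdot f\cong Tu\cdot\sigma'$, extracting a representative generic cell $(h',\gamma')\in\mathfrak{M}_X(\delta,\sigma')$ together with a 2-cell $h\Rightarrow u\cdot h'$. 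Uniqueness in the universal property of lax-generics then makes these two assignments mutually inverse on representative classes.

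For 2-cells the modification condition for oplax cocones unfolds directly to the compatibility diagram displayed in the lemma, and the (necessarily unique) $\overline{\nu}\colon h\Rightarrow k$ is precisely the 2-cell produced by the subterminality of the generic cell $(k,\phi)$ within its hom-category. For the cartesian characterisation, a morphism in the Grothendieck construction of a contravariant pseudofunctor is cartesian exactly when its fibre component is invertible; by Lemma \ref{opcartequiv}, invertible morphisms in $\mathfrak{M}_X$ correspond to representatives of opcartesian morphisms in $\textnormal{el }\mathscr{B}(X,T-)$, i.e. to generic cells $(h',\gamma')$ with $\gamma'$ invertible, and under the repackaging this is equivalent to $\gamma$ being invertible since $\gamma$ is built from $\gamma'$ by pasting with invertible 2-cells.

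The main obstacle I anticipate is verifying that the repackaging respects composition; this reduces to a coherence check relating the chosen generic factorizations of $(\sigma\cdot f)\cdot g$ and $\sigma\cdot(f\cdot g)$, which is exactly the binary pseudofunctoriality constraint of $\mathbf{Spec}_T$. Provided one fixes a consistent convention for representative generic cells (and hence of generic factorizations), composition compatibility falls out of the coherence data of the spectrum together with the uniqueness clause in Definition \ref{deflaxgenericcell}.
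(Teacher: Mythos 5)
Your overall route is the paper's own: unfold the contravariant Grothendieck construction of $\mathbf{Spec}_T$, then repackage a fibre morphism, i.e.\ a representative generic cell $\left(h',\gamma'\right)\colon\delta\to\sigma'$ where $Tu\cdot\sigma'\cong\sigma\cdot f$ is the chosen generic factorization, as a single generic cell aimed at $\sigma\cdot f$ by pasting with the opcartesian part, with the converse given by factoring through $\delta$. However, two load-bearing points are missing. First, you never verify that the pasted datum $\left(u\cdot h',\ \textnormal{iso}\circ\left(Tu\cdot\gamma'\right)\right)$ is itself a \emph{generic} cell; without this it is not a valid morphism datum in the lemma's description (so taking its representative makes no sense), and your ``mutually inverse on representative classes'' step also needs it: after factoring a generic $\left(h,\gamma\right)$ through $\delta$ you obtain a comparison 2-cell $h\Rightarrow u\cdot h'$, and to conclude that the representative of the re-pasted cell is $\left(h,\gamma\right)$ you must know this comparison is invertible, which holds precisely because it compares two generic cells. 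The paper secures this by observing that opcartesian cells out of a lax el-generic object are themselves el-generic and that, under the standing hypothesis (via Theorem \ref{p5:laxequiv}), such composites of el-generic cells remain el-generic inside the fixed $\textnormal{el }\mathscr{B}\left(X,T-\right)$.

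Second, your justification of the ``Moreover'' clause fails in one direction: from $\gamma$ invertible you conclude $\gamma'$ invertible ``since $\gamma$ is built from $\gamma'$ by pasting with invertible 2-cells'', but $\gamma$ is obtained from $\gamma'$ by whiskering with the 1-cell $Tu$, i.e.\ by applying the functor $Tu\circ\left(-\right)$ on hom-categories, and whiskering does not reflect invertibility. The correct argument (the one the paper uses implicitly) factors the invertible $\gamma$ through $\delta$ and invokes clause (3) of Definition \ref{deflaxgenericT}, which guarantees that invertible 2-cells admit universal factorizations with invertible components; uniqueness of representative factorizations then identifies these components with $\left(h',\gamma'\right)$, and Lemma \ref{opcartequiv} upgrades the resulting opcartesian cell to an equivalence, hence an invertible fibre morphism. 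Finally, the 2-cell clause does not ``unfold directly'': the Grothendieck condition is an equation in $\mathfrak{M}_X$ involving $\left(\mathbf{Spec}_T\right)_\nu$ evaluated at $\sigma$, which is itself defined by a representative generic factorization, and translating this into the existence of a (necessarily unique) $\overline{\nu}$ satisfying the displayed pasting equation requires an argument in both directions --- this is where most of the paper's proof is spent; your subterminality remark only accounts for the uniqueness of $\overline{\nu}$, not for the equivalence of the two conditions.
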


\begin{proof}
We know, using the formula of Definition \ref{groth} (adjusted to
the contravariant case), that $\int_{\textnormal{lax}}^{X\in\mathscr{B}}\mathfrak{M}_{\left(-\right)}$
is the bicategory with objects pairs $\left(X\in\mathscr{B},m\in\mathfrak{M}_{X}\right)$,
morphisms $\left(X\in\mathscr{B},m\in\mathfrak{M}_{X}\right)\nrightarrow\left(Y\in\mathscr{B},n\in\mathfrak{M}_{Y}\right)$
given by a 1-cell $f\colon X\to Y$ and morphism $\alpha\colon m\to Ff\left(n\right)$
in $\mathfrak{M}_{X}$, and 2-cells $\nu\colon\left(f,\alpha\right)\Rightarrow\left(g,\beta\right)$
those 2-cells $\nu\colon f\Rightarrow g$ such that 
\[
\xymatrix@R=1em{m\ar@/_{1.2pc}/[rr]_{\beta}\myar{\alpha}{r} & Ff\left(n\right)\myar{\left(F\nu\right)_{n}}{r} & Fg\left(n\right)}
\]
commutes. The objects are clearly as desired. Thus a morphism $\left(X\in\mathscr{B},\delta\colon X\to TA\right)\nrightarrow\left(Y\in\mathscr{B},\sigma\colon Y\to TB\right)$
consists of an $f\colon X\to Y$ and an $\alpha\colon\delta\to\mathfrak{M}_{f}\left(\sigma\right)$
in $\mathfrak{M}_{X}$. Hence a morphism is a pair $\left(f,\left(s,\xi\right)\right)$
as below 
\[
\xymatrix@=1em{X\myar{\delta}{rr}\ar[dd]_{f}\myard{\sigma_{f}}{rd} & \;\dltwocell[0.35]{d}{\xi} & TA\ar[dl]^{Ts}\\
 & TH\myar{T\overline{f}}{rd}\\
Y\myard{\sigma}{rr}\ar@{}[ru]|-{\cong} &  & TB
}
\]
where $\left(s,\xi\right)$ is a representative generic cell, and
$T\overline{f}\cdot\sigma_{f}$ is the chosen generic factorization
of $\sigma\cdot f$. Using that generic cells $\left(s,\xi\right)$
remain generic when composed with opcartesian cells $\left(\overline{f},\cong\right)$
(because opcartesian cells are themselves generic), the above diagram
is itself a generic cell, isomorphic to a unique representative generic
cell 
\[
\xymatrix{X\myar{\delta}{r}\ar[d]_{f}\dltwocell[0.5]{rd}{\gamma} & TA\ar[d]^{Th}\\
Y\myard{\sigma}{r} & TB
}
\]
Conversely, one may form the representative generic factorization
of $\gamma$ 
\[
\xymatrix@=1em{X\myar{\delta}{rr}\ar[dd]_{\sigma_{f}} & \; & TA\ar[dd]^{Th}\ar[lldd]|-{Ts}\\
 & \ltwocell[0.4]{ul}{\xi}\\
TH\myard{T\overline{f}}{rr} &  & TB\ltwocell[0.65]{ul}{T\zeta}
}
\]
to recover $\left(s,\xi\right)$ (note that $\zeta$ is invertible
as genericity of $\left(s,\xi\right)$ is preserved by $\left(\overline{f},\textnormal{id}\right)$
and $\gamma$ is generic). That the assignment $\left(s,\xi\right)\mapsto\left(h,\gamma\right)$
defines a bijection is a consequence of the fact that any 2-cells
factors through a unique representative generic 2-cell (once a choice
of a generic 1-cell factoring each general 1-cell has been made).

It is also worth noting that the opcartesian morphisms, corresponding
to the case where $\left(s,\xi\right)$ is an equivalence (meaning
$s$ is an equivalence and $\xi$ is invertible), are those squares
where $\gamma$ is invertible. 

Finally, a 2-cell $\nu\colon\left(f,s,\xi\right)\Rightarrow\left(g,u,\theta\right)$
consists of a 2-cell $\nu\colon f\Rightarrow g$ such that 
\begin{equation}
\xymatrix@R=1em{\delta\ar@/_{1.2pc}/[rr]_{\left(u,\theta\right)}\myar{\left(s,\xi\right)}{r} & \sigma_{f}\myar{\left(\mathfrak{M}_{\nu}\right)_{\sigma}}{r} & \sigma_{g}}
\label{mv}
\end{equation}
commutes, where $\left(\mathfrak{M}_{\nu}\right)_{\sigma}$ is given
by the representative generic factorization, denoted by the pair $\left(m,\varphi\right)$,
as in the diagram below 
\[
\xymatrix@R=1em{ & TH\ar@/^{0.7pc}/[rd]^{T\overline{f}}\ar@{}[d]|-{\cong} &  &  &  & TT\ar@/^{0.7pc}/[rd]^{T\overline{f}}\ar[dd]|-{Tm}\\
X\ar@/^{0.7pc}/[r]^{f}\ar@/_{0.7pc}/[r]_{g}\dtwocell[0.4]{r}{\nu}\ar@/^{0.7pc}/[ur]^{\sigma_{f}}\ar@/_{0.7pc}/[rd]_{\sigma_{g}} & Y\ar[r]^{\sigma}\ar@{}[d]|-{\cong} & TB & = & X\dtwocell[0.4]{r}{\varphi}\ar@/^{0.7pc}/[ur]^{\sigma_{f}}\ar@/_{0.7pc}/[rd]_{\sigma_{g}} & \dtwocell[0.4]{r}{T\lambda} & TB\\
 & TS\ar@/_{0.7pc}/[ru]_{T\overline{g}} &  &  &  & TS\ar@/_{0.7pc}/[ru]_{T\overline{g}}
}
\]
Hence given such a $\nu$ we have 
\[
\xymatrix@=1.5em{X\myar{\delta}{rr}\ar[dd]_{\sigma_{g}} & \; & TA\ar[dd]^{Th}\ar[ldld]|-{Tu} &  & X\myar{\delta}{rr}\ar[dd]_{\sigma_{g}}\ar[rd]|-{\sigma_{f}} & \; & TA\ar[dd]^{Th}\ar[ld]|-{Ts}\\
\dltwocell[0.4]{ru}{\theta} & \dltwocell[0.4]{rd}{T\tau} & \; & = & \dltwocell[0.4]{r}{\varphi} & TH\ar[rd]|-{T\overline{f}}\ar[ld]|-{Tm}\dltwocell[0.7]{u}{\xi}\dltwocell[0.6]{d}{T\lambda}\dltwocell[0.6]{r}{T\zeta} & \;\\
TS\myard{T\overline{g}}{rr} & \; & TB &  & TS\myard{T\overline{g}}{rr} & \; & TB
}
\]
for some (necessarily unique) $\tau\colon h\Rightarrow\overline{g}\cdot u$.
Moreover, given a diagram as above we can take the representative
generic factorization to recover (\ref{mv}). 
\end{proof}
We can now apply Theorem \ref{p5:laxequiv} to the case where $F\colon\mathscr{A}\to\mathbf{Cat}$
is of the form $\mathscr{B}\left(X,T-\right)$ for a pseudofunctor
$T\colon\mathscr{A}\to\mathscr{B}$ to prove the following theorem. 
\begin{thm}
\label{laxequivT} Let $\mathscr{A}$ and $\mathscr{B}$ be bicategories
and let $T\colon\mathscr{A}\to\mathscr{B}$ be a pseudofunctor. Then
the following are equivalent: 
\begin{enumerate}
\item the pseudofunctor $T\colon\mathscr{A}\to\mathscr{B}$ is lax familial; 
\item the following conditions hold: 
\begin{enumerate}
\item for every object $X\in\mathscr{A}$ and 1-cell $y\colon X\to TC$
in $\mathscr{B}$, there exists a lax-generic morphism $\delta\colon X\to TA$
and 1-cell $f\colon A\to C$ such that $Tf\cdot\delta\cong y$; 
\item for any triple of lax-generic morphisms $\delta,$ $\sigma$ and $\omega$,
and pair of generic cells $\left(h,\theta\right)$ and $\left(k,\phi\right)$
as below 
\begin{equation}
\xymatrix{X\ar[d]_{\delta}\myar{f}{r}\urtwocell[0.5]{rd}{\theta} & Y\ar[d]|-{\sigma}\myar{g}{r}\urtwocell[0.5]{rd}{\phi} & Z\ar[d]^{\omega}\\
TA\myard{Th}{r} & TB\myard{Tk}{r} & TC
}
\label{dbsquare}
\end{equation}
the above pasting $\left(kh,\phi f\cdot\theta\right)$ is a generic
cell.\footnote{Suppressing pseudofunctoriality constraints of $T$.} 
\end{enumerate}
\end{enumerate}
\end{thm}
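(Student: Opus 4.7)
The plan is to apply Theorem~\ref{p5:laxequiv} pointwise to the pseudofunctors $F_X := \mathscr{B}(X, T-)\colon \mathscr{A}\to\mathbf{Cat}$ and then assemble the resulting data into the pseudofunctor $\mathbf{P}\colon \int_{\textnormal{lax}}^{X\in\mathscr{B}}\mathfrak{M}_X\to\mathscr{A}$ required by Definition~\ref{deflaxfamilial}. The first step is setting up a dictionary. An object of $\textnormal{el }F_X$ is a 1-cell $\delta\colon X\to TA$, and a morphism $(h,\gamma)\colon(A,\delta)\nrightarrow(B,z)$ is a pair consisting of $h\colon A\to B$ and a 2-cell $\gamma\colon Th\cdot\delta\Rightarrow z$. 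Unfolding Definition~\ref{deflaxgenericobject} in this setting reproduces Definition~\ref{deflaxgenericT} verbatim: the strong mixed lifting property for $(f,\alpha)$ through $(g,\beta)$ with $\beta$ invertible is exactly the universal factorization property defining lax-genericity of $\delta$, and the invertibility clause in part (2) of Definition~\ref{deflaxgenericobject} becomes clause (3) of Definition~\ref{deflaxgenericT}. Likewise, an el-generic morphism out of $(A,\delta)$ translates to a generic cell in the sense of Definition~\ref{deflaxgenericcell}.

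Via this dictionary, condition (2a) of Theorem~\ref{p5:laxequiv} at a fixed $X$ becomes condition (2a) of the present theorem, and the closure of el-generic morphisms under composition between lax el-generics becomes the statement that the horizontal pasting (\ref{dbsquare}) of two generic cells is a generic cell, i.e.\ condition (2b). For the implication $(1)\Rightarrow(2)$, the existence of an equivalence $\mathscr{B}(X,T-)\simeq \int_{\textnormal{lax}}^{m\in\mathfrak{M}_X}\mathscr{A}(P_m^X,-)$ at each $X$ immediately feeds Theorem~\ref{p5:laxequiv} to yield both (2a) and (2b).

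For the harder direction $(2)\Rightarrow(1)$, applying Theorem~\ref{p5:laxequiv} pointwise produces equivalences
\[
\mathscr{B}(X,T-)\simeq \int_{\textnormal{lax}}^{m\in\mathfrak{M}^{F_X}}\mathscr{A}(P_m^X,-)
\]
for each $X$. It remains to upgrade the family $X\mapsto\mathfrak{M}^{F_X}$ to the pseudofunctor $\mathbf{Spec}_T\colon\mathscr{B}^{\textnormal{op}}\to\mathbf{Cat}$ by choosing, for each $f\colon Y\to X$ and generic $\delta$, a representative generic factorization of $\delta\cdot f$, and to produce $\mathbf{P}$ using Lemma~\ref{constructspectrum}: send $(X,\delta\colon X\to TA)$ to $A$, send a morphism $(f,h,\gamma)$ to $h$, and send a 2-cell $\nu$ to the 2-cell $\overline{\nu}\colon h\Rightarrow k$ it uniquely determines. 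Pseudofunctoriality of $\mathbf{P}$ then reduces to the coherence of composed generic factorizations, which follows from the uniqueness clause (1) of Definition~\ref{deflaxgenericT} together with closure under composition (condition (2b)).

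The main obstacle I anticipate is precisely this assembly step: showing that $\mathbf{P}$ is genuinely a pseudofunctor on the Grothendieck construction of $\mathbf{Spec}_T$, rather than merely a normal lax functor. Condition (2b) is the critical input, since it guarantees that the paste of two generic cells is generic, and hence isomorphic to a unique representative; this isomorphism supplies the binary constraint 2-cells for $\mathbf{P}$. The coherence axioms for these constraints, and the verification that the spectrum $\mathbf{Spec}_T$ respects identities and composition in $\mathscr{B}$, will in each case follow by invoking the uniqueness half of the universal factorization property (Definition~\ref{deflaxgenericT}(1,2)) applied to two \emph{a priori} distinct generic factorizations of the same pasting diagram. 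Everything else in the proof is a bookkeeping translation between the two languages set up by the dictionary of the first paragraph.
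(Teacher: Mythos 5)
Your dictionary in the first two paragraphs contains a misidentification that creates a genuine gap. Condition (2)(b) of Theorem \ref{p5:laxequiv}, applied to $F_X=\mathscr{B}(X,T-)$, only concerns composition of el-generic morphisms \emph{between lax el-generic objects inside the single bicategory} $\textnormal{el }F_X$; translated back, this is precisely the special case of diagram (\ref{dbsquare}) in which $f$ and $g$ are identities on $X$. The full condition (2)(b) of the present theorem, with arbitrary $f\colon X\to Y$ and $g\colon Y\to Z$, is strictly stronger: the cell $(h,\theta)$ is a morphism of $\textnormal{el }F_X$ with target $(B,\sigma\cdot f)$, which is in general \emph{not} a lax el-generic object, so the pasting statement is not an instance of Theorem \ref{p5:laxequiv}(2)(b) at all. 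Consequently your claim in $(1)\Rightarrow(2)$ that the pointwise equivalences ``immediately feed Theorem \ref{p5:laxequiv} to yield both (2a) and (2b)'' fails for the general-$f,g$ case of (2b). Indeed the paper's remark following Definition \ref{deflaxfamilial} makes exactly this point: pointwise lax-conical-colimit-of-representables only produces a \emph{normal lax} functor $\mathbf{P}$, and the extra content of lax familiality is the pseudofunctoriality of $\mathbf{P}$. The correct argument for $(1)\Rightarrow(2)$ must therefore invoke that $\mathbf{P}$ is a pseudofunctor: its lax functoriality constraints are built by factoring pastings of shape (\ref{dbsquare}) through a generic, and their invertibility is what forces the full (2)(b). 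Your proposal never uses the pseudofunctoriality of $\mathbf{P}$ in this direction, so that implication is not established.

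Your treatment of $(2)\Rightarrow(1)$ is, by contrast, essentially the paper's argument: apply Theorem \ref{p5:laxequiv} pointwise (which only needs (2b) with $f,g$ identities), assemble the spectrum and the reindexing via Lemma \ref{constructspectrum}, and use the full strength of (2b) to show the lax constraints of $\mathbf{P}$ are invertible, upgrading it from a normal lax functor to a pseudofunctor. If you repair the dictionary so that the two versions of the composition condition are kept distinct, and rewrite $(1)\Rightarrow(2)$ to extract the general case of (2b) from the invertible constraint cells of $\mathbf{P}$, the proof will match the paper's.
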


\begin{proof}
$\left(1\right)\Rightarrow\left(2\right)\colon$ Supposing that $T$
is lax familial, it follows that each $\mathscr{B}\left(X,T-\right)$
is a lax conical colimit of representables. By Theorem \ref{p5:laxequiv},
we have (2)(a), as well as 2(b) when $f$ and $g$ are both the identity
at $X$. To get the full version of (2)(b) we use that 
\[
\mathbf{P}\colon\int_{\textnormal{lax}}^{X\in\mathscr{B}}\mathfrak{M}_{X}\to\mathscr{A}
\]
is a pseudofunctor, where we have assumed without loss of generality
that each $\mathfrak{M}_{X}$ is the category of generic morphisms
out of $X$ and representative cells, using Remark \ref{Munique}.
Indeed, $\int_{\textnormal{lax}}^{X\in\mathscr{B}}\mathfrak{M}_{X}$
is the bicategory with objects pairs $\left(X,\delta\colon X\to TA\right)$
and morphisms $\left(X,\delta\colon X\to TA\right)\nrightarrow\left(Y,\sigma\colon Y\to TB\right)$
given by triples $\left(f,h,\theta\right)$ as below 
\[
\xymatrix{X\ar[d]_{\delta}\myar{f}{r}\urtwocell[0.5]{rd}{\theta} & Y\ar[d]|-{\sigma}\\
TA\myard{Th}{r} & TB
}
\]
such that $\left(h,\theta\right)$ is a generic cell. As the lax functoriality
constraints of $\mathbf{P}$ are given by factoring diagrams such
as (\ref{dbsquare}) though a generic, the invertibility of these
lax constraints of $\mathbf{P}$ forces (2)(b).

$\left(2\right)\Rightarrow\left(1\right)\colon$ Applying Theorem
\ref{p5:laxequiv} to the conditions 2(a) and 2(b) (only needing the
case when $f$ and $g$ are identities at $X$), it follows that we
may write 
\[
\mathscr{B}\left(X,T-\right)\simeq\int_{\textnormal{lax}}^{m\in\mathfrak{M}_{X}}\mathscr{A}\left(P_{m}^{X},-\right)
\]
where $\mathfrak{M}_{X}$ is the category of generic morphisms out
of $X$ and representative generic cells between them. From this,
we recover the spectrum $\mathbf{Spec}_{T}\colon\mathscr{B}^{\textnormal{op}}\to\mathbf{Cat}$
taking each $X$ to $\mathfrak{M}_{X}$. Also, we again have the canonical
normal lax functor 
\[
\mathbf{P}\colon\int_{\textnormal{lax}}^{X\in\mathscr{B}}\mathfrak{M}_{X}\to\mathscr{A}
\]
defined as in the previous implication $\left(1\right)\Rightarrow\left(2\right)$.
Using the full version of (2)(b), meaning we are no longer just using
the case when $f$ and $g$ are identities at $X$, forces this to
be a pseudofunctor (not just a normal lax functor) as required. 
\end{proof}
Under the conditions of this theorem, we also have a notion of generic
factorizations on 2-cells, in a sense we now describe. 
\begin{rem}
Suppose $T$ is lax familial, $\delta$ and $\sigma$ are generic
objects, and consider a 2-cell $\alpha\colon Tf\cdot\delta\Rightarrow Tg\cdot\sigma$.
Then $\alpha$ has a lax generic factorization 
\[
\xymatrix@R=1em{ & TA\ar@/^{0.4pc}/[rd]^{Tf}\ar@{}[dd]|-{\Downarrow\alpha} &  &  &  & TA\ar@/^{0.4pc}/[rd]^{Tf}\ar[dd]|-{Th}\\
X\ar@/^{0.4pc}/[ur]^{\delta}\ar@/_{0.4pc}/[dr]_{\sigma} &  & TC & = & X\ar@/^{0.4pc}/[ur]^{\delta}\ar@/_{0.4pc}/[dr]_{\sigma}\ar@{}[r]|-{\Downarrow\gamma} & \;\ar@{}[r]|-{\Downarrow T\nu} & TC\\
 & TB\ar@/_{0.4pc}/[ru]_{Tg} &  &  &  & TB\ar@/_{0.4pc}/[ru]_{Tg}
}
\]
Also note that any map $k\colon X\to TC$ can be factored as $T\overline{k}\cdot\xi$
for some generic $\xi$ and morphism $\overline{k}$, and so when
$T$ is surjective on objects we have a generic factorization of every
1-cell and 2-cell in the bicategory $\mathscr{B}$.
\end{rem}

\section{Comparing to Weber's familial 2-functors\label{webersection}}

The purpose of this section is to compare our definition of a lax
familial 2-functor $T\colon\mathscr{A}\to\mathscr{B}$ between 2-categories
(meaning Definition \ref{deflaxfamilial} specialized to 2-categories
and 2-functors), with Weber's definition of familial 2-functor (which
requires that $\mathscr{A}$ has a terminal object). It turns out
that these two definitions are essentially equivalent. Note also that
Weber's definition assumes some ``strictness conditions'' (such
as identity 2-cells factoring into identity 2-cells) which are natural
conditions on 2-functors, but arguably less natural in the case of
pseudofunctors.

In one dimension, a functor $T\colon\mathcal{A}\to\mathcal{B}$ (where
$\mathcal{A}$ has a terminal object) is said to be a parametric right
adjoint (or a local right adjoint) when the canonical functor $T_{1}\colon\mathcal{A}\cong\mathcal{A}/\mathbf{1}\rightarrow\mathcal{B}/T\mathbf{1}$
is a right adjoint \cite{StreetPetit}. The following is what Weber
refers to as the ``naive'' 2-categorical analogue of parametric
right adjoint \cite{Webfam}.
\begin{defn}
Suppose $\mathscr{A}$ and $\mathscr{B}$ are 2-categories, and that
$\mathscr{A}$ has a terminal object. We say a 2-functor $T\colon\mathscr{A}\to\mathscr{B}$
is a \emph{naive parametric right adjoint} if every canonical functor
(on the 2-slices) $T_{1}\colon\mathscr{A}\cong\mathscr{A}/\mathbf{1}\rightarrow\mathscr{B}/T\mathbf{1}$
is a right 2-adjoint.
\end{defn}

We now recall the notion of generic morphism corresponding to this
``naive'' 2-categorical analogue of parametric right adjoints \cite{Webfam}. 
\begin{defn}
Suppose $\mathscr{A}$ and $\mathscr{B}$ are 2-categories. Given
a 2-functor $T\colon\mathscr{A}\to\mathscr{B}$ we say a morphism
$x\colon X\to TA$ is \emph{naive-generic} if: 
\begin{enumerate}
\item for any commuting square as on the left below 
\[
\xymatrix{X\myar{z}{r}\ar[d]_{x} & TB\ar[d]^{Tg} &  &  & X\myar{z}{r}\ar[d]_{x} & TB\ar[d]^{Tg}\\
TA\myard{Tf}{r} & TC & \; &  & TA\myard{Tf}{r}\ar[ur]|-{Th} & TC
}
\]
there exists a unique $h\colon A\to B$ such that $Th\cdot x=z$ and
$f=gh$; 
\item for two commuting diagrams 
\[
\xymatrix{X\myar{z_{1}}{r}\ar[d]_{x} & TB\ar[d]^{Tg} &  &  & X\myar{z_{2}}{r}\ar[d]_{x} & TB\ar[d]^{Tg}\\
TA\myard{Tf}{r}\ar[ur]|-{Th_{1}} & TC &  &  & TA\myard{Tf}{r}\ar[ur]|-{Th_{2}} & TC
}
\]
the 2-cells $\theta\colon z_{1}\Rightarrow z_{2}$ such that $Tg\cdot\theta=\textnormal{id}$
bijectively correspond to 2-cells $\overline{\theta}\colon h_{1}\Rightarrow h_{2}$
such that $T\left(\overline{\theta}\right)\cdot x=\theta$ and $g\cdot\overline{\theta}=\textnormal{id}$. 
\end{enumerate}
\end{defn}

From this one can prove the following expected result \cite{Webfam}.
\begin{prop}
\cite{Webfam} Suppose $\mathscr{A}$ and $\mathscr{B}$ are 2-categories,
and that $\mathscr{A}$ has a terminal object. Then a 2-functor $T\colon\mathscr{A}\to\mathscr{B}$
is a naive parametric right adjoint if and only if every $f\colon X\to TA$
factors as $T\overline{f}\cdot x$ for a naive-generic morphism $x$. 
\end{prop}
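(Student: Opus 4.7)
The plan is to pivot between both descriptions via the slice $\mathscr{B}/T\mathbf{1}$. Under the equivalence $\mathscr{A}\simeq\mathscr{A}/\mathbf{1}$, the 2-functor $T_{1}$ sends $A\mapsto\left(TA,T!_{A}\right)$, so a morphism $\left(X,u\right)\to T_{1}B$ in $\mathscr{B}/T\mathbf{1}$ is exactly a 1-cell $z\colon X\to TB$ with $T!_{B}\cdot z=u$, and a 2-cell there is a 2-cell $\theta\colon z_{1}\Rightarrow z_{2}$ of $\mathscr{B}$ with $T!_{B}\cdot\theta=\textnormal{id}$. The key observation is that, once one works in this slice, the side-condition ``$g\cdot h=f$'' appearing in clause~(1) of naive-genericity (and its 2-cell analogue in (2)) becomes automatic from the terminality of $\mathbf{1}$, so naive-genericity of a morphism $x\colon X\to TA$ translates directly into the universal property of a unit $\eta_{(X,u)}$ for $T_{1}$ at $(X,u)$ with $u=T!_{A}\cdot x$.

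\emph{Necessity.} Suppose $L\dashv T_{1}$ with unit $\eta$. Given $f\colon X\to TA$, set $u:=T!_{A}\cdot f$; then $f$ becomes a morphism $(X,u)\to T_{1}A$ in the slice, whose transpose $\overline{f}\colon L(X,u)\to A$ satisfies $f=T\overline{f}\cdot x$ for $x:=\eta_{(X,u)}\colon X\to TL(X,u)$. To verify naive-genericity of $x$, observe first that $T!_{L(X,u)}\cdot x=u$ since $\eta_{(X,u)}$ is a morphism in the slice. Given a commuting square as in clause~(1), postcomposing with $T!_{C}$ yields $T!_{B}\cdot z=T!_{L(X,u)}\cdot x=u$, so $z$ defines a morphism $(X,u)\to T_{1}B$; its transpose is the unique $h\colon L(X,u)\to B$ with $Th\cdot x=z$. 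The identity $g\cdot h$ equals the induced morphism $L(X,u)\to C$ because $T(g\cdot h)\cdot x=Tg\cdot z=Tf\cdot x$ and transposition along $\eta$ is injective. Clause~(2) is checked analogously using the 2-dimensional universal property of $\eta$.

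\emph{Sufficiency.} Conversely, for each $u\colon X\to T\mathbf{1}$, choose a naive-generic factorization $u=T\overline{u}\cdot x$; terminality of $\mathbf{1}$ forces $\overline{u}=!_{P}$ where $P:=\textnormal{cod}(\overline{u})$, so we may define $L(X,u):=P$ with unit $\eta_{(X,u)}:=x$. The required universal property reads: any $z\colon X\to TB$ with $T!_{B}\cdot z=u=T!_{P}\cdot x$ factors uniquely as $z=Th\cdot x$ for some $h\colon P\to B$; this is precisely clause~(1) of naive-genericity applied to the square
\[
\xymatrix{X\ar[r]^{z}\ar[d]_{x} & TB\ar[d]^{T!_{B}}\\
TP\ar[r]_{T!_{P}} & T\mathbf{1}
}
\]
the redundant condition $!_{B}\cdot h=!_{P}$ being free by terminality. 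Clause~(2) of naive-genericity supplies the corresponding 2-cell bijection, and extending $L$ to arbitrary morphisms of $\mathscr{B}/T\mathbf{1}$ is routine from uniqueness in~(1). The only obstacle I would anticipate---handling the clause $g\cdot h=f$ that distinguishes naive-genericity from a mere unique lifting property---dissolves exactly because it is the datum pinning things to the slice $\mathscr{B}/T\mathbf{1}$ instead of $\mathscr{B}$; once one has moved into the slice, the correspondence is essentially tautological.
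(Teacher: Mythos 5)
Your proposal is correct and is essentially the standard argument: the paper itself gives no proof of this proposition (it is quoted from Weber's work), and the proof there is exactly your slice-transposition argument, translating clause (1) and clause (2) of naive-genericity into the one- and two-dimensional universal property of a unit for $T_{1}\colon\mathscr{A}\cong\mathscr{A}/\mathbf{1}\to\mathscr{B}/T\mathbf{1}$. One small caveat: the preamble's claim that the side-condition $g\cdot h=f$ is ``automatic from terminality'' is literally true only in the sufficiency direction (where the cospan target is $\mathbf{1}$); in the necessity direction it must be derived, which your Necessity paragraph does correctly via injectivity of transposition along $\eta$, so the argument as a whole stands.
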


Weber's actual definition of familial 2-functors (which we will soon
recall) requires certain maps in a 2-category to be fibrations \cite{Webfam}.
Thus we will need to recall the definition of fibration in a 2-category
$\mathscr{B}$. Note that when $\mathscr{B}$ is finitely complete
there are other equivalent characterizations of fibrations \cite{fibbicaty}. 
\begin{defn}
\label{defstreetfib}We say a morphism $p\colon E\to B$ in a 2-category
$\mathscr{B}$ is a \emph{fibration} if: 
\begin{enumerate}
\item for every $X\in\mathscr{B}$, the functor $\mathscr{B}\left(X,p\right)\colon\mathscr{B}\left(X,E\right)\to\mathscr{B}\left(X,B\right)$
is a fibration; 
\item for every $f\colon X\to Y$ in $\mathscr{B}$, the functor $\mathscr{B}\left(f,E\right)\colon\mathscr{B}\left(Y,E\right)\to\mathscr{B}\left(X,E\right)$
preserves cartesian morphisms. 
\end{enumerate}
If we have a choice of cartesian lifts which strictly respects composition
and identities we say the fibration \emph{splits}. 
\end{defn}

We now have the required background to define familial 2-functors
in the sense of Weber.
\begin{defn}
Suppose $\mathscr{A}$ and $\mathscr{B}$ are 2-categories and that
$\mathscr{A}$ has a terminal object. We say a 2-functor $T\colon\mathscr{A}\to\mathscr{B}$
is\emph{ Weber-familial} if 
\begin{enumerate}
\item $T$ is a naive parametric right adjoint; 
\item for every $A\in\mathscr{A}$, and unique $t_{A}\colon A\to1$ in $\mathscr{A}$,
the morphism $Tt_{A}\colon TA\to T1$ is a split fibration in $\mathscr{B}$. 
\end{enumerate}
\end{defn}

The following is Weber's analogue of lax-generic morphisms. 
\begin{defn}
Suppose $\mathscr{A}$ and $\mathscr{B}$ are 2-categories, and that
$\mathscr{A}$ has a terminal object. Given a 2-functor $T\colon\mathscr{A}\to\mathscr{B}$
for which each $Tt_{A}\colon TA\to T1$ is a split fibration, we say
a morphism $x\colon X\to TA$ is \emph{Weber-lax-generic} if for any
2-cell $\alpha$ as on the left below,

\[
\xymatrix{X\myar{z}{r}\ar[d]_{x} & TB\ar[d]^{Tg} & \ar@{}[d]|-{=} & X\myar{z}{r}\ar[d]_{x}\utwocell[0.3]{rd}{\gamma} & TB\ar[d]^{Tg}\\
TA\myard{Tf}{r}\utwocell[0.5]{ru}{\alpha} & TC & \; & TA\myard{Tf}{r}\ar[ur]|-{Th} & TC\utwocell[0.35]{ul}{T\nu}
}
\]
there exists a unique factorization $\left(h,\gamma,\nu\right)$ as
above such that $\left(h,\gamma\right)$ is chosen\footnote{Recall part of the data of a split fibration is a choice of coherent
cartesian lifts.} $Tt_{B}\colon TB\to T1$ cartesian.\footnote{This definition of lax-generics has the downside that it assumes some
of the conditions for a 2-functor being familial for it to make sense
(namely that each $Tt_{A}\colon TA\to T1$ is a split fibration),
thus not allowing for a theorem describing an equivalence between
a 2-functor being familial and admitting lax-generic factorizations.}
\end{defn}

The following lemma shows that for Weber-familial 2-functors $T$,
the lax-generics of both our sense and Weber's coincide, and our generic
2-cells can equivalently be characterized as certain cartesian morphisms. 
\begin{lem}
\label{famlemma} Suppose $\mathscr{A}$ and $\mathscr{B}$ are 2-categories
and that $\mathscr{A}$ has a terminal object. Let $T\colon\mathscr{A}\to\mathscr{B}$
be a Weber-familial 2-functor. Define $\mathfrak{M}$ as the category
with objects given by chosen naive-generics $\delta\colon X\to TA$,\footnote{Here ``chosen'' means that it is to be identified with another naive-generic
$\sigma\colon X\to TB$ if there exists a pair $\left(h,\gamma\right)$
as in \eqref{hgammaref} with $h$ invertible and $\gamma$ an identity;
thus it is a choice of a representative of an equivalence class of
naive-generics.} and morphisms given by pairs $\left(h,\gamma\right)$ 
\begin{equation}
\xymatrix@R=1em{ & TA\ar[dd]^{Th}\\
X\ar[ur]^{\delta}\ar[rd]_{\sigma}\dltwocell[0.5]{r}{\gamma} & \;\\
 & TB
}
\label{hgammaref}
\end{equation}
where $\gamma$ is chosen $Tt_{B}\colon TB\to T1$ cartesian. Then: 
\begin{enumerate}
\item for every $X\in\mathscr{B}$ we have isomorphisms 
\[
\mathscr{B}\left(X,T-\right)\cong\int_{\textnormal{lax}}^{m\in\mathfrak{M}}\mathscr{A}\left(P_{m},-\right);
\]
\item a map $\delta\colon X\to TA$ in $\mathscr{B}$ is naive-generic if
and only if it is strict\footnote{By strict we mean identity 2-cells universally factor into identity
2-cells.} lax-generic; 
\item a 2-cell in $\mathscr{B}$ as below 
\[
\xymatrix@R=1em{ & TA\ar[dd]^{Th}\\
X\ar[ur]^{\delta}\ar[rd]_{z}\dltwocell[0.5]{r}{\gamma} & \;\\
 & TB
}
\]
is generic if and only if it is $Tt_{B}\colon TB\to T1$ cartesian. 
\end{enumerate}
\end{lem}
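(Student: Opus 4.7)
The natural order is to prove (3) first, then (2), and finally to deduce (1) from Theorem~\ref{laxequivT}. The unifying observation is that any whiskered 2-cell of the form $T\omega\cdot\delta$ for $\omega\colon k\Rightarrow h$ in $\mathscr{A}(A,B)$ automatically projects to the identity under $Tt_B$, since $t_B\omega$ lives in $\mathscr{A}(A,1)$ between morphisms to the terminal object and hence is $\textnormal{id}$. Such 2-cells coming from $\mathscr{A}$ are therefore vertical over $T1$, and cartesian cancellation under $Tt_B$ interacts cleanly with the 2-cell half of naive-genericity.

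For (3), direction $(\Leftarrow)$ is a direct verification of the two clauses of Definition~\ref{deflaxgenericcell}. Faithfulness follows by cartesian cancellation of $\gamma$ (reducing the hypothesis to $T\omega\cdot\delta=T\tau\cdot\delta$, where both sides project to identities) followed by the bijection in naive-genericity~(2) to obtain $\omega=\tau$. The lifting clause uses cartesianness of $\gamma$ to produce a unique $\mu\colon Tk\cdot\delta\Rightarrow Th\cdot\delta$, vertical over $T1$, with $\gamma\cdot\mu=\phi$; naive-genericity~(2) then presents $\mu$ as $T\lambda^*\cdot\delta$, and $\lambda^*\lambda=\textnormal{id}_h$ drops out of the same cancellation applied to $\gamma\cdot T(\lambda^*\lambda)\cdot\delta=\gamma=\gamma\cdot T(\textnormal{id}_h)\cdot\delta$. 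The converse $(\Rightarrow)$ is the subtle step: given a generic $\gamma\colon Th\cdot\delta\Rightarrow z$, I form the chosen $Tt_B$-cartesian lift $\gamma'\colon Th'\cdot\delta\Rightarrow z$ of $Tt_B\cdot\gamma$, with the domain of this form by naive-genericity~(1). Cartesianness of $\gamma'$ yields $\lambda\colon h\Rightarrow h'$ with $\gamma'\cdot(T\lambda\cdot\delta)=\gamma$, and genericity of $\gamma$ yields a section $\lambda^*\colon h'\Rightarrow h$ with $\gamma\cdot(T\lambda^*\cdot\delta)=\gamma'$ and $\lambda^*\lambda=\textnormal{id}_h$. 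The remaining step is to show $\lambda\lambda^*=\textnormal{id}_{h'}$: both $T(\lambda\lambda^*)\cdot\delta$ and $\textnormal{id}$ postcompose with $\gamma'$ to give $\gamma'$ and have identity $Tt_B$-projection, so cartesian cancellation of $\gamma'$ forces them to agree, and naive-genericity~(2) then forces $\lambda\lambda^*=\textnormal{id}_{h'}$. Thus $\lambda$ is invertible, $\gamma\cong\gamma'$, and $\gamma$ itself is cartesian.

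Part (2) builds on (3). For $(\Rightarrow)$, given $\alpha\colon Tf\cdot\delta\Rightarrow Tg\cdot z$, I project along $Tt_C$ (using $t_Cf=t_A$ and $t_Cg=t_B$) to obtain $Tt_C\cdot\alpha\colon Tt_A\cdot\delta\Rightarrow Tt_B\cdot z$, take the chosen $Tt_B$-cartesian lift $\gamma\colon w\Rightarrow z$, and write $w=Th\cdot\delta$ by naive-genericity~(1); part (3) already certifies this $\gamma$ as a generic cell. The 2-cell $\nu\colon f\Rightarrow gh$ is extracted by viewing $\alpha$ and $Tg\cdot\gamma$ inside the 2-adjunction underlying the naive parametric right adjoint condition: both have $Tt_C$-projection equal to $Tt_C\cdot\alpha$, so their difference is captured by a 2-cell $\beta\colon Tf\cdot\delta\Rightarrow Tgh\cdot\delta$ vertical over $T1$ with $(Tg\cdot\gamma)\cdot\beta=\alpha$, which naive-genericity~(2) lifts to the desired $\nu$ with $T\nu\cdot\delta=\beta$. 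Strictness (identity $\alpha$ producing identities $\gamma$ and $\nu$) is automatic, since chosen cartesian lifts of identities are identities. The converse $(\Leftarrow)$ is immediate from the defining universal properties. Finally, (1) follows from Theorem~\ref{laxequivT} combined with (2) and (3): condition 2(a) of that theorem is Weber's generic factorization property, and 2(b) holds because composites of $Tt_B$-cartesian cells are cartesian, hence generic by (3); the chosen representatives defining $\mathfrak{M}$ upgrade the equivalence of Theorem~\ref{laxequivT} to an isomorphism on the nose.

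The main obstacle is the $(\Rightarrow)$ direction of (3): the ``section only'' universal property of a generic cell must be combined with the full universal property of cartesianness of $\gamma'$ to deduce that the comparison 2-cell $\lambda$ is genuinely invertible. A subsidiary delicate point in part (2) is showing that the comparison $\beta$ between $\alpha$ and $Tg\cdot\gamma$ is vertical over $T1$ and lives over $Tf\cdot\delta$; this is where the 2-adjunction (rather than merely the fibration structure on $Tt_C$) is needed, because $Tg\cdot\gamma$ is not automatically $Tt_C$-cartesian, so one cannot directly invoke a cartesian factorization.
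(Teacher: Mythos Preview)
Your overall strategy---establishing (3) directly, then (2), then deducing (1) via Theorem~\ref{laxequivT}---is the reverse of the paper's route, which first proves (1) by directly verifying that the comparison functor
\[
\int_{\textnormal{lax}}^{m\in\mathfrak{M}}\mathscr{A}(P_m,W)\to\mathscr{B}(X,TW)
\]
is bijective on objects (the one-dimensional argument) and on morphisms (by invoking Weber's Lemma~5.8, i.e.\ that naive-generics are Weber-lax-generic), then reads off (2) from (1), and finally handles (3) using the now-available Weber-lax-generic property. Your treatment of (3) is essentially the same as the paper's in the forward direction, and your direct verification of the backward direction is fine; the paper instead argues the backward direction by comparing to a generic cell already known to be cartesian via the forward direction.

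The substantive problem is in your $(\Rightarrow)$ direction of (2). You construct the chosen $Tt_B$-cartesian lift $\gamma\colon Th\cdot\delta\Rightarrow z$ of $Tt_C\cdot\alpha$, and then assert that because $\alpha$ and $Tg\cdot\gamma$ share the same $Tt_C$-projection, there is a vertical 2-cell $\beta$ with $(Tg\cdot\gamma)\cdot\beta=\alpha$. But that inference is exactly the cartesian lifting property for $Tg\cdot\gamma$ against $Tt_C$, which---as you yourself concede in your final paragraph---is \emph{not} automatic: whiskering a $Tt_B$-cartesian cell by an arbitrary $Tg$ need not produce a $Tt_C$-cartesian cell. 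You gesture at ``the 2-adjunction underlying the naive parametric right adjoint condition'' as the missing ingredient, but the 2-adjunction $\mathscr{A}(A,C)\cong(\mathscr{B}/T1)((X,Tt_A\cdot\delta),(TC,Tt_C))$ only controls \emph{vertical} 2-cells in $\mathscr{B}(X,TC)$; it does not by itself furnish the factorization of an arbitrary $\alpha$ through $Tg\cdot\gamma$. Producing $(h,\gamma,\nu)$ with $\gamma$ chosen $Tt_B$-cartesian and $T\nu\cdot\delta$ vertical is precisely the content of Weber's Lemma~5.8, which requires a genuine argument combining the 2-adjunction with the fibration structure and which the paper cites rather than reproves. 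As written, your proof of (2) therefore has a gap at exactly this point, and since your deduction of (1) rests on (2), the gap propagates.

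A secondary issue: your proof of (2) $(\Rightarrow)$ also needs to supply clause~(3) of Definition~\ref{deflaxgenericT} (invertible $\alpha$ yields invertible $\gamma$ and $\nu$) and the full universal property in clause~(2) of that definition; you address only strictness. These are routine once the factorization is in hand, but they should be mentioned.
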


\begin{proof}
$\left(1\right)\colon$ It suffices to check that the functors 
\[
\int_{\textnormal{lax}}^{m\in\mathfrak{M}}\mathscr{A}\left(P_{m},W\right)\to\mathscr{B}\left(X,TW\right)
\]
are isomorphisms. That this assignment is bijective on objects is
a consequence of the well-known one-dimensional case (for instance,
see \cite[Prop.~7]{WalkerGeneric}). That the assignment on morphisms
\[
\xymatrix{ & TP_{m}\ar[dd]^{Th} & P_{m}\ar[dd]_{h}\ar[rd]^{f} &  &  &  & TP_{m}\ar[dd]|-{Th}\ar[rd]^{Tf}\\
X\ar[rd]_{\delta_{m'}}\ar[ru]^{\delta_{m}} & \;\ar@{}[l]|-{\Downarrow\alpha} & \; & W\;\ar@{}[l]|-{\Downarrow\beta} & \mapsto & X\ar[rd]_{\delta_{m'}}\ar[ru]^{\delta_{m}} & \;\ar@{}[l]|-{\Downarrow\alpha} & TW\ar@{}[l]|-{\Downarrow T\beta}\\
 & TP_{m'} & P_{m'}\ar[ur]_{g} &  &  &  & TP_{m'}\ar[ur]_{Tg}
}
\]
is bijective follows from the fact each naive-generic is Weber-lax
generic \cite[Lemma 5.8]{Webfam}. Naturality is also an easy consequence
of this fact.

$\left(2\right)\colon$ If $\delta$ is naive-generic, and thus isomorphic
to a representative naive-generic, then $\delta$ is lax-generic by
(1). If $\delta$ is strict lax-generic, then from a $\theta\colon z_{1}\Rightarrow z_{2}$
we have a universal factorization 
\[
\xymatrix{X\myar{x}{r}\ar[d]_{x}\utwocell[0.5]{rd}{\theta} & TA\ar[d]^{Th_{2}} &  & \; & X\myar{x}{r}\ar[d]_{x}\utwocell[0.25]{rd}{\textnormal{id}} & TA\ar[d]^{Th_{2}}\\
TA\myard{Th_{1}}{r} & TB & \ar@{}[ru]|-{=} &  & TA\myard{Th_{1}}{r}\ar[ur]|-{T1} & TB\utwocell[0.35]{ul}{T\overline{\theta}}
}
\]
where we have used that $Tg\cdot\theta$ is an identity to see the
top right triangle above can be taken as an identity. In this way,
we recover the bijection required of a naive-generic.

$\left(3\right)\colon$ Consider a 2-cell 
\[
\xymatrix@R=1em{ & TA\ar[dd]^{Th}\\
X\ar[ur]^{\delta}\ar[rd]_{z}\dltwocell[0.5]{r}{\gamma} & \;\\
 & TB
}
\]

If this 2-cell is generic, then we have a factorization 
\begin{equation}
\xymatrix{X\myar{z}{r}\ar[d]_{\delta}\utwocell[0.5]{rd}{\gamma} & TA\ar[d]^{T\textnormal{id}} &  & \; & X\myar{z}{r}\ar[d]_{\delta}\utwocell[0.25]{rd}{\phi} & TA\ar[d]^{T\textnormal{id}}\\
TA\myard{Th}{r} & TB & \ar@{}[ru]|-{=} &  & TA\myard{Th}{r}\ar[ur]|-{Tk} & TB\utwocell[0.35]{ul}{T\lambda}
}
\label{p5:diag1}
\end{equation}
where $\phi$ is chosen cartesian. By genericity of $\gamma$, we
have an $\lambda^{*}\colon k\Rightarrow h$ such that 
\begin{equation}
\xymatrix@R=1em{ & TA\ar[dd]^{Tk} &  &  & TA\ar[dd]|-{Th}\ar@/^{2pc}/[dd]^{Tk}\\
X\ar[ur]^{\delta}\ar[rd]_{z}\dltwocell[0.5]{r}{\phi} & \; & \ar@{}[]|-{=} & X\ar[ur]^{\delta}\ar[rd]_{z}\dltwocell[0.5]{r}{\gamma} & \;\ltwocell[0.3]{r}{T\lambda^{*}} & \;\\
 & TB &  &  & TB
}
\label{p5:diag2}
\end{equation}
and $\lambda^{*}\lambda=\textnormal{id}_{h}$. Substituting (\ref{p5:diag1})
into (\ref{p5:diag2}) and using that $\delta$ is Weber-lax-generic
gives $\lambda\lambda^{*}=\textnormal{id}_{k}$. Conversely, if this
2-cell is cartesian we then have a factorization 
\[
\xymatrix{X\myar{z}{r}\ar[d]_{\delta}\utwocell[0.5]{rd}{\gamma} & TA\ar[d]^{T\textnormal{id}} &  & \; & X\myar{z}{r}\ar[d]_{\delta}\utwocell[0.25]{rd}{\phi} & TA\ar[d]^{T\textnormal{id}}\\
TA\myard{Th}{r} & TB & \ar@{}[ru]|-{=} &  & TA\myard{Th}{r}\ar[ur]|-{Tk} & TB\utwocell[0.35]{ul}{T\lambda}
}
\]
where $\left(k,\phi\right)$ is a generic 2-cell (which must also
be cartesian by the above argument). Since $\phi$ and $\gamma$ are
cartesian, and thus isomorphic to chosen cartesian morphisms, it follows
that the comparison $\lambda$ is invertible.
\end{proof}
Finally, we give the main result of this section, showing that for
2-functors $T\colon\mathscr{A}\to\mathscr{B}$ our notion of a  lax
familial 2-functor is essentially equivalent to Weber's definition. 
\begin{thm}
\label{webermain} Suppose $\mathscr{A}$ and $\mathscr{B}$ are 2-categories
and that $\mathscr{A}$ has a terminal object. Then for a 2-functor
$T\colon\mathscr{A}\to\mathscr{B}$ the following are equivalent: 
\begin{enumerate}
\item $T$ is Weber-familial; 
\item $T$ is strictly\footnote{By strict we mean isomorphic to a lax conical colimit of representables
in place of equivalent, and that the reindexings $P_{\left(-\right)}^{X}$
are 2-functors instead of pseudofunctors.} lax familial.
\end{enumerate}
\end{thm}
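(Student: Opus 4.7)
The plan is to use Lemma \ref{famlemma} as the central bridge, since it already identifies naive-generics with strict lax-generics and $Tt_B$-cartesian 2-cells with generic 2-cells in the sense of Definition \ref{deflaxgenericcell}. Combined with Theorem \ref{laxequivT} (which reduces lax familiality to factorization plus pasting closure of generics) and the observation that when $\mathscr{A}$ has a terminal object the spectrum takes the simple form $\mathbf{Spec}_T = \mathscr{B}(-,T1)$, both implications should fall out without much extra work.

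For $(1)\Rightarrow(2)$, I would start from Lemma \ref{famlemma}(1), which provides an isomorphism $\mathscr{B}(X,T-)\cong\int_{\textnormal{lax}}^{m\in\mathfrak{M}_X}\mathscr{A}(P_m^X,-)$ at each $X$, where $\mathfrak{M}_X$ consists of chosen naive-generics out of $X$ with chosen $Tt_B$-cartesian cells between them. Since naive-generic factorizations of $f\colon X\to T1$ are unique up to chosen representative, the assignment $\delta\mapsto Tt_A\cdot\delta$ identifies $\mathfrak{M}_X$ with $\mathscr{B}(X,T1)$, giving the required pseudofunctor (in fact 2-functor) $\mathfrak{M}_{(-)}\colon\mathscr{B}^{\textnormal{op}}\to\mathbf{Cat}$. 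The 2-functor $\mathbf{P}\colon\int_{\textnormal{lax}}^{X\in\mathscr{B}}\mathfrak{M}_X\to\mathscr{A}$ sends $(X,f\colon X\to T1)$ to the domain of the chosen naive-generic factoring $f$, and sends a generic cell to its $\mathscr{A}$-component. That $\mathbf{P}$ is a strict 2-functor (not just pseudo) comes from Lemma \ref{famlemma}(3) combined with splitness of each $Tt_A$: the pasting of two chosen cartesian cells is again chosen cartesian, so the pasting-of-generics condition of Theorem \ref{laxequivT}(2)(b) holds on the nose rather than up to coherent isomorphism.

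For $(2)\Rightarrow(1)$, suppose $T$ is strictly lax familial. By Theorem \ref{laxequivT}, every 1-cell into $T$ admits a strict lax-generic factorization and the pasting of generic 2-cells is generic; moreover, the strictness hypothesis makes these factorizations equalities rather than coherent isomorphisms. The direction ``strict lax-generic implies naive-generic'' in the proof of Lemma \ref{famlemma}(2) only uses the universal factorization of 2-cells and does not invoke the split fibration hypothesis, so it applies here; consequently every $f\colon X\to T1$ factors through a naive-generic, which by the recalled proposition is equivalent to $T$ being a naive parametric right adjoint. To exhibit each $Tt_A\colon TA\to T1$ as a split fibration, given $\alpha\colon f\Rightarrow Tt_A\cdot h\colon X\to T1$ I would write $h=T\overline{h}\cdot\delta$ for the chosen strict generic $\delta$ and factor $\alpha$ through $\delta$, obtaining a $\gamma\colon f\Rightarrow h$ lifting $\alpha$; by Lemma \ref{famlemma}(3) this $\gamma$ is $Tt_A$-cartesian. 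The strict choices and the pasting closure of generic cells from Theorem \ref{laxequivT}(2)(b) make these cartesian lifts compose strictly and respect reindexing along any $f\colon X\to Y$, giving the two clauses of Definition \ref{defstreetfib} as well as the splitness requirement.

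The main obstacle is the split fibration part of $(2)\Rightarrow(1)$: one must verify that the cartesian lifts produced by lax-generic factorization assemble into a \emph{split} cleavage (strict composition, strict reindexing preservation), and not merely into a cleavage compatible up to coherent iso. This is precisely where the strictness assumption of (2) is used: the isomorphism $\mathscr{B}(X,T-)\cong\int_{\textnormal{lax}}^{m\in\mathfrak{M}_X}\mathscr{A}(P_m^X,-)$ and the 2-functoriality of $\mathbf{P}$ convert the pasting-of-generics condition into a strict equality of chosen cartesian cells, which is exactly what splitness requires. The secondary technical point, in the forward direction, is to confirm that the 2-functor $\mathbf{P}$ constructed from Weber-familial data strictly respects composition of cartesian cells; this again uses splitness of the fibrations $Tt_A$ in (1), closing the circle between the two definitions.
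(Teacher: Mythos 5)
Your overall skeleton (Lemma \ref{famlemma} as the bridge, plus Theorem \ref{laxequivT}) matches the paper's, but the direction $(2)\Rightarrow(1)$ has a genuine gap at exactly the point you flag as the main obstacle. You justify cartesianness of your lifts by ``Lemma \ref{famlemma}(3)'', but that lemma is stated, and its part (3) is proved, under the hypothesis that $T$ is Weber-familial: the ``generic $\Rightarrow$ cartesian'' half of its proof begins by taking a \emph{chosen} cartesian lift, i.e.\ it already presupposes the split-fibration structure on $Tt_A$ that you are trying to construct, so the appeal is circular. (You were careful about precisely this issue when reusing part (2), but part (3) does not enjoy the same one-sided independence from the hypothesis.) There is also a directionality slip in the construction itself: lax-genericity of a generic $\delta$ factors 2-cells whose \emph{source} has the form $Tf\cdot\delta$, so to lift $\alpha\colon f\Rightarrow Tt_A\cdot h$ one must factor the domain $f$ through its generic $\delta'\colon X\to TP$ (terminality of $1$ forces the $\nu$-component to be an identity, leaving $\alpha=Tt_A\cdot\gamma$ with $\gamma\colon Th'\cdot\delta'\Rightarrow h$), not factor the codomain $h$; and your ``$\gamma\colon f\Rightarrow h$'' does not typecheck, since $f$ lands in $T1$. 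The paper avoids both problems by rewriting $\mathscr{B}(X,Tt_A)$ as the projection $\int_{\textnormal{lax}}^{m\in\mathfrak{M}}\mathscr{A}(P_m,A)\to\int_{\textnormal{lax}}^{m\in\mathfrak{M}}\mathscr{A}(P_m,1)\cong\mathfrak{M}$, for which the cartesian lifts, the splitting, and the identification of cartesian morphisms with generic cells are all manifest; clause (2) of Definition \ref{defstreetfib} is then deduced from (2)(b) of Theorem \ref{laxequivT}. To repair your route you need either this presentation or an independent (non-circular) proof that the universal-factorization 2-cells are $Tt_A$-cartesian.

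In the forward direction there is a smaller misattribution: condition (2)(b) of Theorem \ref{laxequivT} involves the pasting $(kh,\phi f\cdot\theta)$ in which the generic cell $\phi$, living over $Y$, is whiskered by $f\colon X\to Y$. Under the identification of generic cells with $Tt$-cartesian 2-cells, what this requires is that cartesian 2-cells remain cartesian after precomposition by arbitrary 1-cells, i.e.\ clause (2) of Definition \ref{defstreetfib}, together with closure under vertical composition; splitness of the cleavage only accounts for the strictness of the resulting data and does not yield this whiskering stability. So replace your appeal to splitness there by an appeal to part 2 of the fibration definition (as the paper does); your identification of $\mathfrak{M}_X$ with $\mathscr{B}(X,T1)$ is harmless but not needed for this implication.
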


\begin{proof}
$\left(1\right)\Rightarrow\left(2\right)\colon$ Supposing $T\colon\mathscr{A}\to\mathscr{B}$
is Weber-familial, we have that each $\mathscr{B}\left(X,T-\right)$
is a lax conical colimit of representables by Lemma \ref{famlemma}
part (1). Also, as the generic 2-cells may be identified with the
cartesian 2-cells, we know since the fibration $Tt_{B}\colon TB\to T1$
respects precomposition (meaning it satisfies part 2 of Definition
\ref{defstreetfib}) we have the following property: for any generic
2-cell out of an $X\in\mathscr{B}$ as on the left below 
\begin{equation}
\xymatrix@R=1em{ & TA\ar[dd]^{Th} &  &  &  & TA\ar[dd]^{Th}\\
X\ar[ur]^{\delta}\ar[rd]_{z}\dltwocell[0.5]{r}{\gamma} & \; &  & Y\ar[r]^{k} & X\ar[ur]^{\delta}\ar[rd]_{z}\dltwocell[0.5]{r}{\gamma} & \;\\
 & TB &  &  &  & TB
}
\label{extraproperty}
\end{equation}
and map $k\colon Y\to X$ in $\mathscr{B}$, the right diagram is
a generic 2-cell. It is this property (along with closure of generic
cells under composition) which gives (2)(b) of Theorem \ref{laxequivT}.

$\left(2\right)\Rightarrow\left(1\right)\colon$ Suppose $T\colon\mathscr{A}\to\mathscr{B}$
is strictly lax familial. Then $T$ is a naive parametric right adjoint
since $T$ has strict lax generic factorizations, and lax-generic
implies naive generic (shown in part (2) of Lemma \ref{famlemma}).

It remains to check that each $Tt_{A}\colon TA\to T1$ is a split
fibration. To see this, note that for each $X\in\mathscr{B}$ the
functor $\mathscr{B}\left(X,TA\right)\to\mathscr{B}\left(X,T1\right)$
may be written as the functor 
\[
\int_{\textnormal{lax}}^{m\in\mathfrak{M}}\mathscr{A}\left(P_{m},A\right)\to\int_{\textnormal{lax}}^{m\in\mathfrak{M}}\mathscr{A}\left(P_{m},1\right)\cong\mathfrak{M}
\]
defined by the assignment 
\[
\xymatrix{m\ar[dd]_{\lambda} & P_{m}\ar[dd]_{P_{\lambda}}\ar[rd]^{f} &  &  & m\ar[dd]^{\lambda}\\
 & \; & A\;\ar@{}[l]|-{\Downarrow\beta} & \mapsto\\
m' & P_{m'}\ar[ur]_{g} &  &  & m'\mathrlap{\ .}
}
\]
It is straightforward to verify that for each $\left(m',g\colon P_{m}'\to A\right)$
and $\lambda\colon m\to m'$ we recover a cartesian lift as on the
left below
\[
\xymatrix{m\ar[dd]_{\lambda} & P_{m}\ar[dd]_{P_{\lambda}}\ar[rd]^{g\cdot P_{\lambda}} &  &  & m\ar[dd]^{\lambda}\\
 & \; & A\;\ar@{}[l]|-{\Downarrow\textnormal{id}} & \mapsto\\
m' & P_{m'}\ar[ur]_{g} &  &  & m'
}
\]
and it is clear the canonical choice of cartesian lifts given above
splits. The cartesian morphisms are diagrams as above (if the identity
2-cell is replaced by an isomorphism it is still cartesian), and these
correspond to generic cells in $\mathscr{B}\left(X,TA\right)$. That
for each $k\colon Y\to X$ the functor $\mathscr{B}\left(k,TA\right)\colon\mathscr{B}\left(X,TA\right)\to\mathscr{B}\left(Y,TA\right)$
preserves cartesian morphisms then follows from condition (2)(b) of
Theorem \ref{laxequivT}. 
\end{proof}

\section{Examples of familial pseudofunctors\label{examples}}

We will first consider some simple examples of lax familial pseudofunctors
which concern pseudofunctors $T\colon\mathscr{A}\to\mathscr{B}$ where
$\mathscr{A}$ is a 1-category. Our first and simplest examples of
such pseudofunctors $T\colon\mathscr{A}\to\mathscr{B}$ concern the
canonical embeddings into bicategories of spans and polynomials.

The reader will also recall that in this setting where $\mathscr{A}$
is a 1-category, $\textnormal{el }F\cong\textnormal{el }\mathscr{B}\left(X,T-\right)$
is a 1-category for each $X\in\mathscr{B}$, and so the mixed lifting
properties become the usual lifting properties. Indeed, it is clear
that Definition \ref{deflaxgenericcell} becomes trivial in this case,
so that every pair $\left(h,\gamma\right)$ out of a lax-generic $\delta$
is a generic cell.
\begin{example}
\label{spanembfam} The bicategory of spans $\mathbf{Span}\left(\mathcal{E}\right)$
in a category $\mathcal{E}$ with pullbacks was introduced by B\'enabou
\cite{ben1967}, and admits a canonical embedding $T\colon\mathcal{E}\to\mathbf{Span}\left(\mathcal{E}\right)$
sending a morphism $f$ in $\mathcal{E}$ to the span $\left(1,f\right)$.
It is interesting to note that this pseudofunctor is lax familial.
To see this, first observe that a span $X\nrightarrow TA$ is generic
if it is isomorphic to a span of the form 
\[
\xymatrix@R=1em{ & TA\ar[ld]_{s}\ar[rd]^{\textnormal{id}}\\
X &  & TA
}
\]
This is since if a span $\left(s,t\right)$ is generic we can then
factor the diagram on the left below 
\[
\xymatrix{X\myar{\left(s,1\right)}{r}\ar[d]_{\left(s,t\right)} & TM\ar[d]^{Tt} & \ar@{}[d]|-{=} & X\myar{\left(s,1\right)}{r}\ar[d]_{\left(s,t\right)}\utwocell[0.3]{rd}{\gamma} & TM\ar[d]^{Tt}\\
TA\myard{T\textnormal{id}}{r}\utwocell[0.5]{ru}{\textnormal{id}} & TA & \; & TA\myard{T\textnormal{id}}{r}\ar[ur]|-{Tu} & TA\utwocell[0.35]{ul}{T\nu}
}
\]
as on the right above, where $\nu$ is necessarily an identity (because
the domain of $T$ is a 1-category) and $\gamma$ invertible. Hence
$tu=\textnormal{id}$ and $ut$ is invertible, showing that $t$ is
invertible. Conversely, to see such a span $\left(s,1\right)$ is
generic, note that any diagram as on the left below 
\[
\xymatrix{X\myar{\left(u,v\right)}{r}\ar[d]_{\left(s,1\right)} & TM\ar[d]^{Tq} & \ar@{}[d]|-{=} & X\myar{\left(u,v\right)}{r}\ar[d]_{\left(s,1\right)}\utwocell[0.3]{rd}{\gamma} & TM\ar[d]^{Tq}\\
TA\myard{Tp}{r}\utwocell[0.5]{ru}{\alpha} & TB & \; & TA\myard{Tp}{r}\ar[ur]|-{Tv\theta} & TB\utwocell[0.35]{ul}{T\textnormal{id}}
}
\]
universally factors as on the right above, where $\alpha$ and $\gamma$
are the respective morphisms of spans 
\[
\xymatrix@R=1em{ &  & TA\ar[ld]_{s}\ar[rd]^{p}\ar[dd]|-{\theta} &  &  &  & TA\ar[ld]_{s}\ar[rd]^{v\theta}\ar[dd]|-{\theta}\\
\alpha\colon & X &  & TB & \gamma\colon & X &  & TM\\
 &  & \bullet\ar[ur]_{qv}\ar[ul]^{u} &  &  &  & \bullet\ar[ur]_{v}\ar[ul]^{u}
}
\]
As all cells between generic morphisms are generic, it follows that
the category $\mathfrak{M}_{X}$ of generics out of $X$ is the slice
$\mathcal{E}/X$, and so for any $X\in\mathcal{E}$ we may take $P_{\left(-\right)}$
as the functor $\textnormal{dom}\colon\mathcal{E}/X\to\mathcal{E}$,
giving 
\[
\mathbf{Span}\left(\mathcal{E}\right)\left(X,T-\right)\cong\int_{\textnormal{lax}}^{m\in\mathcal{E}/X}\mathcal{E}\left(P_{m},-\right)
\]
Dual to the above, we see that $T\colon\mathcal{E}\to\mathbf{Span}\left(\mathcal{E}\right)^{\textnormal{co}}$
admits ``oplax-generic factorizations'' (meaning the same as lax-generic
factorizations except the direction of the 2-cells are reversed in
Definition \ref{deflaxgenericT}); indeed we may write 
\[
\mathbf{Span}\left(\mathcal{E}\right)^{\textnormal{co}}\left(X,T-\right)\cong\int_{\textnormal{oplax}}^{m\in\mathcal{E}/X}\mathcal{E}\left(P_{m},-\right)
\]
Moreover, the pseudofunctor $T\colon\mathcal{E}\to\mathbf{Span}_{\textnormal{iso}}\left(\mathcal{E}\right)$
admits both lax and oplax generic factorizations, as we may write
\[
\mathbf{Span}_{\textnormal{iso}}\left(\mathcal{E}\right)\left(X,T-\right)\cong\int_{\textnormal{lax}}^{m\in\left(\mathcal{E}/X\right)_{\textnormal{iso}}}\mathcal{E}\left(P_{m},-\right)\cong\int_{\textnormal{oplax}}^{m\in\left(\mathcal{E}/X\right)_{\textnormal{iso}}}\mathcal{E}\left(P_{m},-\right)
\]
where $\left(\mathcal{E}/X\right)_{\textnormal{iso}}$ contains the
objects of $\mathcal{E}/X$ and only those morphisms which are invertible.
The reader will also note that we do not have $\mathbf{Span}_{\textnormal{iso}}\left(\mathcal{E}\right)\left(X,T-\right)\simeq\sum_{\textnormal{ob }\mathcal{E}/X}\mathcal{E}\left(P_{m},-\right)$
as for each $T\in\mathcal{E},$ the right above is a discrete category,
but isomorphisms of spans are not unique (and so the canonical functor
$\mathbf{Span}_{\textnormal{iso}}\left(\mathcal{E}\right)\left(X,T-\right)\to\sum_{\textnormal{ob }\mathcal{E}/X}\mathcal{E}\left(P_{m},-\right)$
is not fully faithful). 
\end{example}

The case of spans is also interesting as it gives a simple example
in which generic factorizations are not unique in the sense that one
might initially expect. That is to say, given two generic factorizations
\[
\xymatrix@R=1em{X\ar[rd]_{\delta}\myar{f}{rr} & \; & TA &  & X\ar[rd]_{\delta}\myar{f}{rr} & \; & TA\\
 & TP\ar[ur]_{T\overline{f}}\utwocell[0.6]{u}{\alpha} &  &  &  & TP\ar[ur]_{T\overline{g}}\utwocell[0.6]{u}{\beta}
}
\]
(meaning isomorphisms $\alpha$ and $\beta$ as above), there is not
necessarily a coherent comparison 2-cell $\overline{f}\Rightarrow\overline{g}$.
\begin{example}
Consider a span
\[
\xymatrix{ & \mathbf{2}\ar[rd]^{\sigma}\ar[ld]_{!}\\
\mathbf{1} &  & \mathbf{2}
}
\]
where $\sigma$ is the swap map. Here we have the two distinct generic
factorizations
\[
\xymatrix@R=1em{\mathbf{1}\ar[rd]_{\left(!,1\right)}\myar{\left(!,\sigma\right)}{rr} & \; & T\mathbf{2} &  & \mathbf{1}\ar[rd]_{\left(!,1\right)}\myar{\left(!,\sigma\right)}{rr} & \; & T\mathbf{2}\\
 & T\mathbf{2}\ar[ur]_{T1}\utwocell[0.6]{u}{\sigma} &  &  &  & T\mathbf{2}\ar[ur]_{T\sigma}\utwocell[0.6]{u}{\textnormal{id}}
}
\]
\end{example}

In the following examples we will omit the verification that the generic
morphisms and cells are classified correctly, as these calculations
involving polynomial functors are quite technical.
\begin{example}
\label{polyinc} For a locally cartesian closed category $\mathcal{E}$,
one may form the bicategory of polynomials in $\mathcal{E}$, denoted
$\mathbf{Poly}\left(\mathcal{E}\right)$, whose objects are those
of $\mathcal{E}$, morphisms are triples $\left(s,p,t\right)\colon X\to Y$
as below
\[
\xymatrix@R=1em{ & E\ar[ld]_{s}\ar[r]^{p} & B\ar[rd]^{t}\\
X &  &  & Y
}
\]
called \emph{polynomials}, and a (general) 2-cell of polynomials is
a diagram as below
\[
\xymatrix{ & E\ar[dl]_{s}\ar[r]^{p} & B\ar[dr]^{t}\ar[dd]^{g}\\
X & S\ar[d]_{f}\ar[ur]_{pe}\ar[u]^{e}\ar@{}[r]|-{\textnormal{pb}} & \; & Y\\
 & M\ar[r]_{q}\ar[ul]^{u} & N\ar[ur]_{v}
}
\]
where the indicated square is a pullback\footnote{Actually a 2-cell is an equivalence class of such diagrams; or such
a diagram where the pullback is chosen.} \cite{gambinokock,weber}. If $e$ is invertible (so that the 2-cell
is just a pullback) the 2-cell is said to be \emph{cartesian} \cite{gambinokock,weber}. 

Similar to the case of spans, we see that the canonical pseudofunctor
$T\colon\mathcal{E}\to\mathbf{Poly}\left(\mathcal{E}\right)$ sending
a morphism $f$ to $\left(1,1,f\right)$ is lax familial. Indeed,
one can verify that a polynomial $X\nrightarrow TA$ is generic precisely
when it is isomorphic to the form 
\[
\xymatrix@R=1em{ & TM\ar[ld]_{s}\ar[r]^{p} & TA\ar[rd]^{\textnormal{id}}\\
X &  &  & TA
}
\]
and it is trivial that any general 2-cell of polynomials as below
\[
\xymatrix@R=1em{ & TA\ar[dd]^{Tt}\\
X\ar[ur]^{\left(s,p,\textnormal{id}\right)}\ar[rd]_{\left(u,q,v\right)}\dtwocell[0.5]{r}{\gamma} & \;\\
 & TB
}
\]
is generic (as the domain of $T$ is a 1-category). Consequently,
denoting by $\Pi_{\mathcal{E}}$ the pseudomonad freely adding products
to a fibration \cite{fibredcategories}, we may take $P_{\left(-\right)}$
as the functor $\textnormal{pr}\colon\Pi_{\mathcal{E}}\left(\mathcal{E}/X\right)\to\mathcal{E}$
where $\Pi_{\mathcal{E}}\left(\mathcal{E}/X\right)$ is the category
with objects given by spans 
\[
\xymatrix@R=0.1em{X & T\ar[l]_{f}\ar[r]^{g} & U}
\]
out of $X$, and morphisms of spans from $\left(f,g\right)\nrightarrow\left(f',g'\right)$
given by a pair $\beta\colon U\to U'$ and $\alpha\colon W\to T$
(where $W$ is the chosen pullback of $\beta$ and $g'$) rendering
commutative the diagram 
\[
\xymatrix@R=0.1em{ & T\ar[ldd]_{f}\ar[rd]^{g}\\
 &  & U\ar@{..>}[dd]^{\beta}\\
X & W\ar@{..>}[uu]^{\alpha}\ar[dd]\ar[ru]\ar@{}[rd]|-{\textnormal{pb}}\\
 &  & U'\\
 & T'\ar[luu]^{f'}\ar[ru]_{g'}
}
\]
As a consequence we have the formula
\[
\mathbf{Poly}\left(\mathcal{E}\right)\left(X,T-\right)\cong\int_{\textnormal{lax}}^{m\in\Pi_{\mathcal{E}}\left(\mathcal{E}/X\right)}\mathcal{E}\left(P_{m},-\right)
\]
for all $X\in\mathbf{Poly}\left(\mathcal{E}\right)$. 
\end{example}

\begin{rem}
From Example \ref{polyinc}, we see that the usual inclusion $\mathbf{Span}\left(\mathcal{E}\right)\to\mathbf{Poly}\left(\mathcal{E}\right)$
can be seen as coming from the unit components $u_{\mathcal{E}/X}\colon\mathcal{E}/X\to\Pi_{\mathcal{E}}\left(\mathcal{E}/X\right)$
of the pseudomonad $\Pi_{\mathcal{E}}$ for fibrations with products
\cite{fibredcategories}. Indeed, the family of functors $\mathbf{Span}\left(\mathcal{E}\right)\left(X,Y\right)\to\mathbf{Poly}\left(\mathcal{E}\right)\left(X,Y\right)$
may be written as the functors 
\[
\int_{\textnormal{lax}}^{m\in\mathcal{E}/X}\mathcal{E}\left(P_{m},Y\right)\to\int_{\textnormal{lax}}^{m\in\Pi_{\mathcal{E}}\left(\mathcal{E}/X\right)}\mathcal{E}\left(P_{m},Y\right)
\]
resulting from this reindexing.
\end{rem}

We now give a more complicated example, where $\mathscr{A}$ is not
a 1-category. In this situation the mixed lifting properties are necessary
(unlike the earlier examples where usual liftings would suffice),
and so it is no longer the case that every $\left(h,\gamma\right)$
out of a generic morphism is a generic 2-cell. 
\begin{example}
The canonical pseudofunctor $T\colon\mathbf{Span}\left(\mathcal{E}\right)^{\textnormal{co}}\to\mathbf{Poly}\left(\mathcal{E}\right)$,
given by
\[
\xymatrix{ & E\ar[dr]^{t}\ar[dl]_{s} &  &  &  & E\ar[dl]_{s}\ar[r]^{t} & Y\ar[dr]^{1}\ar[dd]^{1}\\
X &  & Y & \mapsto & X & M\ar[d]_{1}\ar[ur]_{v}\ar[u]^{f}\ar@{}[r]|-{\textnormal{pb}} & \; & Y\\
 & M\ar[ur]_{v}\ar[ul]^{u}\ar[uu]^{f} &  &  &  & M\ar[r]_{v}\ar[ul]^{u} & Y\ar[ur]_{1}
}
\]
is such that $T^{\textnormal{op}}$ is lax familial. Here a polynomial
$TA\nrightarrow X$ is opgeneric (meaning the opposite morphism in
$\mathbf{Poly}\left(\mathcal{E}\right)^{\textnormal{op}}$ is lax
generic with respect to $T^{\textnormal{op}}$) if it is isomorphic
to the form 
\[
\xymatrix@R=1em{ & TA\ar[ld]_{\textnormal{id}}\ar[r]^{\textnormal{id}} & TA\ar[rd]^{f}\\
TA &  &  & X
}
\]
and a pair $\left(\left(s,t\right),\gamma\right)$ out of a opgeneric
as below 
\[
\xymatrix@R=1em{ & X\ar[dd]^{T\left(s,t\right)=\left(s,t,1\right)}\\
TA\ar[ur]^{\left(1,1,f\right)}\ar[rd]_{\left(v,u,g\right)}\dltwocell[0.5]{r}{\gamma} & \;\\
 & TB
}
\]
is generic when $\gamma\colon\left(s,t,f\right)\Rightarrow\left(v,u,g\right)$
is a cartesian 2-cell of polynomials. We note also that cartesian
morphisms of polynomials are closed under vertical composition as
well as precomposition by another polynomial.

Given a general morphism of polynomials $\phi\colon\left(s,t,f\right)\Rightarrow\left(v,u,g\right)$
as given by the diagram 
\[
\xymatrix@R=1em{ & I\ar[ld]_{s}\ar[r]^{t} & M\ar[rd]^{f}\ar[dd]^{h}\\
TA & P\ar[u]^{e}\ar[ur]^{u'}\ar[d]_{h'}\ar@{}[rd]|-{\textnormal{pb}} & \; & X\\
 & J\ar[lu]^{v}\ar[r]_{u} & N\ar[ru]_{g}
}
\]
the opgeneric factorization of $\phi$ is given by 
\[
\xymatrix@R=1em{ &  &  &  &  & M\ar@/^{0.3pc}/[rd]^{\left(1,1,f\right)}\\
TA\ar@/^{1.4pc}/[rr]^{\left(s,t,f\right)}\ar@/_{1.4pc}/[rr]_{\left(v,u,g\right)}\dtwocell[0.5]{rr}{\phi} &  & X & = & TA\ar@/^{0.3pc}/[ur]^{T\left(s,t\right)}\ar@/_{0.3pc}/[rd]_{T\left(v,u\right)}\dtwocell[0.4]{r}{T\nu} & \;\dtwocell[0.5]{r}{\gamma} & X\\
 &  &  &  &  & N\ar@/_{0.3pc}/[ru]_{\left(1,1,g\right)}\ar[uu]|-{T\left(h,1\right)}
}
\]
where $\nu$ is the reversed morphism of spans on the left below 
\[
\xymatrix@R=1em{ & I\ar[rd]^{t}\ar[ld]_{s} &  &  &  & M\ar[ld]_{h}\ar[r]^{1}\ar[dd]_{h} & M\ar[rd]^{f}\ar[dd]^{h}\\
TA &  & M &  & N &  &  & X\\
 & P\ar[ur]_{u'}\ar[lu]^{vh'}\ar[uu]_{e} &  &  &  & N\ar[lu]^{1}\ar[r]_{1} & N\ar[ru]_{g}
}
\]
and $\gamma$ is the cartesian morphism of polynomials on the right
above. It follows that for any $X\in\mathcal{E}$ we may take $P_{\left(-\right)}$
as the functor 
\[
\xymatrix{\mathcal{E}/X\myar{\textnormal{dom}}{r} & \mathcal{E}\myar{\iota}{r} & \mathbf{Span}\left(\mathcal{E}\right)^{\textnormal{coop}}}
\]
where $\iota$ sends each morphism $h\colon A\to B$ to $\left(h,1_{A}\right)\in\mathbf{Span}\left(\mathcal{E}\right)^{\textnormal{coop}}$,
and get 
\[
\mathbf{Poly}\left(\mathcal{E}\right)^{\textnormal{op}}\left(X,T-\right)\cong\int_{\textnormal{lax}}^{m\in\mathcal{E}/X}\mathbf{Span}\left(\mathcal{E}\right)^{\textnormal{coop}}\left(P_{m},-\right).
\]
\end{example}

We now give a natural example which does not come from a pseudofunctor
of bicategories $T\colon\mathscr{A}\to\mathscr{B}$. Indeed, the following
may be seen as the main motivating example for this paper. 
\begin{example}
Following Carboni and Johnstone \cite{CarbJohnFam}, we consider the
pseudofunctor $\mathbf{Fam}\colon\mathbf{CAT}\to\mathbf{CAT}$ sending
a category $\mathcal{C}$ to the category $\mathbf{Fam}\left(\mathcal{C}\right)$
with objects given by families of objects of $\mathcal{C}$ denoted
$\left(A_{i}\in\mathcal{C}\colon i\in I\right)$, and morphisms $\left(A_{i}\in\mathcal{C}\colon i\in I\right)\nrightarrow\left(B_{j}\in\mathcal{C}\colon j\in J\right)$
given by a reindexing $\varphi\colon I\to J$ with comparison maps
$A_{i}\to B_{\varphi\left(i\right)}$ for each $i\in I$.

A trivial case of such a family is when $\mathcal{C}$ is any set
$I$, and $\left(i:i\in I\right)$ is such a family of objects (set-elements)
of $I$ indexed by the set $I$. Noting this, one can show the el-generic
objects of $\textnormal{el }\mathbf{Fam}$ are those elements of the
form $\left(I,\left(i:i\in I\right)\right)$ for a set $I$. And it
is clear that for any general element $\left(\mathcal{C},\left(B_{j}\colon j\in J\right)\right)$
of $\textnormal{el }\mathbf{Fam}$ that we have the ``generic factorization''
(that is an opcartesian map from a generic) 
\[
\xymatrix{\left(J,\left(j:j\in J\right)\right)\ar[rr]^{\left(B_{\left(-\right)},\textnormal{id}\right)} &  & \left(\mathcal{C},\left(B_{j}\colon j\in J\right)\right)}
\]
Also, a general morphism out of an el-generic object 
\[
\xymatrix{\left(I,\left(i:i\in I\right)\right)\ar[rr]^{\left(H_{\left(-\right)},\left(\varphi,\gamma\right)\right)} &  & \left(\mathcal{C},\left(B_{j}\colon j\in J\right)\right)}
\]
consists of a functor $H_{\left(-\right)}\colon I\to\mathcal{C}$,
a function $\varphi\colon I\to J$, and morphisms $\gamma_{i}\colon H_{i}\to B_{\varphi\left(i\right)}$
indexed over $i\in I$. Such a morphism is generic precisely when
every $\gamma_{i}$ is invertible.

It is then clear that the category of generic objects and generic
morphisms between them (note $H_{\left(-\right)}$ is uniquely determined
by $\varphi$ in this case) is isomorphic to $\mathbf{Set}$. It follows
that the $\mathbf{Fam}$ construction is given by\footnote{Whilst this example involves large categories, the indexing $\mathbf{Set}$
is locally small, and our results still apply. We do not wish to burden
this paper with a discussion of size conditions.} 
\[
\mathbf{Fam}\left(\mathcal{C}\right)=\int_{\textnormal{lax}}^{X\in\mathbf{Set}}\mathcal{C}^{X},\qquad\mathcal{C}\in\mathbf{CAT}
\]

It is worth noting that restricting to the category of finite sets
$\mathbf{Set}_{\textnormal{fin}}$, yields the finite families construction
$\mathbf{Fam}_{f}$, and restricting further the category of finite
sets and bijections $\mathbb{P}$ yields the free symmetric (strict)
monoidal category construction. 
\end{example}

The above shows that $\mathbf{Fam}$ is a lax conical colimit of representables.
However, it is also interesting that $\mathbf{Fam}$ is lax familial
in the sense of Definition \ref{deflaxfamilial}.
\begin{example}
The pseudofunctor $\mathbf{Fam}\colon\mathbf{CAT}\to\mathbf{CAT}$
is lax familial. Here the generic morphisms are those functors of
the form 
\[
\delta_{F}\colon\mathcal{C}\to\mathbf{Fam}\left(\textnormal{el }F\right)\colon X\mapsto\left(\left(X,x\right)\in\textnormal{el }F\colon x\in FX\right)
\]
for a presheaf $F\colon\mathcal{C}\to\mathbf{Set}$ (Weber refers
to these as ``functors endowing $\mathcal{C}$ with elements'' \cite[Definition 5.10]{Webfam}).
A cell out of such a generic morphism 
\[
\xymatrix@R=1em{ & \mathbf{Fam}\left(\textnormal{el }F\right)\ar[dd]^{\mathbf{Fam}\left(H\right)}\\
\mathcal{C}\ar[ur]^{\delta}\dltwocell[0.5]{r}{\gamma}\ar[rd]_{z} & \;\\
 & \mathbf{Fam}\left(\mathcal{B}\right)
}
\]
is generic when the comparison maps (not necessarily the reindexing
maps) comprising each $\gamma_{X}$ for $X\in\mathcal{C}$ are required
invertible. It follows that the lax familial representatibly of $\mathbf{Fam}$
is shown by the formula
\[
\mathbf{CAT}\left(\mathcal{C},\mathbf{Fam}\left(-\right)\right)\cong\int_{\textnormal{lax}}^{F\colon\mathcal{C}\to\mathbf{Set}}\mathbf{CAT}\left(\textnormal{el }F,-\right)
\]
for each $\mathcal{C}\in\mathbf{CAT}$. 
\end{example}

\section{The spectrum factorization of a lax familial pseudofunctor\label{spectrumsec}}

In the simpler dimension-one case, Diers \cite{Diers} showed that
familial functors (as defined in Definition \ref{leftmultiadjoint})
have the following simple characterization:
\begin{thm}
[Diers] Let $T\colon\mathcal{A}\to\mathcal{B}$ be a functor of categories.
Then the following are equivalent: 
\begin{enumerate}
\item the functor $T$ is familial; 
\item there exists a factorization
\[
\xymatrix@R=1em{\mathcal{A}\ar[rr]^{T}\ar[rd]_{G} &  & \mathcal{B}\\
 & \mathcal{M}\ar[ur]_{V}
}
\]
such that: 
\begin{enumerate}
\item $V$ is a discrete fibration;
\item $G$ has a left adjoint.
\end{enumerate}
\end{enumerate}
\end{thm}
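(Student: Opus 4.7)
The plan is to use the spectrum construction (in its one-dimensional guise) to produce the factorization in the forward direction, and conversely to decompose $\mathcal{B}(X,T-)$ using the discrete fibration property and the adjunction. The two directions are largely formal once the correct choice of $\mathcal{M}$ is made.

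For $(1)\Rightarrow(2)$, I would first define the spectrum $\mathbf{Spec}_T\colon\mathcal{B}^{\textnormal{op}}\to\mathbf{Set}$ sending each $X$ to the set $\mathfrak{M}_X$ of isomorphism classes of generic morphisms out of $X$, with $\mathbf{Spec}_T(f)$ given by taking generic factorizations. Setting $\mathcal{M}:=\textnormal{el }\mathbf{Spec}_T$, the canonical projection $V\colon\mathcal{M}\to\mathcal{B}$ is automatically a discrete fibration. I would then define $G\colon\mathcal{A}\to\mathcal{M}$ on objects by $G(A)=(TA,\iota_A)$, where $\iota_A$ is the generic out of $TA$ corresponding to $A$ under the identification $\mathfrak{M}^{\mathcal{B}(TA,T-)}\cong\mathcal{A}$ (the identity $1_{TA}$ is always generic, and corresponds to $A$ itself). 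On morphisms, $Gf$ is the unique lift of $Tf$ into $\mathcal{M}$ provided by $V$ being a fibration. By construction $VG=T$.

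The candidate left adjoint $L\colon\mathcal{M}\to\mathcal{A}$ is defined by $(X,\delta\colon X\to TP_\delta)\mapsto P_\delta$. The desired bijection
\[
\mathcal{M}((X,\delta),(TA,\iota_A))\;\cong\;\mathcal{A}(P_\delta,A)
\]
is exactly the defining universal property of the generic $\delta$: by the description of morphisms in $\textnormal{el }\mathbf{Spec}_T$, a morphism from $(X,\delta)$ to $(TA,\iota_A)$ lying over $f\colon X\to TA$ amounts to a generic factorization of $\iota_A\circ f=f$ through $\delta$, i.e.\ a (unique) $\bar{f}\colon P_\delta\to A$ with $f=T\bar{f}\cdot\delta$. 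Naturality in $A$ is immediate, and naturality in $(X,\delta)$ follows from the functoriality of the spectrum.

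For $(2)\Rightarrow(1)$, the key fact is that a discrete fibration $V$ induces, for each $W\in\mathcal{M}$, a natural bijection between morphisms $f\colon X\to V(W)$ in $\mathcal{B}$ and pairs consisting of an object $f^*W\in V^{-1}(X)$ together with a morphism $f^*W\to W$ in $\mathcal{M}$ lying over $f$. Combining this with the adjunction $L\dashv G$ gives
\[
\mathcal{B}(X,TA)=\mathcal{B}(X,VGA)\;\cong\;\sum_{m\in V^{-1}(X)}\mathcal{M}(m,GA)\;\cong\;\sum_{m\in V^{-1}(X)}\mathcal{A}(Lm,A),
\]
naturally in $A$, which exhibits $T$ as familial with indexing set $\mathfrak{M}_X=V^{-1}(X)$ and objects $P_m=Lm$. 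The main obstacle is the functoriality of $G$ in the first direction, which requires that generic factorizations are closed under composition and preserved by identities; this is the one-dimensional specialization of condition (2)(b) of Theorem \ref{laxequivT} and follows from the uniqueness clause in the definition of a generic morphism.
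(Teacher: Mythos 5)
Your converse direction $(2)\Rightarrow(1)$ is fine: for a discrete fibration $V$ the bijection $\mathcal{B}\left(X,VW\right)\cong\sum_{m\in V^{-1}\left(X\right)}\mathcal{M}\left(m,W\right)$ holds and, combined with $L\dashv G$, exhibits $\mathcal{B}\left(X,T-\right)$ as a coproduct of representables. The forward direction has the right architecture (it is the one-dimensional shadow of the paper's proof of Theorem \ref{specfacthm}; the paper itself states this theorem without proof, citing Diers), but your definition of $G$ on objects contains a genuine error: the identity $1_{TA}$ is \emph{not} generic in general, and $\mathfrak{M}^{\mathcal{B}\left(TA,T-\right)}$ is not $\mathcal{A}$. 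For instance, for the forgetful functor $U\colon\mathbf{Grp}\to\mathbf{Set}$ (a right adjoint, hence familial) the generics out of $X$ are, up to isomorphism, the unit components $\eta_{X}\colon X\to UFX$; thus $1_{UA}$ is generic only when the counit $FUA\to A$ is invertible, i.e.\ when $A$ is free on its own underlying set, and the set of isomorphism classes of generics out of $UA$ is a singleton rather than the objects of $\mathbf{Grp}$. Consequently your $\left(TA,\iota_{A}\right)$ with $\iota_{A}=1_{TA}$ is not even an object of $\mathcal{M}=\textnormal{el }\mathbf{Spec}_{T}$, and ``$Gf$ is the unique lift of $Tf$'' is ill-posed as stated: in a discrete fibration the lift is determined by its codomain, but you must still verify that its domain is $G\left(A\right)$.

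The repair is exactly what the paper does bicategorically in Theorem \ref{specfacthm}: factor the identity generically, $1_{TA}=Te_{A}\cdot\delta_{A}$ with $\delta_{A}\colon TA\to T\overline{A}$ generic, and set $G\left(A\right)=\left(TA,\left[\delta_{A}\right]\right)$. One then needs the easy lemma that postcomposing with a morphism in the image of $T$ does not change the generic part of a factorization. Applied to $Te_{A}\cdot\left(\delta_{A}\cdot f\right)=f$ it shows the generic part of $\delta_{A}\cdot f$ is that of $f$, so a morphism $\left(X,\left[\delta\right]\right)\to G\left(A\right)$ over $f$ exists precisely when $f=T\overline{f}\cdot\delta$ for a (unique, by the uniqueness clause of genericity) $\overline{f}\colon P_{\delta}\to A$ -- this is the adjunction bijection you want, which your shortcut via ``$\iota_{A}\circ f=f$'' obscures. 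Applied to $Tf=T\left(fe_{A}\right)\cdot\delta_{A}$ it shows the unique lift of $Tf$ with codomain $G\left(B\right)$ indeed has domain $G\left(A\right)$, so $G$ is well defined, with functoriality then free from uniqueness of lifts. With these corrections your argument goes through and agrees with the construction the paper generalizes.
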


When $\mathcal{A}$ has a terminal object, it is not hard to see that
$\mathcal{M}\simeq\mathcal{B}/T\mathbf{1}$. This gives the following
simple consequence:
\begin{cor}
Let $T\colon\mathcal{A}\to\mathcal{B}$ be a functor of categories,
and assume $\mathcal{A}$ has a terminal object. Then $T$ is familial
(is a parametric right adjoint) if and only if the canonical projection
\[
T_{1}\colon\mathcal{A}/\mathbf{1}\to\mathcal{B}/T\mathbf{1}
\]
has a left adjoint. 
\end{cor}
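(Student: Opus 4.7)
The plan is to derive the corollary directly from Diers' theorem (the immediately preceding statement) by identifying the canonical factorization of a familial functor, in the presence of a terminal object, with the factorization through the slice $\mathcal{B}/T\mathbf{1}$.

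First I would set up the relevant factorization. Since $\mathbf{1}$ is terminal, the forgetful functor $\mathcal{A}/\mathbf{1}\to\mathcal{A}$ is an isomorphism (every object has a unique map to $\mathbf{1}$), so we may freely identify $\mathcal{A}$ with $\mathcal{A}/\mathbf{1}$. The functor $T_{\mathbf{1}}\colon\mathcal{A}/\mathbf{1}\to\mathcal{B}/T\mathbf{1}$ sends $(A,t_A)$ to $(TA,Tt_A)$, and composition with the domain projection $V\colon\mathcal{B}/T\mathbf{1}\to\mathcal{B}$ recovers $T$. The projection $V$ is a discrete fibration: given any $f\colon X\to TA$ in $\mathcal{B}$ and any object $(TA,\alpha)$ of $\mathcal{B}/T\mathbf{1}$, the unique cartesian lift of $f$ is $f\colon(X,\alpha f)\to(TA,\alpha)$.

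For the backward direction, suppose $T_{\mathbf{1}}$ has a left adjoint. Then the factorization $T=V\circ T_{\mathbf{1}}$ exhibits $T$ as a right adjoint followed by a discrete fibration, and Diers' theorem immediately concludes that $T$ is familial.

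For the forward direction, suppose $T$ is familial. By Diers' theorem we obtain a factorization $T=V'\circ G$ with $V'\colon\mathcal{M}\to\mathcal{B}$ a discrete fibration and $G$ a right adjoint. The remark preceding the corollary tells us that, with $\mathcal{A}$ admitting a terminal object, we have an equivalence $\mathcal{M}\simeq\mathcal{B}/T\mathbf{1}$ over $\mathcal{B}$. Transporting $G$ along this equivalence yields a right adjoint $\mathcal{A}\to\mathcal{B}/T\mathbf{1}$ whose composite with $V$ is $T$; since $V$ is a discrete fibration, such a functor is uniquely determined by this property and must coincide (up to isomorphism) with $T_{\mathbf{1}}$. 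Hence $T_{\mathbf{1}}$ has a left adjoint.

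The main obstacle is the last identification in the forward direction: one needs to know that the equivalence $\mathcal{M}\simeq\mathcal{B}/T\mathbf{1}$ sends the spectrum functor $G$ to $T_{\mathbf{1}}$, not just to some other right adjoint over $\mathcal{B}$. This is handled by the general fact that, for any discrete fibration $V\colon\mathcal{N}\to\mathcal{B}$, lifts $\mathcal{A}\to\mathcal{N}$ of a given functor $T\colon\mathcal{A}\to\mathcal{B}$ are uniquely determined (up to unique isomorphism) by the data of $T$ together with the presheaf classifying $V$; since the spectrum and $\mathcal{B}/T\mathbf{1}$ classify the same presheaf $\mathcal{B}(-,T\mathbf{1})$, any two such lifts must agree, completing the argument.
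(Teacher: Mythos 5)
Your backward direction is fine, and your overall route (Diers' factorization theorem plus the identification $\mathcal{M}\simeq\mathcal{B}/T\mathbf{1}$ from the preceding remark) is exactly the one the paper intends. The gap is in the last step of your forward direction. The ``general fact'' you invoke --- that lifts of $T$ along a discrete fibration $V\colon\mathcal{N}\to\mathcal{B}$ are determined up to unique isomorphism by $T$ together with the presheaf classifying $V$ --- is false. Such lifts correspond to natural families $x_{A}\in P(TA)$, where $P$ is the classifying presheaf, and these can be plentiful and pairwise non-isomorphic: take $\mathcal{A}=\mathcal{B}=\mathbf{1}$ and $P$ a two-element set, so that the identity functor has two non-isomorphic lifts; or note that post-composing $T_{1}$ with the endofunctor of $\mathcal{B}/T\mathbf{1}$ induced by any endomorphism of $T\mathbf{1}$ gives another lift of $T$ through $\mathcal{B}/T\mathbf{1}$. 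So from ``$G$ transports to some right-adjoint lift of $T$ over $\mathcal{B}$'' you cannot conclude by uniqueness of lifts that this lift is $T_{1}$.

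What is needed instead is to check that the specific equivalence $\mathcal{M}\simeq\mathcal{B}/T\mathbf{1}$ identifies $G$ with $T_{1}$, which is a short computation with the explicit description of the spectrum: the equivalence sends a generic $x\colon X\to TA$ to $Tt_{A}\cdot x\colon X\to T\mathbf{1}$, and $G(A)$ is the generic part $\delta_{A}$ of a generic factorization $1_{TA}=Te_{A}\cdot\delta_{A}$ with $e_{A}\colon P_{A}\to A$, so it is sent to $Tt_{P_{A}}\cdot\delta_{A}=Tt_{A}\cdot Te_{A}\cdot\delta_{A}=Tt_{A}$, i.e.\ to $T_{1}(A)$, and likewise on morphisms. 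With this inserted, the transported left adjoint is a left adjoint of $T_{1}$ and your argument closes. (Alternatively, one can bypass the issue entirely by checking directly that a universal arrow from an object $\left(X,g\right)$ of $\mathcal{B}/T\mathbf{1}$ to $T_{1}$ is the same thing as a generic factorization of $g$, which proves the corollary without appealing to the factorization theorem at all.)
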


It is the purpose of this section to find an analogue of these results
in the dimension two case. However, as we will see, this is much more
complicated than simply asking for a left bi-adjoint. Instead we will
require certain types of ``lax'' adjunctions (or adjunctions up
to adjunction).

\subsection{Lax $\mathsf{F}$-adjunctions}

In the setting of an adjunction of functors $F\dashv G\colon\mathcal{A}\to\mathcal{M}$
we have natural hom-set isomorphisms $\mathcal{A}\left(F_{m},A\right)\cong\mathcal{M}\left(m,GA\right)$.
More generally, one can talk about bi-adjunctions of pseudofunctors
$F\dashv G\colon\mathscr{A}\to\mathscr{M}$ where we only ask for
natural hom-category equivalences $\mathscr{A}\left(F_{m},A\right)\simeq\mathscr{M}\left(m,GA\right)$
\cite{biequivtri}. However, even this notion is often too strong. 

Central to the theory of lax familial pseudofunctors is the theory
of lax adjunctions \cite{bettipower}, where one only asks that we
have adjoint pairs
\[
L_{m,A}\colon\mathscr{A}\left(F_{m},A\right)\to\mathscr{M}\left(m,GA\right),\qquad R_{m,A}\colon\mathscr{M}\left(m,GA\right)\to\mathscr{A}\left(F_{m},A\right)
\]
pseudonatural (or even lax natural) in $A\in\mathscr{A}$ and $m\in\mathscr{M}$. 

The following type of lax adjunctions, called \emph{lax $\mathsf{F}$-adjunctions},
appear when studying familial pseudofunctors.\footnote{Here the $\mathsf{F}$ denotes the category whose objects are fully
faithful functors and morphisms are pseudo-commuting squares. Moreover,
these concepts arise from considering $\mathsf{F}$-enriched (bi)categories;
though we will not use this enrichment perspective \cite{Fcaty}.} These are the lax adjunctions which naturally restrict to biadjunctions
on a class of ``tight'' maps. A simple example of tight maps are
the pseudo-commuting triangles (pseudo slice) of the lax slice category
of a pseudofunctor. Before defining lax $\mathsf{F}$-adjunctions,
we must first define $\mathsf{F}$-bicategories and see how they assemble
into a tricategory $\mathsf{F}\textnormal{-}\mathbf{Bicat}$. It is
not hard to verify this data forms a tricategory given that bicategories,
pseudofunctors, pseudo-natural transformations and modifications do
\cite{BicatTriequiv}.
\begin{defn}
The following notions below:
\begin{itemize}
\item an \emph{$\mathsf{F}$-bicategory} is a bicategory $\mathscr{A}$
equipped with an identity on objects, injective on 1-cells, locally
fully faithful functor $\mathscr{A}_{\mathsf{T}}\to\mathscr{A}$.
The 1-cells of $\mathscr{A}_{\mathsf{T}}$ are called the \emph{tight}
1-cells of $\mathscr{A}$ and are required to be closed under invertible
2-cells;
\item an \emph{$\mathsf{F}$-pseudofunctor} $\left(\mathscr{A},\mathscr{A}_{T}\right)\nrightarrow\left(\mathscr{B},\mathscr{B}_{T}\right)$
is a pseudofunctor $F\colon\mathscr{A}\to\mathscr{B}$ which restricts
to a pseudofunctor $F_{T}\colon\mathscr{A}_{T}\to\mathscr{B}_{T}$;
\item a \emph{lax $\mathsf{F}$-natural transformation} $\alpha\colon F\Rightarrow G\colon\left(\mathscr{A},\mathscr{A}_{T}\right)\to\left(\mathscr{B},\mathscr{B}_{T}\right)$
is a lax natural transformation $\alpha\colon F\Rightarrow G$ such
that both:
\begin{enumerate}
\item for all $X\in\mathscr{A}$, $\alpha_{X}\colon FX\to GX$ is tight;
\item for all $f\colon X\to Y$ tight, $\alpha_{f}\colon Gf\cdot\alpha_{X}\Rightarrow\alpha_{Y}\cdot Ff$
is invertible.
\end{enumerate}
\end{itemize}
define the tricategory $\mathsf{F}\textnormal{-}\mathbf{Bicat}$ of
$\mathsf{F}$-bicategories, $\mathsf{F}$-pseudofunctors, lax $\mathsf{F}$-natural
transformations, and modifications.
\end{defn}

The above allows for a particularly simple definition of lax $\mathsf{F}$-adjunctions.

\begin{defn}[Lax $\mathsf{F}$-adjunction]
\label{laxFadj} A \emph{lax $\mathsf{F}$-adjunction} of $\mathsf{F}$-pseudofunctors
\[
\xymatrix{\left(\mathscr{A},\mathscr{A}_{T}\right)\ar@<1.4ex>[rr]^{F} & \perp & \left(\mathscr{B},\mathscr{B}_{T}\right)\ar@<1.2ex>[ll]^{G}}
\]
is a biadjunction \cite{biequivtri} in the tricategory $\mathsf{F}\textnormal{-}\mathbf{Bicat}$.
\end{defn}

\begin{rem}
It is worth noting that this definition immediately tells us that
lax $\mathsf{F}$-adjunctions enjoy nice properties such as uniqueness
of adjoints up to equivalence.
\end{rem}

Whilst the above definition is conceptually informative, for our purposes
it will be more useful to define these adjunctions in terms of universal
arrows. This is due to the connection between the universal arrow
definition and notions of genericity. 
\begin{rem}
From now on we will regard the right adjoint $G$ as a $\mathsf{F}$-pseudofunctor
$G\colon\left(\mathscr{A},\mathscr{A}_{T}\right)\to\left(\mathscr{M},\mathscr{M}_{T}\right)$
to more closely match the notation we will use use later on.
\end{rem}

The characterization of lax $\mathsf{F}$-adjunctions by universal
arrows is slightly technical, so we will break it into parts.
\begin{defn}
[Universal pair] Given an pseudofunctor $G\colon\mathscr{A}\to\mathscr{M}$,
object $F_{m}$ in $\mathscr{M}$, and a diagram 
\[
\xymatrix@R=0.6em{m\myar{f}{rr}\ar[rdd]_{\eta_{m}} & \; & GA\\
\\
 & GF_{m}\ar[uur]_{G\overline{f}}\utwocell[0.6]{uu}{\gamma_{f}}
}
\]
we say the pair $\left(\overline{f},\gamma_{f}\right)$ is \emph{universal}
if for any $\overline{g}\colon F_{m}\to A$ and 2-cell $\beta$ as
below
\[
\xymatrix@R=0.6em{m\myar{f}{rr}\ar[rdd]_{\eta_{m}} & \; & GA &  & m\myar{f}{rr}\ar[rdd]_{\eta_{m}} & \; & GA\\
 &  &  & =\\
 & GF_{m}\ar[uur]_{G\overline{g}}\utwocell[0.6]{uu}{\beta} &  &  &  & GF_{m}\ar[uur]^{G\overline{f}}\utwocell[0.6]{uu}{\gamma_{f}}\ar@/_{1.3pc}/[uur]_{G\overline{g}} & \ultwocell[0.4]{uul}{G\widetilde{\beta}}
}
\]
there exists a unique $\widetilde{\beta}\colon\overline{g}\Rightarrow\overline{f}$
such that the above equality holds.
\end{defn}

The following defines what one should think of as the ``universal
arrows'' of a lax $\mathsf{F}$-adjunction.

\begin{defn}
[$\mathsf{F}$-universal arrows] Given an $\mathsf{F}$-pseudofunctor
$G\colon\left(\mathscr{A},\mathscr{A}_{T}\right)\to\left(\mathscr{M},\mathscr{M}_{T}\right)$,
we say a morphism $\eta_{m}\colon m\to GF_{m}$ (where $F_{m}$ is
some object of $\mathscr{M})$ is \emph{universal }if for any 1-cell
$f\colon m\to GA$ there exists a $\overline{f}\colon F_{m}\to A$
and a 2-cell
\[
\xymatrix@R=0.6em{m\myar{f}{rr}\ar[rdd]_{\eta_{m}} & \; & GA\\
\\
 & GF_{m}\ar[uur]_{G\overline{f}}\utwocell[0.6]{uu}{\gamma_{f}}
}
\]
such that the pair $\left(\overline{f},\gamma_{f}\right)$ is universal.
We say that $\eta_{m}$ is $\mathsf{F}$-universal if in addition\footnote{The reader will of course notice that such a $\eta_{m}$ is unique
up to equivalence.}
\begin{enumerate}
\item[(i)]  the 1-cell $\eta_{m}$ is tight;
\item[(ii)]  for every tight 1-cell $f\colon m\to GA$ in $\mathscr{M}$, the
2-cell $\gamma_{f}$ is invertible and $\overline{f}\colon F_{m}\to A$
is tight;
\item[(iii)]  the diagram
\[
\xymatrix@R=0.6em{m\myar{\eta_{m}}{rr}\ar[rdd]_{\eta_{m}} & \; & GF_{m}\\
\\
 & GF_{m}\ar[uur]_{G1_{F_{m}}}\utwocell[0.6]{uu}{\textnormal{id}}
}
\]
exhibits $\left(1_{F_{m}},\textnormal{id}\right)$ as a universal
pair;
\item[(iv)]  for any universal pair $\left(\overline{f},\gamma_{f}\right)$,
the $G$-whiskering by a tight $g\colon A\to B$ 
\[
\xymatrix@R=0.6em{m\myar{f}{rr}\ar[rdd]_{\eta_{m}} & \; & GA\ar[r]^{Gg} & GB\\
\\
 & GF_{m}\ar[uur]_{G\overline{f}}\utwocell[0.6]{uu}{\gamma_{f}}\ar@/_{1pc}/[rruu]_{Gg\overline{f}} & \ar@{}[uu]|-{\cong}
}
\]
exhibits $\left(g\overline{f},Gg\cdot\gamma_{f}\right)$ as a universal
pair.
\end{enumerate}
\end{defn}

The following proposition makes precise the characterization of a
lax $\mathsf{F}$-adjunction in terms of universal arrows. 
\begin{prop}
\label{unilaxFadj} Given an $\mathsf{F}$-pseudofunctor $G\colon\left(\mathscr{A},\mathscr{A}_{T}\right)\to\left(\mathscr{M},\mathscr{M}_{T}\right)$,
$G$ has a left lax $\mathsf{F}$-adjoint as in Definition \ref{laxFadj}
if and only if both:
\begin{enumerate}
\item for every object $m$ in $\mathscr{M}$, there exists a $\mathsf{F}$-universal
1-cell $\eta_{m}\colon m\to GA$;
\item for all 1-cells $\mu$ and $\nu$ as below, $\overline{\eta_{k}\nu}\cdot\overline{\eta_{n}\mu}$
equipped with the 2-cell
\[
\xymatrix{m\myar{\eta_{m}}{rr}\ar[d]_{\mu}\dltwocell[0.5]{drr}{\gamma_{\eta_{n}\mu}} &  & GF_{m}\ar[d]|-{G\left(\overline{\eta_{n}\mu}\right)}\ar@/^{3pc}/[dd]^{G\left(\overline{\eta_{k}\nu}\cdot\overline{\eta_{n}\mu}\right)}\\
n\myar{\eta_{n}}{rr}\ar[d]_{\nu}\dltwocell[0.5]{drr}{\gamma_{\eta_{k}\nu}} &  & GF_{n}\ar[d]|-{G\left(\overline{\eta_{k}\nu}\right)}\ar@{}[r]|-{\cong\;\;} & \;\\
k\myard{\eta_{k}}{rr} &  & GF_{k}
}
\]
is universal.
\end{enumerate}
\end{prop}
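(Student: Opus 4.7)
The strategy is the standard back-and-forth between a lax $\mathsf{F}$-adjunction and its family of pointwise universal arrows, adapted from the classical $1$-categorical universal-arrow characterization of adjunctions, with the universal property of the proposition encoding precisely the data of the transpose.

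For the forward implication, I would take a given lax $\mathsf{F}$-adjunction $F \dashv G$ and extract the $\mathsf{F}$-universal arrows as the unit components $\eta_m \colon m \to GF_m$. Tightness of $\eta_m$ is built into the definition of a lax $\mathsf{F}$-natural transformation, and for each $f\colon m \to GA$ the transpose $\bar f := R_{m,A}(f)$ together with the counit of the hom adjunction $L_{m,A} \dashv R_{m,A}$ yields the $2$-cell $\gamma_f \colon G\bar f \cdot \eta_m \Rightarrow f$. The universal pair property is then a reformulation of the hom-adjunction unit/counit bijection. Clause (ii) reflects the fact that the underlying adjunction restricts to a biadjunction on tight maps, so that the restricted hom adjunction is an adjoint equivalence. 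Clause (iii) is the triangle identity evaluated at $h = 1_{F_m}$. Clause (iv) is pseudo-naturality of the hom adjunction in $A$ when restricted to tight arrows. Condition (2) then encodes the binary pseudofunctoriality constraint $F\nu \cdot F\mu \cong F(\nu\mu)$ of $F$: both sides present universal pairs for $\eta_k \cdot \nu\mu$, and the constraint is the induced isomorphism between their transposes.

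For the reverse implication, I would construct $F$ directly from the universal arrows. Set $F_m$ to be the codomain of $\eta_m$; define $F\mu := \overline{\eta_n \mu}$ for $\mu\colon m \to n$; and obtain $F\zeta \colon F\mu \Rightarrow F\mu'$ for a $2$-cell $\zeta\colon \mu \Rightarrow \mu'$ by applying the universal property of $\eta_m$ to $\eta_n \zeta$. The unitor of $F$ comes from clause (iii), the binary pseudofunctoriality constraint from condition (2), and tightness-preservation from clause (ii). The unit $\eta \colon 1 \Rightarrow GF$ is assembled from the $\eta_m$ with lax naturality $2$-cells $\gamma_{\eta_n \mu}$; by clauses (i) and (ii) this is a lax $\mathsf{F}$-natural transformation. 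The hom adjunctions $L_{m,A}(h) := Gh \cdot \eta_m \dashv R_{m,A}(f) := \bar f$ have triangle identities that reduce to clauses (iii) and (iv) and to uniqueness in the universal pair property, and on tight maps degenerate to an adjoint equivalence by clause (ii), producing the required lax $\mathsf{F}$-adjunction.

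The main obstacle I anticipate is verifying the coherence axioms---pseudofunctoriality of $F$, lax $\mathsf{F}$-naturality of $\eta$, and the biadjunction coherences in $\mathsf{F}\textnormal{-}\mathbf{Bicat}$---all of which translate into pasting equalities of $2$-cells in $\mathscr{A}$, whereas the universal property only directly controls $2$-cells in $\mathscr{M}$. I would discharge these via a reflection lemma extracted from the uniqueness clause of the universal pair property: parallel $2$-cells $\alpha, \alpha'\colon h \Rightarrow h'$ in $\mathscr{A}$ coincide whenever $\gamma_{Gh'\cdot\eta_m} \cdot (G\alpha \cdot \eta_m) = \gamma_{Gh'\cdot\eta_m} \cdot (G\alpha' \cdot \eta_m)$. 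Each coherence axiom then reduces to a routine equation among chosen universal factorizations together with the universal pair supplied by condition (2).
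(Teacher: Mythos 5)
Your proposal is correct and follows essentially the same route as the paper, which itself only sketches the argument in the remark following the proposition: unit components as the $\mathsf{F}$-universal arrows, the universal-pair property as the hom-adjunction $L_{m,A}\dashv R_{m,A}$, clauses (iii)--(iv) accounting for the triangle identities, clause (ii) for the restriction to a biadjunction on tight maps, and condition (2) for the pseudofunctoriality constraint of $F$. One small caveat: your ``reflection lemma'' as stated needs the target $1$-cell $h'$ to underlie a universal pair (so that the uniqueness clause applies), which is not automatic for arbitrary $h'$ but does hold in every instance you actually need by (iii), (iv) and condition (2), so the coherence verifications go through as you intend.
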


We will not give all the technical details of the proof, but the following
remark should convince the reader of this characterization.
\begin{rem}
It comes for free that for all $A\in\mathscr{A}$, the universal pair
\[
\xymatrix@R=0.6em{GA\myar{1_{GA}}{rr}\ar[rdd]_{\eta_{GA}} & \; & GA\\
\\
 & GF_{GA}\ar[uur]_{G\overline{\textnormal{id}}}\utwocell[0.6]{uu}{\gamma_{1_{GA}}}
}
\]
has the 2-cell component $\gamma_{1_{GA}}$ invertible (as identity
1-cells are necessarily tight). This is one of the triangle identities.
The other triangle identity which asks for the composite of $F_{\eta_{m}}$
and $\epsilon_{F_{m}}$ constructed as below
\[
\xymatrix{m\myar{\eta_{m}}{rr}\ar[d]_{\eta_{m}}\dltwocell[0.5]{drr}{\gamma_{\eta_{GF_{m}}\eta_{m}}} &  & GF_{m}\ar[d]|-{GF_{\eta_{m}}}\\
GF_{m}\myar{\eta_{GF_{m}}}{rr}\dltwocell[0.5]{drr}{\gamma_{1_{GF_{m}}}}\ar@/_{1pc}/[rrd]_{1_{GF_{m}}} &  & GFGF_{m}\ar[d]|-{G\epsilon_{Fm}}\\
 &  & GF_{m}
}
\]
to be isomorphic to the identity, is equivalent to (iii) in the presence
of (iv). Pseudofunctoriality of $F$ is clear from (2) and (iii).

The reader will also recognize that $L_{m,A}$ and $R_{m,A}$ are
pseudonatural in $A\in\mathscr{A}$ and $m\in\mathscr{M}$ respectively;
and also pseudonatural in $m\in\mathscr{M}_{T}$ and $A\in\mathscr{A}_{T}$
respectively. Indeed, $L_{m,A}\colon\mathscr{A}\left(F_{m},A\right)\to\mathscr{M}\left(m,GA\right)$
is defined by applying $G$ and composing with $\eta_{m}$, and $R_{m,A}\colon\mathscr{M}\left(m,GA\right)\to\mathscr{A}\left(F_{m},A\right)$
is defined by applying $F$ and composing with $\epsilon_{A}$. Also,
it is not hard to see that $\eta$ and $\epsilon$ become lax $\mathsf{F}$-natural
transformations given the universal arrow viewpoint. Finally, it is
worth noting that each $\gamma$ is invertible if and only if the
unit $\eta$ is pseudonatural.
\end{rem}

The following property of lax $\mathsf{F}$-adjunctions, that the
operations $\widetilde{\left(-\right)}$ respect isomorphisms, will
be useful later in this section.
\begin{lem}
\label{tildepresiso} Given a pseudofunctor $G\colon\mathscr{A}\to\mathscr{M}$
with a left lax $\mathsf{F}$-adjoint $\left(F,\eta,\gamma\right)$,
the operation $\beta\mapsto\widetilde{\beta}$ respects isomorphisms
on tight maps.
\end{lem}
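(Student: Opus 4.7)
The plan is to produce an explicit two-sided inverse for $\widetilde{\beta}$, built from $\beta^{-1}$ using a second universal pair supplied by conditions (iii) and (iv) in the definition of $\mathsf{F}$-universal. Assume $\overline{f},\overline{g}\colon F_m\to A$ are tight and $\beta\colon G\overline{g}\cdot\eta_m\Rightarrow f$ is invertible. Since tight 1-cells are closed under composition and under invertible 2-cells in an $\mathsf{F}$-bicategory, the 1-cell $f\cong G\overline{g}\cdot\eta_m$ is itself tight; hence by condition (ii), the comparison 2-cell $\gamma_f$ is also invertible.

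Next, I would invoke (iii), which exhibits $(1_{F_m},\textnormal{id})$ as a universal pair at $\eta_m$, together with (iv) applied with the tight 1-cell $\overline{g}$, to conclude that $(\overline{g},\textnormal{id})$ is a universal pair at the composite $G\overline{g}\cdot\eta_m\colon m\to GA$. Applying the universal property of this new pair to the invertible 2-cell $\beta^{-1}\cdot\gamma_f\colon G\overline{f}\cdot\eta_m\Rightarrow G\overline{g}\cdot\eta_m$ yields a unique $\zeta\colon\overline{f}\Rightarrow\overline{g}$ satisfying $G\zeta\cdot\eta_m=\beta^{-1}\cdot\gamma_f$.

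To finish, I would verify $\widetilde{\beta}\cdot\zeta=1_{\overline{f}}$ and $\zeta\cdot\widetilde{\beta}=1_{\overline{g}}$. For each equation, $G$-whiskering the left-hand composite by $\eta_m$ and using the defining relation $G\widetilde{\beta}\cdot\eta_m=\gamma_f^{-1}\cdot\beta$ produces the identity 2-cell after cancellation of $\beta$ with $\beta^{-1}$ and $\gamma_f$ with $\gamma_f^{-1}$; uniqueness in the universal property of the relevant pair, namely $(\overline{f},\gamma_f)$ for the first equation and $(\overline{g},\textnormal{id})$ for the second, then forces each composite to equal the identity. The only real subtlety I anticipate is the bookkeeping of 2-cell directions and whiskering conventions, so that the pasted composites genuinely cancel; conceptually, the argument is just that the tightness of $\overline{g}$ unlocks the auxiliary universal pair $(\overline{g},\textnormal{id})$, and this is precisely what allows one to ``invert'' the operation $\beta\mapsto\widetilde{\beta}$.
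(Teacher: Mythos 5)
Your proposal is correct and follows essentially the same route as the paper's proof: both use axioms (iii) and (iv), applied with the tight 1-cell $\overline{g}$, to exhibit $\left(\overline{g},\textnormal{id}\right)$ as a universal pair for $G\overline{g}\cdot\eta_{m}$, induce the candidate inverse of $\widetilde{\beta}$ from $\beta^{-1}$ pasted with $\gamma_{f}$, and conclude by the uniqueness clauses of the two universal pairs. Your preliminary step (tightness of $f$, hence invertibility of $\gamma_{f}$ by (ii)) is sound but superfluous, since both cancellations can be carried out without ever inverting $\gamma_{f}$.
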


\begin{proof}
Suppose we have an equality as below where $\overline{g}\colon F_{m}\to A$
is tight
\[
\xymatrix@R=0.6em{m\myar{f}{rr}\ar[rdd]_{\eta_{m}} & \; & GA &  & m\myar{f}{rr}\ar[rdd]_{\eta_{m}} & \; & GA\\
 &  &  & =\\
 & GF_{m}\ar[uur]_{G\overline{g}}\utwocell[0.6]{uu}{\beta} &  &  &  & GF_{m}\ar[uur]^{G\overline{f}}\utwocell[0.6]{uu}{\gamma_{f}}\ar@/_{1.3pc}/[uur]_{G\overline{g}} & \ultwocell[0.35]{uul}{Gb}
}
\]
and suppose further that $\beta$ has an inverse, so that we have
the equality
\[
\xymatrix@R=0.6em{m\ar[r]^{\eta_{m}}\ar@/_{1.4pc}/[rr]|-{f}\ar[rdd]_{\eta_{m}} & GF_{m}\ar[r]^{G\overline{g}} & GA &  & m\ar[r]^{\eta_{m}}\ar[rdd]_{\eta_{m}} & GF_{m}\ar[r]^{G\overline{g}} & GA\\
 & \utwocell[0.5]{u}{\beta^{-1}} &  & =\\
 & GF_{m}\ar[uur]_{G\overline{f}}\utwocell[0.6]{u}{\gamma_{f}} &  &  &  & GF_{m}\ar[uur]^{G\overline{g}}\utwocell[0.6]{uu}{\textnormal{id}}\ar@/_{1.3pc}/[uur]_{G\overline{f}} & \ultwocell[0.35]{uul}{Ga}
}
\]
where we have used axioms (iii) and (iv) to realize the pair consisting
of the identity 2-cell (given on the right above) and $\overline{g}$
as universal. It is then straightforward to verify $a$ is inverse
to $b$.
\end{proof}
\begin{rem}
It is not hard to see that in the presence of axiom (iv), the above
lemma is equivalent to (iii).
\end{rem}

\subsection{Factoring through the spectrum}

We now have the necessary background on lax adjunctions, and can move
towards understanding how a lax familial pseudofunctor factors through
the spectrum. This will only require the following simple lemma.
\begin{lem}
\label{cart2cell} Suppose $V\colon\mathscr{M}\to\mathscr{B}$ is
a locally discrete fibration of bicategories. Then given any 2-cell
$\alpha\colon f\Rightarrow g\colon X\to Vm$ as on the right below
\[
\xymatrix@R=1em{f^{*}m\ar[rrd]^{f_{c}}\ar[dd]_{\hat{\alpha}} &  &  &  & X\ar[rrd]^{f}\ar[dd]_{\textnormal{id}}\\
\dtwocell[0.4]{rr}{\overline{\textnormal{\ensuremath{\alpha}}}} &  & m & \mapsto & \dtwocell[0.4]{rr}{\textnormal{\ensuremath{\alpha}}} &  & Vm\\
g^{*}m\ar[rru]_{g_{c}} &  &  &  & X\ar[rru]_{g}
}
\]
with cartesian lifts $f_{c}$ and $g_{c}$ of $f$ and $g$, there
exists a unique pair $\left(\hat{\alpha},\overline{\alpha}\right)$
as on the left above which is sent to $\alpha$ by $V$. Moreover,
if $\alpha$ is invertible then both $\hat{\alpha}$ and $\overline{\alpha}$
are.
\end{lem}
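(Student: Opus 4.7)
The plan is to invoke the bicategorical cartesian universal property that the locally discrete fibration $V$ endows on its cartesian 1-cells. Specifically, combining the 1-cell cartesianness of $g_c$ with the fact that the local hom-functor $V_{A,m}\colon \mathscr{M}(A,m)\to \mathscr{B}(VA,Vm)$ is a discrete fibration, one obtains the following universal property of $g_c$: given any $\psi\colon A\to m$ in $\mathscr{M}$, any $r\colon VA\to X$ in $\mathscr{B}$, and any 2-cell $\beta\colon V\psi\Rightarrow g\cdot r$ in $\mathscr{B}$, there exists a unique pair $(\overline{r}\colon A\to g^{*}m,\,\overline{\beta}\colon \psi\Rightarrow g_c\cdot\overline{r})$ in $\mathscr{M}$ with $V(\overline{r})=r$ and $V(\overline{\beta})=\beta$. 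The 1-cell component comes from the cartesian lift of $r$, and the 2-cell component is then forced by the discrete fibration $V_{A,m}$, which lifts $\beta$ uniquely with codomain (a coherent composite involving) $g_c\cdot\overline{r}$.

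For existence and uniqueness of the pair $(\hat{\alpha},\overline{\alpha})$, I would apply this universal property of $g_c$ to the inputs $A=f^{*}m$, $\psi=f_c$, $r=\textnormal{id}_X$, and $\beta=\alpha$, viewing $\alpha$ as a 2-cell $f=V(f_c)\Rightarrow g\cdot\textnormal{id}_X\cong g$ via the right unitor. The output is exactly the required pair $(\hat{\alpha}\colon f^{*}m\to g^{*}m,\,\overline{\alpha}\colon f_c\Rightarrow g_c\cdot\hat{\alpha})$ with $V(\hat{\alpha})=\textnormal{id}_X$ and $V(\overline{\alpha})=\alpha$, and uniqueness is immediate from the uniqueness clause of the property.

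For the invertibility claim, suppose $\alpha$ is invertible. I would dualize the construction by applying the symmetric universal property of $f_c$ to the inputs $(\psi=g_c,\,r=\textnormal{id}_X,\,\beta=\alpha^{-1})$, obtaining a pair $(\hat{\alpha}',\,\overline{\alpha}'\colon g_c\Rightarrow f_c\cdot\hat{\alpha}')$. The composites $\hat{\alpha}'\cdot\hat{\alpha}$ and $\hat{\alpha}\cdot\hat{\alpha}'$ both lie over $\textnormal{id}_X$, and pasting $\overline{\alpha}$ with $\overline{\alpha}'\hat{\alpha}$ (respectively $\overline{\alpha}'$ with $\overline{\alpha}\hat{\alpha}'$) yields 2-cells lifting $\alpha^{-1}\cdot\alpha=\textnormal{id}_f$ and $\alpha\cdot\alpha^{-1}=\textnormal{id}_g$ respectively; applying the uniqueness clause of $f_c$ and $g_c$'s universal property to the trivial pair $(\textnormal{id},\textnormal{id})$ over $(\textnormal{id}_X,\textnormal{id})$, these composites must agree with the identities up to coherence, so $\hat{\alpha}$ is an equivalence with pseudo-inverse $\hat{\alpha}'$. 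As for $\overline{\alpha}$ itself, its image $\alpha$ under the discrete fibration $V_{f^{*}m,m}$ is invertible, and a discrete fibration reflects invertibility (lift the inverse, then use uniqueness of lifts of identity 2-cells on the two composites), so $\overline{\alpha}$ is invertible.

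The main hurdle I expect is purely bookkeeping: correctly tracking the unitor $g\cdot\textnormal{id}_X\cong g$ and the associators for the iterated whiskerings used in the pasting step, so that the uniqueness clauses apply to the pairs exactly as formulated. Once these coherences are handled, the lemma is a direct consequence of the universal property of cartesian 1-cells in the bicategorical fibration $V$.
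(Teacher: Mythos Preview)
Your proof is correct and proceeds by a genuinely different route from the paper's. The paper argues concretely: it invokes the bicategorical Grothendieck correspondence to assume without loss of generality that $V$ is the projection $\int_{\textnormal{lax}}^{B\in\mathscr{B}}FB\to\mathscr{B}$ for a pseudofunctor $F\colon\mathscr{B}^{\textnormal{op}}\to\mathbf{Cat}$, and then writes down the unique $\lambda$ in $FX$ making the evident triangle commute, from which both uniqueness and the invertibility clause are immediate by inspection. Your argument instead stays at the level of the abstract universal property of cartesian 1-cells, combined with the observation that each hom-functor $V_{A,m}$ is a discrete fibration, and deduces invertibility by the usual ``construct the inverse and compare composites via uniqueness'' manoeuvre. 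The paper's approach is shorter and avoids the coherence bookkeeping you anticipate (unitors and associators disappear once one computes in the explicit model), while your approach has the advantage of not relying on the Grothendieck correspondence and making transparent exactly which universal properties are doing the work. Either is perfectly adequate for this lemma.
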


\begin{proof}
Suppose without loss of generality that $V$ is the projection $\int_{\textnormal{lax}}^{B\in\mathscr{B}}FB\to\mathscr{B}$
for a pseudofunctor $F\colon\mathscr{B}^{\textnormal{op}}\to\mathbf{Cat}$.
Then we may construct a diagram as on the left below
\[
\xymatrix@R=1em{\left(X,a\right)\ar[rrd]^{\left(f,\cong\right)}\ar[dd]_{\left(1,\lambda\right)} &  &  &  & X\ar[rrd]^{f}\ar[dd]_{\textnormal{id}}\\
\dtwocell[0.4]{rr}{\alpha} &  & \left(Y,m\right) & \mapsto & \dtwocell[0.4]{rr}{\textnormal{\ensuremath{\alpha}}} &  & V\left(Y,m\right)\\
\left(X,b\right)\ar[rru]_{\left(g,\cong\right)} &  &  &  & X\ar[rru]_{g}
}
\]
where $\lambda$ is the unique map such that
\[
\xymatrix{a\myar{\cong}{r} & Ff\left(m\right)\myar{\left(F\alpha\right)_{m}}{r} & Fg\left(m\right) & = & a\myar{\lambda}{r} & b\myar{\cong}{r} & Fg\left(m\right)}
\]
holds. It is clear this is the only choice of $\lambda$, and that
if $\alpha$ is invertible then so is $\lambda$.
\end{proof}
\begin{rem}
There should be an analogue of the above without assuming $V$ to
be locally discrete, so that $V$ is the projection $\int_{\textnormal{lax}}^{B\in\mathscr{B}}FB\to\mathscr{B}$
for a trifunctor $F\colon\mathscr{B}^{\textnormal{op}}\to\mathbf{Bicat}$.
However, this is beyond the scope of this paper. 
\end{rem}

We can now prove the main result of this section, which provides a
conceptually nice description of lax familial pseudofunctors. Recall
also that the tight maps of a bicategory are something we must specify,
and not part of the data of the original bicategory.
\begin{thm}
[Spectrum factorization]\label{specfacthm} Let $T\colon\mathscr{A}\to\mathscr{B}$
be a pseudofunctor of bicategories. Then the following are equivalent: 
\begin{enumerate}
\item the pseudofunctor $T$ is lax familial; 
\item there exists a factorization
\[
\xymatrix@R=1em{\mathscr{A}\ar[rr]^{T}\ar[rd]_{G} &  & \mathscr{B}\\
 & \mathscr{M}\ar[ur]_{V}
}
\]
such that: 
\begin{enumerate}
\item $V$ is a locally discrete fibration of bicategories;
\item $G$ has a left lax $\mathsf{F}$-adjoint (where all 1-cells in $\mathscr{A}$
are tight and the $V$-cartesian 1-cells of $\mathscr{M}$ are tight).
\end{enumerate}
\end{enumerate}
\end{thm}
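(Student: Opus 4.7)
The plan is to realize the desired factorization via the Grothendieck construction of the spectrum $\mathbf{Spec}_T$. For the direction $(1)\Rightarrow(2)$, set $\mathscr{M}=\int_{\textnormal{lax}}^{X\in\mathscr{B}}\mathfrak{M}_X$ as described in Lemma~\ref{constructspectrum}, with $V\colon\mathscr{M}\to\mathscr{B}$ the canonical projection, and declare as tight exactly the $V$-cartesian morphisms, i.e.\ those $(f,h,\gamma)$ with $\gamma$ invertible. That $V$ is a locally discrete fibration of bicategories is immediate from Lemma~\ref{constructspectrum}: each $\nu\colon f\Rightarrow g$ in $\mathscr{B}$ lifts to a unique 2-cell in $\mathscr{M}$ via the uniquely induced $\overline{\nu}\colon h\Rightarrow k$, and cartesian 1-cells are precisely the ones declared tight. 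Define $G\colon\mathscr{A}\to\mathscr{M}$ on objects by $A\mapsto(TA,1_{TA})$ (noting $1_{TA}$ is an equivalence, hence lax-generic) and on morphisms by $h\mapsto(Th,h,\textnormal{id})$; this is an $\mathsf{F}$-pseudofunctor with $V\circ G=T$ strictly.

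Next, the unit is $\eta_{(X,\delta)}=(\delta,1_A,\textnormal{id})\colon(X,\delta)\to(TA,1_{TA})=GA$, which is tight. An arbitrary 1-cell $(f,k,\phi)\colon(X,\delta)\to GB$ is precisely the data of a generic cell $(k,\phi)$ with $\phi\colon Tk\cdot\delta\Rightarrow f$, so we set $\overline{(f,k,\phi)}=k$ with universal 2-cell $\phi$; existence and uniqueness of the induced $\widetilde{\beta}$ is exactly the universal property of the generic cell $(k,\phi)$ in the sense of Definition~\ref{deflaxgenericcell}, which is in turn part of the lax-generic structure on $\delta$. Axiom (i) is trivial; axiom (ii) records that a cartesian input $(f,k,\phi)$ has $\phi$ invertible and so $\overline{f}=k$ is tight in $\mathscr{A}$; axiom (iii) is the direct observation that the universal factorization of $\eta_{(X,\delta)}$ through itself is $(1_{F_m},\textnormal{id})$; and axiom (iv), i.e.\ the preservation of universal pairs under whiskering by $Gg$ for tight $g$, amounts to closure of generic cells under whiskering by arbitrary $Tg$, which is condition (2)(b) of Theorem~\ref{laxequivT}. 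Condition (2) of Proposition~\ref{unilaxFadj} follows in the same way from composability of generic cells.

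For the converse $(2)\Rightarrow(1)$, since $V$ is a locally discrete fibration it is, up to equivalence, the projection out of the Grothendieck construction of a pseudofunctor $\mathfrak{M}_{(-)}\colon\mathscr{B}^{\textnormal{op}}\to\mathbf{Cat}$, with $\mathfrak{M}_X$ the (1-categorical) fiber over $X$. For each $m\in\mathfrak{M}_X$ we will show that $V\eta_m\colon X\to TF_m$ is a lax-generic morphism, whence $\mathfrak{M}_{(-)}$ is the spectrum of $T$. Lax-generic factorizations of an arbitrary $f\colon X\to TA$ are produced by first taking a $V$-cartesian lift $\widetilde{f}\colon m_f\to GA$ of $f$; since $\widetilde{f}$ is tight, axiom (ii) of the lax $\mathsf{F}$-adjunction yields a tight $\overline{\widetilde{f}}\colon F_{m_f}\to A$ in $\mathscr{A}$ together with an invertible $\gamma_{\widetilde{f}}\colon G\overline{\widetilde{f}}\cdot\eta_{m_f}\Rightarrow\widetilde{f}$, whose image under $V$ gives $T\overline{\widetilde{f}}\cdot V\eta_{m_f}\cong f$. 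The 2-cell universality required by Definition~\ref{deflaxgenericT} is established by lifting 2-cells from $\mathscr{B}$ into $\mathscr{M}$ via Lemma~\ref{cart2cell}, applying the universal arrow property of $\eta_{m_f}$ to obtain 2-cells in $\mathscr{A}$, and invoking Lemma~\ref{tildepresiso} to preserve invertibility (giving clause (3) of Definition~\ref{deflaxgenericT}). The closure of generic cells under composition required by Theorem~\ref{laxequivT}(2)(b) corresponds to axiom (iv) together with condition (2) of Proposition~\ref{unilaxFadj}.

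The main obstacle will be the careful correspondence between the universal properties on either side: matching the clauses of Definition~\ref{deflaxgenericT} (universality of lifts, universality of 2-cell factorizations, preservation of invertibility) against the four axioms of $\mathsf{F}$-universality, and exploiting local discreteness of $V$ (not merely being a fibration) through Lemma~\ref{cart2cell} to recover the uniqueness clauses. In particular, verifying in $(1)\Rightarrow(2)$ that the composition of two universal arrows is again universal, and in $(2)\Rightarrow(1)$ that every lax-generic out of $X$ arises (up to the chosen equivalence class) as some $V\eta_m$, are the delicate points where the interplay between tightness, $V$-cartesianness, and genericity must be handled carefully.
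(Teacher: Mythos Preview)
Your direction $(2)\Rightarrow(1)$ matches the paper's approach closely and is essentially correct: lift 1-cells and 2-cells along the locally discrete fibration via Lemma~\ref{cart2cell}, apply the $\mathsf{F}$-universal arrows, and deduce clauses (1)--(3) of Definition~\ref{deflaxgenericT} from the axioms of $\mathsf{F}$-universality, with Lemma~\ref{tildepresiso} handling invertibility.

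However, your direction $(1)\Rightarrow(2)$ contains a genuine error. You define $G(A)=(TA,1_{TA})$, justifying this by ``$1_{TA}$ is an equivalence, hence lax-generic''. But being an equivalence in $\mathscr{B}$ has no bearing on lax-genericity: the classification in Theorem~\ref{p5:laxequiv} says the el-generic objects correspond to $(m,A,x)$ with $x\colon P_m\to A$ an \emph{equivalence in $\mathscr{A}$}, not that the morphism in $\mathscr{B}$ is an equivalence. To see this fail concretely, specialize to locally discrete $\mathscr{A},\mathscr{B}$ so that lax familial coincides with familial in the 1-dimensional sense, and take $T=(-)\times 2\colon\mathbf{Set}\to\mathbf{Set}$. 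This is familial (indeed a right adjoint), yet $1_{X\times 2}$ is not generic: the swap map on the second factor gives a square through which no $h\colon X\to X$ with $Th=h\times 1$ can factor. Thus $(TA,1_{TA})$ is not in general an object of $\mathscr{M}=\int_{\textnormal{lax}}^{X}\mathfrak{M}_X$, and your $G$ is not well-defined.

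The paper repairs this by setting $G(A)=(TA,\delta_A)$, where $\delta_A\colon TA\to T\overline{A}$ is the generic part of a chosen generic factorization $1_{TA}\cong Te_A\cdot\delta_A$. Everything downstream then carries an extra layer of generic factorizations: $G(h)$ is obtained from the generic factorization of $\delta_B\cdot Th$, and the unit $\eta_{(X,\delta)}$ is built from the generic factorization of $\delta_A\cdot\delta$. This is precisely the complexity you were trying to avoid, but it is unavoidable given that identities need not be generic. Once you make this correction, the verification of the $\mathsf{F}$-universality axioms proceeds as you outlined, though the bookkeeping of the comparison 2-cells $e_A$, $\overline{h}$, etc.\ becomes substantially more involved than your sketch suggests.
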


\begin{proof}
$\left(2\right)\Rightarrow\left(1\right)\colon$ We first note that
for any $f\colon X\to TA$ in $\mathscr{B}$, we have a cartesian
lift $f_{c}\colon m\to GA$ in $\mathscr{M}$. We thus have an assignment
\[
\xymatrix@R=1em{m\ar[ddr]_{\eta_{m}}\ar[rr]^{f_{c}} & \; & GA &  & X\ar[ddr]_{\delta_{m}}\ar[rr]^{f} & \; & TA\\
 &  &  & \mapsto\\
 & GF_{m}\ar[uur]_{G\overline{f_{c}}}\utwocell[0.6]{uu}{\gamma_{f_{c}}} &  &  &  & TF_{m}\ar[uur]_{T\overline{f_{c}}}\utwocell[0.6]{uu}{V\gamma_{f_{c}}}
}
\]
and as $\gamma$ is invertible on cartesian maps, this is a factorization
of $f$. We thus need only check that each $\delta_{m}$ is lax-generic,
and that generic 2-cells compose.

Consider now a 2-cell $\alpha$ as on the right below
\[
\xymatrix@R=1em{n\ar[dd]_{\eta_{n}}\ar[r]^{\hat{\alpha}} & m\ar[r]^{f_{c}} & GA\ar[dd]^{Gk} &  & X\ar[dd]_{\delta_{n}}\ar[rr]^{f} &  & TA\ar[dd]^{Tk}\\
 &  &  & \mapsto\\
GF_{n}\ar[rr]_{Gh}\utwocell[0.5]{rruu}{\overline{\alpha}} &  & GC &  & TF_{n}\ar[rr]_{Th}\utwocell[0.5]{rruu}{\alpha} &  & TC
}
\]
and its unique preimage as on the left above given by Lemma \ref{cart2cell}.
This $\overline{\alpha}$ in turn has a factorization as on the left
below
\[
\xymatrix@R=1em{n\ar[dd]_{\eta_{n}}\ar[r]^{\hat{\alpha}} & m\ar[r]^{f_{c}} & GA\ar[dd]^{Gk} &  & X\ar[dd]_{\eta_{n}}\ar[rr]^{f} &  & TA\ar[dd]^{Tk}\\
 & \utwocell[0.35]{ul}{\gamma_{f_{c}\hat{\alpha}}} &  & \mapsto &  & \utwocell[0.45]{ul}{V\gamma_{f_{c}\hat{\alpha}}}\\
GF_{n}\ar[rr]_{Gh}\ar[uurr]_{G\overline{f_{c}\hat{\alpha}}} &  & GC\utwocell[0.5]{ul}{G\xi} &  & TF_{n}\ar[rr]_{Th}\ar[uurr]_{T\overline{f_{c}\hat{\alpha}}} &  & TC\utwocell[0.5]{ul}{T\xi}
}
\]
since universality of $\left(f_{c}\hat{\alpha},\gamma_{f_{c}\hat{\alpha}}\right)$
is preserved by $Gk$, thus giving a factorization of $\alpha$ as
on the right above. Note that if $\alpha$, and hence $\hat{\alpha}$
and $\overline{\alpha}$ are invertible, then $\gamma_{f_{c}\hat{\alpha}}$
is invertible (as it is on all cartesian 1-cells), and $\xi$ is invertible
by Lemma \ref{tildepresiso}.

Given another factorization as on the right below, we can lift $\sigma$
by Lemma \ref{cart2cell}
\[
\xymatrix@R=1em{n\ar[dd]_{\eta_{n}}\ar[r]^{\hat{\sigma}} & m\ar[r]^{f_{c}} & GA\ar[dd]^{Gk} &  & X\ar[dd]_{\eta_{n}}\ar[rr]^{f} &  & TA\ar[dd]^{Tk}\\
 & \utwocell[0.35]{ul}{\overline{\sigma}} &  & \mapsto &  & \utwocell[0.45]{ul}{\sigma}\\
GF_{n}\ar[rr]_{Gh}\ar[uurr]_{G\overline{g}} &  & GC\utwocell[0.5]{ul}{G\varphi} &  & TF_{n}\ar[rr]_{Th}\ar[uurr]_{T\overline{g}} &  & TC\utwocell[0.6]{ul}{T\varphi}
}
\]
giving the left above. Noting that $\hat{\sigma}=\hat{\alpha}$ and
that the left pasting above is $\overline{\alpha}$ by uniqueness,
we can then factor $\overline{\sigma}$ through $\gamma_{f_{c}\hat{\alpha}}$
recovering a comparison map $\psi\colon\overline{g}\Rightarrow\overline{f_{c}\hat{\alpha}}$
satisfying the required conditions. The subterminality of each $V\gamma_{f_{c}\hat{\alpha}}$
stems from the uniqueness of factorizations through $\gamma_{f_{c}\hat{\alpha}}$.

Finally, to see that generic cells compose, observe that a cell as
on the right below
\[
\xymatrix@R=1em{n\ar[dd]_{\hat{\gamma}}\ar[rr]^{\eta_{n}} &  & GF_{n}\ar[dd]^{Gh} &  &  &  & TF_{n}\ar[dd]^{Th}\\
 &  & \dtwocell[0.5]{ll}{\overline{\gamma}} & \mapsto & X\ar[rrd]_{z}\ar[rru]^{\delta_{n}} &  & \dtwocell[0.4]{ll}{\gamma}\\
m\ar[rr]_{z_{c}} &  & GC &  &  &  & TC
}
\]
is generic precisely when its lift as on the left above, given by
Lemma \ref{cart2cell}, exhibits $\left(h,\overline{\gamma}\right)$
as a universal pair. Also observe that every generic is of the form
$\delta_{n}$, since given any generic $\delta$ and cartesian lift
$\delta_{c}$ we have an isomorphism
\[
\xymatrix@R=1em{m\ar[ddr]_{\eta_{m}}\ar[rr]^{\delta_{c}} & \; & GA &  & X\ar[ddr]_{\delta_{m}}\ar[rr]^{\delta} & \; & TA\\
 &  &  & \mapsto\\
 & GF_{m}\ar[uur]_{G\overline{\delta_{c}}}\utwocell[0.6]{uu}{\gamma_{\delta_{c}}} &  &  &  & TF_{m}\ar[uur]_{T\overline{\delta_{c}}}\utwocell[0.6]{uu}{V\gamma_{\delta_{c}}}
}
\]
and we know that $\left(\overline{\delta_{c}},V\gamma_{\delta_{c}}\right)$
is an equivalence by Lemma \ref{opcartequiv}. It follows that two
generic cells as on the right below
\[
\xymatrix@R=1em{n\ar[d]_{\hat{\gamma}}\ar[rr]^{\eta_{n}} &  & GF_{n}\ar[dd]^{Gh} &  & X\ar[rr]^{\delta_{n}}\ar[dd]_{f} &  & TF_{n}\ar[dd]^{Th}\\
\bullet\ar[d]_{f_{c}} &  & \dtwocell[0.5]{ll}{\overline{\gamma}} &  &  &  & \dtwocell[0.5]{ll}{\gamma}\\
m\ar[rr]_{\eta_{m}}\ar[d]_{\hat{\phi}} &  & GF_{m}\ar[dd]^{Gk} & \mapsto & Y\ar[rr]^{\delta_{m}}\ar[dd]_{g} &  & TF_{m}\ar[dd]^{Tk}\\
\bullet\ar[d]_{g_{c}} &  & \dtwocell[0.5]{ll}{\overline{\phi}} &  &  &  & \dtwocell[0.5]{ll}{\phi}\\
w\ar[rr]_{\eta_{w}} &  & GF_{w} &  & Z\ar[rr]_{\delta_{w}} &  & TF_{w}
}
\]
compose to a generic, as the composite on the left above is universal.

$\left(1\right)\Rightarrow\left(2\right)\colon$ Supposing that $T\colon\mathscr{A}\to\mathscr{B}$
is lax familial, we may construct the spectrum $\mathfrak{M}_{\left(-\right)}\colon\mathscr{B}^{\textnormal{op}}\to\mathbf{Cat}$
as in Lemma \ref{constructspectrum} and factor $T$ as 
\[
\xymatrix@R=1em{\mathscr{A}\myar{G}{r} & {\displaystyle \int_{\textnormal{lax}}^{X\in\mathscr{B}}\mathfrak{M}_{\left(-\right)}}\myar{V}{r} & \mathscr{B}}
\]
where $G$ sends each $A\in\mathscr{A}$ to $TA\in\mathscr{B}$ with
the generic morphism $\delta_{A}\colon TA\to T\overline{A}$ being
part of the generic factorization
\[
\xymatrix@R=1em{TA\ar[r]^{\delta_{A}} & T\overline{A}\ar[r]^{Te_{A}} & TA}
\]
of the identity. We choose all morphisms of $\mathscr{A}$ to be tight,
and the cartesian morphisms against the projection $V$ to be tight.
A 1-cell $h\colon A\to B$ in $\mathscr{A}$ is sent to $Th$ with
the pair $\left(\overline{h},\cong\right)$ comprising the left side
\[
\xymatrix@R=1em{TA\ar[r]^{\delta_{A}}\ar[d]_{Th}\ar@{}[rd]|-{\cong} & T\overline{A}\ar[r]^{Te_{A}}\ar[d]^{T\overline{h}}\ar@{}[rd]|-{\cong} & TA\ar[d]^{Th}\\
TB\ar[r]_{\delta_{B}} & T\overline{B}\ar[r]_{Te_{B}} & TB
}
\]
of the generic factorization above. A given 2-cell $\lambda\colon h\Rightarrow k$
is sent to $T\lambda\colon Th\Rightarrow Tk$, which satisfies 
\[
\xymatrix@R=1em{TA\ar@/_{0.7pc}/[dd]_{Tk}\ar@/^{0.7pc}/[dd]^{Th}\ar[r]^{\delta_{A}} & T\overline{A}\ar[dd]^{T\overline{h}}\ar@{}[ldd]|-{\quad\cong} &  & TA\ar[dd]_{Tk}\ar[r]^{\delta_{A}}\ar@{}[rdd]|-{\cong\quad} & T\overline{A}\ar@/_{0.7pc}/[dd]_{T\overline{k}}\ar@/^{0.7pc}/[dd]^{T\overline{h}}\\
\ltwocell[0]{}{T\lambda} &  & = &  & \ltwocell[0]{}{T\overline{\lambda}}\\
TB\ar[r]_{\delta_{B}} & T\overline{B} &  & TB\ar[r]_{\delta_{B}} & T\overline{B}
}
\]
for some (necessarily unique) $\overline{\lambda}\colon\overline{h}\Rightarrow\overline{k}$.
To see this, note that the left diagram has a generic factorization
\[
\xymatrix@R=1em{TA\ar@/_{0.7pc}/[dd]_{Tk}\ar@/^{0.7pc}/[dd]^{Th}\ar[r]^{\delta_{A}} & T\overline{A}\ar[dd]^{T\overline{h}}\ar@{}[ldd]|-{\quad\cong} &  & TA\ar[dd]_{Tk}\ar[r]^{\delta_{A}} & T\overline{A}\ar@/_{0.7pc}/[dd]_{Tm}\ar@/^{0.7pc}/[dd]^{T\overline{h}}\\
\ltwocell[0]{}{T\lambda} &  & = & \dltwocell[0.3]{r}{\xi} & \ltwocell[0]{}{T\overline{\lambda}}\\
TB\ar[r]_{\delta_{B}} & T\overline{B} &  & TB\ar[r]_{\delta_{B}} & T\overline{B}
}
\]
and thus the left diagram below has the generic factorization
\[
\xymatrix@R=1em{TA\ar@/_{0.7pc}/[dd]_{Tk}\ar@/^{0.7pc}/[dd]^{Th}\ar[r]^{\delta_{A}}\ar@{}[rdd]|-{\quad\quad\cong} & T\overline{A}\ar[r]^{Te_{A}}\ar[dd]|-{T\overline{h}}\ar@{}[rdd]|-{T\cong} & TA\ar[dd]^{Th} &  & TA\ar[dd]_{Tk}\ar[r]^{\delta_{A}} & T\overline{A}\ar@/_{0.7pc}/[dd]_{Tm}\ar@/^{0.7pc}/[dd]^{T\overline{h}}\ar[r]^{Te_{A}}\ar@{}[rdd]|-{\quad\quad T\cong} & TA\ar[dd]^{Th}\\
\ltwocell[0]{}{T\lambda} &  &  & = & \dltwocell[0.3]{r}{\xi} & \ltwocell[0]{}{T\overline{\lambda}}\\
TB\ar[r]_{\delta_{B}} & T\overline{B}\ar[r]_{Te_{B}} & TB &  & TB\ar[r]_{\delta_{B}} & T\overline{B}\ar[r]_{Te_{B}} & TB
}
\]
But this is also the generic factorization of the diagram 
\[
\xymatrix@R=1em{TA\ar[r]^{\delta_{A}}\ar[dd]_{Tk}\ar@{}[rdd]|-{\cong} & T\overline{A}\ar[r]^{Te_{A}}\ar[dd]|-{T\overline{k}}\ar@{}[rdd]|-{T\cong\quad\;\;} & TA\ar@/_{0.7pc}/[dd]_{Tk}\ar@/^{0.7pc}/[dd]^{Th}\\
 &  & \ltwocell[0]{}{T\lambda}\\
TB\ar[r]_{\delta_{B}} & T\overline{B}\ar[r]_{Te_{B}} & TB
}
\]
which has already been factored. By uniqueness of representative generic
factorizations we have $\left(m,\xi\right)=\left(\overline{k},\cong\right)$
as required.

Now, we have the pseudofunctor $\mathbf{P}\colon\int_{\textnormal{lax}}^{X\in\mathscr{B}}\mathfrak{M}_{\left(-\right)}\to\mathscr{A}$,
and will sketch why $\mathbf{P}$ is a left lax $\mathsf{F}$-adjoint
to $G$. To do this, we take our universal 1-cell $\eta_{\left(X,\delta\right)}\colon\left(X,\delta\right)\to GF\left(X,\delta\right)$
at an object $\left(X,\delta\colon X\to LA\right)$ to be the pair
$\left(u_{A},\gamma\right)$ as below.
\[
\xymatrix@=1.5em{X\ar[rr]^{\delta}\ar[dd]_{\delta} &  & TA\ar[ddrr]^{T1_{A}}\ar[dd]_{Tu_{A}}\dltwocell[0.5]{ldld}{\gamma}\\
 &  &  & \dltwocell[0.4]{ld}{T\nu}\\
TA\ar[rr]_{\delta_{A}} &  & T\overline{A}\ar[rr]_{Te_{A}} &  & TA
}
\]
Moreover, for a given 1-cell $\left(f,h,\alpha\right)\colon\left(X,\delta\right)\to GC$
as on the left below, we have
\[
\xymatrix@=1.5em{X\ar[dd]_{f}\ar[rr]^{\delta} &  & TA\ar[dd]|-{Th}\dltwocell[0.5]{ldld}{\alpha}\ar@/^{3.5pc}/@{..>}[dd]|-{T\left(\overline{e_{c}h}u_{A}\right)} &  &  &  &  &  & X\ar[rr]^{\delta}\ar[d]|-{\delta}\ar@/_{3.5pc}/[dd]_{f} &  & TA\ar[d]^{Tu_{A}}\dltwocell[0.5]{ldl}{\gamma}\\
 &  & \ltwocell[0.5]{r}{T\xi} &  &  & = &  &  & TA\ar[rr]_{\delta_{A}}\ar[d]|-{T\left(e_{C}h\right)}\ltwocell[0.8]{l}{Te_{C}\alpha} &  & T\overline{A}\ar[d]^{T\left(\overline{e_{C}h}\right)}\ar@{}[lld]|-{\cong}\\
TC\ar[rr]_{\delta_{c}} &  & T\overline{C} &  &  &  &  &  & TC\ar[rr]_{\delta_{C}} &  & T\overline{C}
}
\]
where $\xi$ is the unique map induced from the fact that the RHS
whiskered by $Te_{C}$ is $Te_{C}\cdot\alpha$. This defines the universal
2-cell
\[
\xymatrix{\left(X,\delta\right)\myar{\left(f,h,\alpha\right)}{rr}\ar[rd]_{\eta_{\left(X,\delta\right)}} & \; & GC\\
 & GA\ar[ur]_{Ge_{c}h}\utwocell[0.7]{u}{Te_{c}\cdot\alpha}
}
\]
where we have a bijection $\beta\mapsto\widetilde{\beta}$ as below
\[
\xymatrix@R=0.6em{\left(X,\delta\right)\myar{\left(f,h,\alpha\right)}{rr}\ar[rdd]_{\eta_{\left(X,\delta\right)}} & \; & GC &  & \left(X,\delta\right)\myar{\left(f,h,\alpha\right)}{rr}\ar[rdd]_{\eta_{\left(X,\delta\right)}} & \; & GC\\
 &  &  & = & \; &  & \utwocell[0.7]{llu}{Te_{C}\cdot\alpha}\\
 & GA\ar[uru]_{G\ell}\utwocell[0.6]{uu}{\beta} &  &  &  & GA\ar[uur]^{Ge_{C}h}\ar@/_{1.3pc}/[uur]_{G\ell} & \ultwocell[0.4]{uul}{G\widetilde{\beta}}
}
\]
or equivalently, a bijection
\[
\xymatrix@=1.5em{X\myar{f}{rr}\ar[dd]_{\delta} &  & TC\ar[dd]^{T\textnormal{id}} &  & X\myar{f}{rr}\ar[dd]_{\delta} &  & TC\ar[dd]^{T\textnormal{id}}\\
 &  &  & = &  & \utwocell[0.5]{ul}{Te_{C}\cdot\alpha}\\
TA\myard{T\ell}{rr}\utwocell[0.5]{urur}{\beta} &  & TC &  & TA\myard{T\ell}{rr}\ar[urur]|-{Te_{c}h} &  & TC\utwocell[0.6]{ul}{T\widetilde{\beta}}
}
\]
as genericity of $\left(h,\alpha\right)$ is respected by composition
with $Te_{C}$. The verification that this bijection satisfies the
required axioms (with the tight maps being defined as above) is left
for the reader.
\end{proof}
Finally, the following provides what is perhaps a more natural definition
of parametric right adjoint (local right adjoint) pseudofunctors,
obtained by applying the above theorem in the setting where $\mathscr{A}$
has a terminal object. In more detail, this is obtained by noting
the reduced form of the spectrum in the presence of a terminal object,
namely $\mathbf{Spec}_{T}\left(X\right)=\mathscr{B}\left(X,T\mathbf{1}\right),$
and applying the spectrum factorization.
\begin{cor}
[Parametric right adjoints] Suppose $\mathscr{A}$ is a bicategory
with a terminal object. Then a pseudofunctor $T\colon\mathscr{A}\to\mathscr{B}$
is lax familial if and only if the canonical projection on the oplax
slice,\footnote{In our convention a morphism $f\nrightarrow g$ in the oplax slice
$\mathscr{B}\sslash T1$ is a morphism $m\colon\textnormal{dom}f\rightarrow\textnormal{dom}g$
and a 2-cell $\alpha\colon f\Rightarrow gm$ in $\mathscr{B}$. The
tight morphisms are those for which $\alpha$ is invertible.} where the underlying pseudo slice defines the tight maps,\footnote{In the case of $\mathscr{A}\sslash1$, the underlying pseudo-slice
coincides with the lax slice. Thus, just as in Theorem \ref{specfacthm},
every morphism in the domain category is tight.}
\[
T_{1}\colon\mathscr{A}\sslash1\to\mathscr{B}\sslash T1
\]
has a left lax $\mathsf{F}$-adjoint. 
\end{cor}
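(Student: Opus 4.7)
The plan is to deduce this corollary as a direct specialization of Theorem \ref{specfacthm}, by computing the spectrum explicitly when $\mathscr{A}$ has a terminal object $\mathbf{1}$. First I would recall from the remark after the definition of the spectrum that in this case $\mathbf{Spec}_T\colon\mathscr{B}^{\mathrm{op}}\to\mathbf{Cat}$ identifies with the representable $\mathscr{B}(-,T\mathbf{1})$: given a lax-generic $\delta\colon X\to TA$, post-composing with $T!_A$ produces a 1-cell $X\to T\mathbf{1}$, and conversely any $y\colon X\to T\mathbf{1}$ is (by the lax-familial hypothesis on the relevant direction) the composite of a lax-generic with a $T!$-image. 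Under this identification, 2-cells between generics correspond bijectively to 2-cells in $\mathscr{B}(X,T\mathbf{1})$.

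Second, I would invoke Lemma \ref{constructspectrum} to identify the Grothendieck construction $\int_{\mathrm{lax}}^{X\in\mathscr{B}}\mathbf{Spec}_T(X)$ with the oplax slice $\mathscr{B}\sslash T\mathbf{1}$, whose morphisms are pairs $(f,\alpha)$ with $\alpha\colon y\Rightarrow y'f$. Under this identification the cartesian morphisms of the projection $V\colon\mathscr{B}\sslash T\mathbf{1}\to\mathscr{B}$ are precisely those with $\alpha$ invertible, that is, the morphisms of the pseudo slice $\mathscr{B}/T\mathbf{1}$, matching the tight-map convention stipulated by the corollary. It is standard that this $V$ is a locally discrete fibration of bicategories.

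Third, I would observe that the factorization $T=V\cdot G$ produced by Theorem \ref{specfacthm} sends $A\in\mathscr{A}$ to $(TA,T!_A)$. Because $\mathbf{1}$ is terminal, every hom-category $\mathscr{A}(A,\mathbf{1})$ is equivalent to the terminal category, so the canonical pseudofunctor $\mathscr{A}\to\mathscr{A}\sslash\mathbf{1}$ sending $A\mapsto(A,!_A)$ is a biequivalence, and in particular the oplax slice coincides with the pseudo slice (so all 1-cells are tight, as in Theorem \ref{specfacthm}). Under this biequivalence, $G$ corresponds to the canonical projection $T_1\colon\mathscr{A}\sslash\mathbf{1}\to\mathscr{B}\sslash T\mathbf{1}$.

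The main point to verify --- and what I would treat as the principal (though routine) obstacle --- is that the tight-map structures transport correctly across these identifications, so that a left lax $\mathsf{F}$-adjoint to $G$ in the sense of Definition \ref{laxFadj} corresponds exactly to a left lax $\mathsf{F}$-adjoint to $T_1$ with the tight-map structure declared in the statement. Once this is checked, both implications fall out of Theorem \ref{specfacthm}: lax familiality of $T$ gives the factorization with $G\simeq T_1$ having a left lax $\mathsf{F}$-adjoint, and conversely any left lax $\mathsf{F}$-adjoint to $T_1$ supplies the data needed by Theorem \ref{specfacthm}(2) for the factorization $T=V\cdot T_1$, yielding lax familiality of $T$.
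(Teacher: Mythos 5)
Your proposal is correct and follows essentially the same route as the paper, which obtains the corollary by noting that in the presence of a terminal object the spectrum reduces to $\mathscr{B}\left(-,T\mathbf{1}\right)$, so that its Grothendieck construction is the oplax slice $\mathscr{B}\sslash T\mathbf{1}$ and the spectrum factorization of Theorem \ref{specfacthm} becomes the projection $T_{1}\colon\mathscr{A}\sslash\mathbf{1}\to\mathscr{B}\sslash T\mathbf{1}$. Your write-up simply makes explicit the transport of tight-map structures and the biequivalence $\mathscr{A}\simeq\mathscr{A}\sslash\mathbf{1}$, which the paper leaves implicit.
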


\begin{rem}
There are of course four variants of the above, concerning the case
when $T/T^{\textnormal{op}}/T^{\textnormal{co}}/T^{\textnormal{coop}}$
is familial.
\end{rem}

\section*{Acknowledgements}{

The author would like to thank Richard Garner, Joachim Kock, and the anonymous referee for their feedback on earlier versions of this paper.
}

\bibliographystyle{siam}
\bibliography{references}

\end{document}